
\documentclass{article}
\usepackage{geometry}               
\geometry{a4paper}
\usepackage[english]{babel}
\usepackage[utf8]{inputenc}
\usepackage{amssymb,amsmath,amsthm,amsfonts}
\usepackage{graphicx}
\usepackage[usenames,dvipsnames]{xcolor}
\usepackage{cancel}
\usepackage[colorlinks]{hyperref}
\usepackage{enumitem}
\usepackage{comment}

\usepackage[sc]{mathpazo}

%
%

\newcommand{\N}{\mathbb{N}}

\newcommand{\R}{\mathbb{R}}


\newcommand{\eps}{\varepsilon}

\newcommand{\loc}{\text{loc}}

\newcommand{\Mcal}{{\mathcal{M}}}

\newcommand{\dive}{\diverg  }

\def\XXint#1#2#3{{\setbox0=\hbox{$#1{#2#3}{\int}$ }
\vcenter{\hbox{$#2#3$ }}\kern-.6\wd0}}


\DeclareMathOperator{\diverg}{div}

\newtheorem{proposition}{Proposition}[section]
\newtheorem{theorem}[proposition]{Theorem}
\newtheorem{corollary}[proposition]{Corollary}
\newtheorem{lemma}[proposition]{Lemma}

\theoremstyle{definition}
\newtheorem{definition}[proposition]{Definition}
\newtheorem{remark}[proposition]{Remark}

\numberwithin{equation}{section}
\newcommand{\beq}{\begin{equation}}
\newcommand{\eeq}{\end{equation}}
\newcommand{\ben}{\begin{enumerate}}
\newcommand{\een}{\end{enumerate}}
\newcommand{\bit}{\begin{itemize}}
\newcommand{\eit}{\end{itemize}}


\title{Normalized solutions for the nonlinear\\ Schr\"{o}dinger equation with potential:\\ the purely Sobolev critical case}

\author{Gianmaria Verzini and Junwei Yu}

\date{\today}

\begin{document}
\maketitle

\begin{abstract}
We study the existence and multiplicity of positive solutions in $H^1(\R^N)$, $N\ge3$, with prescribed $L^2$-norm,  
for the (stationary) nonlinear Schrödinger equation with Sobolev critical power nonlinearity. It is well known that, in the free case, the associated energy functional has a mountain pass geometry on the $L^2$-sphere. This boils down, in higher dimensions, to the existence of a mountain pass solution which is (a  suitable scaling of) the Aubin-Talenti function. In this paper, we consider the same problem, in presence of a weakly attractive, possibly irregular, potential, wondering (i) whether a local minimum solution appears, thus providing an orbitally stable family of solitons, and (ii) if the existence of a mountain-pass solution persists. 
We provide positive answers, depending on suitable assumptions on the potential and on the mass value.

Moreover, by the Hopf-Cole transform, we give some applications of our results to the existence of 
multiple solutions to ergodic Mean Field Games systems with potential and quadratic Hamiltonian.
\end{abstract}
\noindent
{\footnotesize \textbf{AMS-Subject Classification}}.
{\footnotesize 35J20, 35B33, 35Q55, 35Q89, 35J61.}\\
{\footnotesize \textbf{Keywords}}.
{\footnotesize Energy critical Schr\"odinger equations, constrained critical points, solitary waves, ergodic Mean Field Games systems, weakly attractive potential.}

\section{Introduction}

We consider the problem
\begin{equation}\label{eq:main}
\begin{cases}
-\Delta U +V(x)U = \lambda U + |U|^{2^{*}-2}U  & \text{in }\mathbb{R}^{N},\smallskip\\
\displaystyle \int_{\mathbb{R}^{N}} U^2\,dx = \rho^{2},
\end{cases}
\end{equation}
in the unknown $(U,\lambda)\in H^1(\R^N)\times\R$, where $\rho>0$, $N\geq3$, $2^{*}:=\frac{2N}{N-2}$ is 
the Sobolev critical exponent, and $V$ is a weakly attractive potential, in the sense specified below.

Solutions of \eqref{eq:main} correspond to critical points of the energy functional 
$E: H^{1}(\mathbb{R}^{N})\rightarrow \mathbb{R}$, defined by
\begin{equation}\label{func:main}
E(U)=\frac{1}{2}\int_{\mathbb{R}^{N}}|\nabla U|^{2}dx + \frac{1}{2}\int_{\mathbb{R}^{N}}V(x) U^{2}dx-\frac{1}{2^{*}}\int_{\mathbb{R}^{N}}|U|^{2^{*}}dx
\end{equation}
constrained to the $L^{2}-$sphere
\begin{equation}\label{manifold:rho}
\mathcal{M}_{\rho}=\left\{U\in H^{1}(\mathbb{R}^{N}): \int_{\mathbb{R}^{N}} U^{2}=\rho^{2}\right\},
\end{equation}
the number $\lambda$ playing the role of a Lagrange multiplier.

Solutions of this kind, either of \eqref{eq:main} or of more general nonlinear Schr\"odinger 
equations, are usually called \emph{normalized solutions} because of the $L^2$-mass constraint. They 
are of interest for their relevance in the dynamics of the associated evolutive equation. In particular, they 
allow to build standing wave solutions to the evolutive NLS, and the 
orbital stability of such solitons is related with the variational characterization of the solutions: 
as a rule of thumb, families of (either global or local) minimizer of $E$ on $\Mcal_\rho$ should 
correspond to stable sets, while saddle points give rise to unstable solitons. 

For more general equations, the variational properties of the energy functional restricted on 
$\Mcal_\rho$ are strongly influenced by the growth of the nonlinear term in the equation. Let us 
consider for a moment the free, pure power energy
\[
E_p(U)=\frac{1}{2}\int_{\mathbb{R}^{N}}|\nabla U|^{2}dx -\frac{1}{p}\int_{\mathbb{R}^{N}}|U|^{p}dx,
\]
with $2<p\le 2^*$. By the Gagliardo-Nirenberg inequality one can see that, in the mass-subcritical case
$2<p<2+4/N$, $E_p$ is bounded below and coercive on $\Mcal_\rho$, 
and it admits global minimizers for every $\rho>0$; on the other hand, in the mass-supercritical case 
$2+4/N<p\le 2^*$, then $E$ has a mountain pass geometry on $\Mcal_\rho$ and it actually admits a mountain 
pass solution, again for every $\rho>0$ (in the Sobolev critical case this latter fact is true only in dimension $N\ge5$, since the Aubin-Talenti functions are not in $L^2(\R^N)$ for $N=3,4$). It is worth mentioning that, as a matter of fact, the typical 
mountain pass geometry allows also to construct bounded local minimizing sequences for the corresponding 
functional. In the case of $E_p$, anyway, such sequences weakly converge to $0$, which is not in 
$\Mcal_\rho$, so the presence of a mountain pass solution is not associated to that of a local minimizer. 
Under this perspective, one can consider perturbations or more general versions of $E_p$, dealing with more 
general nonlinearities, or including some non-constant potential term, trying to generalize the above 
results. From this point of view, the mass-subcritical case has a quite long history, as it was treated 
by Lions as one of the first examples of application of his concentration-compactness technique 
\cite{LionCC2}, and generalized in different directions, until the recent contribution by Ikoma and 
Miyamoto \cite{MR4064338}. Concerning the mass supercritical, Sobolev subcritical case, mountain pass 
solutions were first considered by Jeanjean \cite{MR1430506} and, more recently, also 
orbitally stable local minimizers were detected, in bounded domains \cite{ntvAnPDE}. Subsequently, a 
constantly increasing number of papers deals with this kind of questions, and the topic of normalized 
solutions has become a hot one. 

In the mass supercritical case, and especially in the Sobolev critical one, two questions arise quite 
naturally, for a perturbation of $E_p$: on the one hand, if the perturbation gives rise to a 
local minimum solution, thus providing an orbitally stable solitary wave; on the other hand, if the existence of a 
mountain-pass solution persists (or is created, in lower dimension). After the recent papers by Soave \cite{MR4107073,MR4096725}, 
questions of this kind have become very popular for NLS equations with combined nonlinearities, i.e. with 
energy 
\begin{equation}\label{eq:E_comb_nonl}
E_{p,q}(U)=\frac{1}{2}\int_{\mathbb{R}^{N}}|\nabla U|^{2}dx -\frac{b}{q}\int_{\mathbb{R}^{N}}|U|^{q}dx 
-\frac{1}{p}\int_{\mathbb{R}^{N}}|U|^{p}dx,
\end{equation}
with $b\in\R$ and $2<q<p\le 2^*$. In particular, in case $p=2^*$, $q$ is mass-subcritical and $b>0$, 
Soave obtains in \cite{MR4096725} the existence of a local minimizer under a smallness condition about a 
combination of $\rho$ and $b$, leaving as an open problem the existence of a mountain-pass solution. 
This latter problem has been later solved by Jeanjean and Le \cite{MR4476243} in dimension $N\ge4$ and by 
Wei and Wu \cite{MR4433054} for $N=3$.  Notice that, in the Sobolev critical case $p=2^*$, the lack of 
compactness due to working on $\R^N$ is coupled with that provided by the Sobolev critical term. 
Moreover, the existence of the mountain-pass solution is more elusive, as it requires a description of 
the lack of compactness of Palais-Smale sequences at an higher energy level.

In the present paper we address the two questions mentioned above, in the case of the functional 
$E$ introduced in \eqref{func:main}, with the aim to parallel the results for $E_{2^*,q}$ contained 
in \cite{MR4096725,MR4476243,MR4433054}. The presence of the quadratic potential term 
$\int_{\mathbb{R}^{N}}V(x) U^{2}dx$, instead of the superquadratic perturbation 
$-b\int_{\mathbb{R}^{N}}|U|^{q}dx$, reflects on several different difficulties that we have to face.
We will come to these later (see Remark \ref{rmk:comparisonwithcombined}), after the statement of our assumptions and results.

In this paper, we deal with a potential $V \in L^{\frac{N}{2}}(\R^{N})+L^{\infty}(\R^{N})$. 
More precisely, throughout the whole paper we will always assume that
\begin{equation}\label{def_pot}
\begin{aligned}
  &V(x)=V_{1}(x)+V_{2}(x), \quad &&W(x):=V(x)|x|=W_{1}(x)+W_{2}(x)\\
  \text{with }\quad&V_{1}\in L^{\frac{N}{2}}(\R^{N}), &&W_{1}\in L^{N}(\R^{N})\\
  \text{and }\quad&V_{2},W_2 \in L^{\infty}(\R^{N}), &&|V_{2}(x)|+|W_{2}(x)|\rightarrow0,  \mbox{ as }  |x|\rightarrow\infty.
\end{aligned}
\end{equation}
Moreover, we always assume that $V$ is weakly attractive, in the sense that 
 \begin{equation}\label{Assump_Neg}
 \tilde{\lambda}_{1}:=\inf\left\{\int_{\mathbb{R}^{N}}(|\nabla \varphi|^{2}+V(x) \varphi^{2})dx: \varphi\in \Mcal_1
 \right\}< 0
  \end{equation}
(with $\Mcal_1$ defined as in \eqref{manifold:rho}). In particular, the negative part $V^-$ of the potential has to be large enough, in a suitable sense.
\begin{remark}\label{rem:LL}
It is well known that, by assumptions \eqref{def_pot}, \eqref{Assump_Neg}, there exist $\tilde{\lambda}_{1}<0$ and 
$\psi_{1}\in \Mcal_1$ with $\psi_{1}\geq 0$ in $\R^{N}$, such that
  \begin{equation}\label{eq:nomain2}
    -\Delta \psi_{1}+V\psi_{1}= \tilde{\lambda}_{1}\psi_{1}\qquad\text{ in }H^1(\R^N)
      \end{equation}
(see e.g. \cite[Section 11.5]{LiebLoss}).
\end{remark}

To state our results we need the following definition.
\begin{definition}\label{def:GS}
We say that $U_{0}$ is a  normalized ground state of \eqref{eq:main} if $U_{0}$ is a critical point of $E$ constrained to $\mathcal{M}_\rho$, i.e. it solves \eqref{eq:main}, and
\begin{equation*}
E(U_{0})= \inf\{E(U): U \in \mathcal{M}_\rho, \ \nabla_{\mathcal M_\rho}E(U)=0\}.
\end{equation*}
\end{definition}

First, we state our main results concerning local minimizers and ground states.
\begin{theorem}\label{Thm_neg}
Let $N\ge3$ and assumptions \eqref{def_pot}, \eqref{Assump_Neg} hold true. 
There exists a positive constant $C_{0}$ such that, if
  \begin{equation}\label{AS_Positive}
    \max\left\{\|V^{-}_{1}\|_{\frac{N}{2}},\rho^{2} \|V_{2}^{-}\|_{\infty}\right\}<C_{0},
  \end{equation}
then \eqref{eq:main} has a solution, which corresponds to a local minimizer of $E$ on $\Mcal_{\rho}$ 
with negative energy, negative multiplier, and that can be chosen to be strictly positive almost 
everywhere in $\R^N$. 
  
If in addition    
 \begin{equation}\label{AS_N4GS1}
    \max\left\{\|\left [(N-4)V_{1}\right]^{+}\|_{\frac{N}{2}}, \|W_{1}\|_{N}\right\}<C_{1},
  \end{equation} 
    \begin{equation}\label{AS_N4GS2}
    \max\left \{\rho^{2}\| \left [(N-4)V_{2}\right]^{+} \|_{\infty},\rho \|W_{2}\|_{\infty}\right\}<C_{2},
  \end{equation}  
for appropriate positive constants $C_{1},C_{2}$, then the above local minimizer is a ground state.
\end{theorem}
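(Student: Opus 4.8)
Let $\mathcal{S}$ be the best Sobolev constant ($\mathcal{S}\|u\|_{2^{*}}^2\le\|\nabla u\|_2^2$) and let $R=R(N)>0$ be the point of maximum of $t\mapsto \tfrac14 t^2-\tfrac1{2^{*}}\mathcal{S}^{-2^{*}/2}t^{2^{*}}$ (which is strictly positive). Put $A_R:=\{U\in\mathcal{M}_\rho:\|\nabla U\|_2\le R\}$ and $m(\rho):=\inf_{A_R}E$. From $\int_{\R^N}VU^2\ge -\mathcal{S}^{-1}\|V_1^-\|_{N/2}\|\nabla U\|_2^2-\rho^2\|V_2^-\|_\infty$ (Hölder for $V_1^-$, boundedness of $V_2^-$) together with Sobolev one gets, for $C_0=C_0(N)$ small in \eqref{AS_Positive}, that $E(U)\ge \tfrac14\|\nabla U\|_2^2-\tfrac1{2^{*}}\mathcal{S}^{-2^{*}/2}\|\nabla U\|_2^{2^{*}}-\tfrac12\rho^2\|V_2^-\|_\infty$ on $\mathcal{M}_\rho$; in particular $E\ge\delta_0>0$ on $\{\|\nabla U\|_2=R\}$, a dimensional barrier. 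On the other hand, testing \eqref{eq:nomain2} with $\psi_1$ gives $\|\nabla\psi_1\|_2^2<2\|V_2^-\|_\infty$, so $\rho\psi_1\in A_R$ and $m(\rho)\le E(\rho\psi_1)=\tfrac12\tilde\lambda_1\rho^2-\tfrac1{2^{*}}\rho^{2^{*}}\|\psi_1\|_{2^{*}}^{2^{*}}<0$. Hence $m(\rho)<0<\delta_0$, and the barrier keeps any minimizing sequence $(U_n)\subset A_R$ (taken $\ge0$, replacing $U_n$ by $|U_n|$) uniformly away from $\{\|\nabla U\|_2=R\}$, so $(U_n)$ is bounded in $H^1$ and every weak limit is interior to $A_R$.

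\textbf{Compactness, sign of $\lambda$, positivity.} For $U_n\rightharpoonup U$ one has $\int_{\R^N}VU_n^2\to\int_{\R^N}VU^2$, since $V_1\in L^{N/2}$ and $V_2\in L^\infty$ vanishes at infinity. As $E(U_n)\ge\tfrac12\int_{\R^N}VU_n^2$ and $E(U_n)\to m(\rho)<0$, this forces $U\neq 0$ (no vanishing). A Brézis–Lieb splitting gives $E(U_n)=E(U)+E_0(U_n-U)+o(1)$, where $E_0$ is the potential-free functional, which is nonnegative on $\{\|\nabla v\|_2\le R\}$; thus $E(U)\le m(\rho)$, and after excluding loss of mass one concludes $U_n\to U$ in $H^1$, $U\in\mathcal{M}_\rho$, $E(U)=m(\rho)$. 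Call this minimizer $U_0$; being interior to $A_R$ it solves \eqref{eq:main}, and testing with $U_0$ against $E(U_0)<0$ yields $\lambda\rho^2<-\tfrac2N\|U_0\|_{2^{*}}^{2^{*}}<0$. A Brézis–Kato bootstrap then puts $U_0\in L^q$ for all $q<\infty$, and the Harnack inequality for Schrödinger operators with $L^{N/2}$ potentials upgrades $U_0\ge 0,\ U_0\not\equiv0$ to $U_0>0$ a.e.

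\textbf{Part 2 (the minimizer is a ground state).} Let $U\in\mathcal{M}_\rho$ be \emph{any} solution of \eqref{eq:main}. Testing with $U$ gives $\|\nabla U\|_2^2+\int_{\R^N}VU^2=\lambda\rho^2+\|U\|_{2^{*}}^{2^{*}}$, while the Pohozaev identity obtained by testing with $x\cdot\nabla U$ produces a second relation; there the potential contributes $\int_{\R^N}VU\,(x\cdot\nabla U)\,dx=\int_{\R^N}W\,U\,\tfrac{x}{|x|}\cdot\nabla U\,dx$, absolutely convergent precisely because $W=V|x|\in L^N+L^\infty$ by \eqref{def_pot} (the identity being justified by a standard truncation). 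Eliminating $\lambda$ and $\|U\|_{2^{*}}^{2^{*}}$ from the two relations yields
\[
E(U)=\frac1N\|\nabla U\|_2^2+\frac{4-N}{4}\int_{\R^N}VU^2-\frac{N-2}{2N}\int_{\R^N}VU\,(x\cdot\nabla U)\,dx .
\]
Now $\tfrac{4-N}{4}\int VU^2\ge-\tfrac14\int[(N-4)V]^+U^2\ge-\tfrac14\big(\mathcal{S}^{-1}\|[(N-4)V_1]^+\|_{N/2}\|\nabla U\|_2^2+\rho^2\|[(N-4)V_2]^+\|_\infty\big)$ and $\big|\int VU\,(x\cdot\nabla U)\big|\le\int|W||U||\nabla U|\le \mathcal{S}^{-1/2}\|W_1\|_N\|\nabla U\|_2^2+\rho\|W_2\|_\infty\|\nabla U\|_2$. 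If $\|\nabla U\|_2<R$ then $U\in A_R$, so $E(U)\ge m(\rho)=E(U_0)$ trivially. If $\|\nabla U\|_2\ge R$, absorbing $\|\nabla U\|_2\le\tfrac1R\|\nabla U\|_2^2$ and using \eqref{AS_N4GS1}–\eqref{AS_N4GS2} with dimensional $C_1,C_2$ small, the above bounds give $E(U)\ge\tfrac1{2N}\|\nabla U\|_2^2-\tfrac14\rho^2\|[(N-4)V_2]^+\|_\infty\ge\tfrac{R^2}{2N}-\tfrac{C_2}{4}>0>m(\rho)=E(U_0)$. In all cases $E(U)\ge E(U_0)$, and since $U_0$ itself is a solution realizing $E(U_0)=m(\rho)$, it is a normalized ground state.

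\textbf{Expected obstacles.} In Part 1 the delicate point is excluding dichotomy in the minimizing sequence: one needs strict monotonicity of $\rho\mapsto m(\rho)$, whose proof rests on the attractiveness hypothesis \eqref{Assump_Neg} (through the rescaled eigenfunction $\rho\psi_1$) and on the negativity of the Lagrange multiplier. In Part 2 the technical crux is the rigorous derivation of the Pohozaev/energy identity for merely $H^1$ solutions with an irregular potential: this is exactly what the assumption $W=V|x|\in L^N+L^\infty$ is designed for, since it makes the critical integral $\int_{\R^N}W\,U\,\tfrac{x}{|x|}\cdot\nabla U$ converge and stable under regularization of $V$; the remainder of Part 2 is bookkeeping with the dimensional constants $C_0,C_1,C_2$.
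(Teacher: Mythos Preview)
Your approach is essentially correct and close to the paper's, with a few organizational differences worth noting. The paper rescales to the unit sphere $\mathcal{M}_1$ with parameter $\mu=\rho^{2^*-2}$, upgrades the minimizing sequence to a Palais--Smale sequence via Ekeland, and then applies a structural splitting lemma that produces the quantitative gap $\tfrac1N S^{N/2}\mu^{1-N/2}$; you stay on $\mathcal{M}_\rho$ and use a direct Br\'ezis--Lieb argument. Both routes reduce the ``loss of mass'' issue to the same elementary scaling step: from $E(U)\le m(\rho)<0$ and $a:=\|U\|_2<\rho$ one gets $E((\rho/a)U)\le(\rho/a)^2E(U)<m(\rho)$, while the barrier (which holds equally for $\|\cdot\|_2\le\rho$) forces $(\rho/a)U\in A_R$, a contradiction. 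This is exactly what the paper does at the end of its local-minimizer lemma, so your ``strict monotonicity of $\rho\mapsto m(\rho)$'' framing in \emph{Expected obstacles} is stronger than necessary.

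Two minor corrections. The line ``$E(U_n)\ge\tfrac12\int VU_n^2$'' is false as stated; what you want is: if $U_n\rightharpoonup0$ then $\int VU_n^2\to0$, hence $E(U_n)=E_0(U_n)+o(1)\ge o(1)$ on $\{\|\nabla\cdot\|_2\le R\}$, contradicting $m(\rho)<0$. Second, your derivation of $\lambda<0$ directly from $E(U_0)<0$ and the Nehari identity is valid and simpler than the paper's: the paper instead compares with the principal eigenpair $(\tilde\lambda_1,\psi_1)$ to obtain $\lambda<\tilde\lambda_1$ for \emph{every} nonnegative solution, a stronger conclusion it needs later for the mountain-pass solution (whose energy is positive).

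Part~2 matches the paper exactly in substance: the same Pohozaev-based identity $E(U)=\tfrac1N\|\nabla U\|_2^2+\tfrac{4-N}{4}\int VU^2-\tfrac{N-2}{2N}\int VU(x\cdot\nabla U)$ and the same estimates via \eqref{AS_N4GS1}--\eqref{AS_N4GS2}. You phrase it as $\|\nabla U\|_2\ge R\Rightarrow E(U)>0$, while the paper states the contrapositive $E(U)<0\Rightarrow\|\nabla u\|_2<\bar R$; these are equivalent for the ground-state conclusion.
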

\begin{corollary}\label{coro:coro1}
Let $N\ge3$ and assumptions \eqref{def_pot}, \eqref{Assump_Neg} hold true. 
There exists a positive constant $\rho^*$ such that, if $0<\rho<\rho^*$ then \eqref{eq:main} has a 
solution, which corresponds to a local minimizer of $E$ on $\Mcal_{\rho}$ 
with negative energy, negative multiplier, and that can be chosen to be strictly positive almost 
everywhere in $\R^N$. 

Moreover, for a possibly smaller threshold $\rho^*>0$, such local minimizer is a ground state.
\end{corollary}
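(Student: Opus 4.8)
\emph{Proposal.} The plan is to deduce Corollary \ref{coro:coro1} from Theorem \ref{Thm_neg} by exploiting the freedom in the splitting \eqref{def_pot}. Observe that the smallness requirements \eqref{AS_Positive}, \eqref{AS_N4GS1}, \eqref{AS_N4GS2} are of two types: those on $\|V_1^-\|_{N/2}$, $\|[(N-4)V_1]^+\|_{N/2}$, $\|W_1\|_{N}$ do not involve $\rho$, whereas those on $\rho^2\|V_2^-\|_\infty$, $\rho^2\|[(N-4)V_2]^+\|_\infty$, $\rho\|W_2\|_\infty$ carry a positive power of $\rho$ in front of a finite constant. Moreover, the constants $C_0,C_1,C_2$ produced by Theorem \ref{Thm_neg} depend only on $N$ (through the Sobolev constant), \emph{not} on the particular decomposition $V=V_1+V_2$, $W=W_1+W_2$ chosen in \eqref{def_pot}; and the weak attractivity condition \eqref{Assump_Neg}, i.e. $\tilde\lambda_1<0$, is a property of $V$ itself and is insensitive to how $V$ is split. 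So the strategy is: first re-split $V$ and $W$ so that the $\rho$-independent conditions become trivially satisfied, then choose $\rho^*$ small to kill the remaining $\rho$-weighted conditions.

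Concretely, I would fix $\delta:=\tfrac12\min\{C_0,C_1,C_2\}/\max\{1,|N-4|\}$. Since $C_c^\infty(\R^N)$ is dense in $L^{N/2}(\R^N)$ and in $L^{N}(\R^N)$, pick $\phi,\psi\in C_c^\infty(\R^N)$ with $\|V_1-\phi\|_{N/2}<\delta$ and $\|W_1-\psi\|_{N}<\delta$, and set
\[
\hat V_1:=V_1-\phi,\quad \hat V_2:=V_2+\phi,\qquad \hat W_1:=W_1-\psi,\quad \hat W_2:=W_2+\psi .
\]
Then $\hat V_1\in L^{N/2}$, $\hat W_1\in L^N$ with $\|\hat V_1\|_{N/2},\|\hat W_1\|_{N}<\delta$, while $\hat V_2,\hat W_2\in L^\infty(\R^N)$ (with possibly large, but finite, $L^\infty$ norms) and $|\hat V_2(x)|+|\hat W_2(x)|\to0$ as $|x|\to\infty$, because $\phi,\psi$ have compact support and $V_2,W_2$ vanish at infinity. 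Thus $(\hat V_i,\hat W_i)$ is an admissible splitting in the sense of \eqref{def_pot}, and by the choice of $\delta$ the conditions $\|\hat V_1^-\|_{N/2}<C_0$, $\|[(N-4)\hat V_1]^+\|_{N/2}<C_1$, $\|\hat W_1\|_N<C_1$ hold.

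Finally, with $\hat V_2,\hat W_2$ now fixed, set
\[
\rho^*:=\min\Bigl\{\bigl(C_0/\|\hat V_2^-\|_\infty\bigr)^{1/2},\ \bigl(C_2/\|[(N-4)\hat V_2]^+\|_\infty\bigr)^{1/2},\ C_2/\|\hat W_2\|_\infty\Bigr\},
\]
with the convention that a term is omitted (set to $+\infty$) whenever the corresponding $L^\infty$ norm vanishes. For $0<\rho<\rho^*$ all of \eqref{AS_Positive}, \eqref{AS_N4GS1}, \eqref{AS_N4GS2} are satisfied for the splitting $(\hat V_i,\hat W_i)$, and Theorem \ref{Thm_neg} yields the claimed local minimizer, which is in fact a ground state. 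For the first assertion of the corollary one only needs \eqref{AS_Positive}, so the threshold $\bigl(C_0/\|\hat V_1,\hat V_2\|\bigr)$ obtained using only $\delta<C_0$ is larger; the ground-state conclusion forces the smaller threshold above, which accounts for the phrase ``for a possibly smaller threshold''. The only point requiring care is the claim, which I would make precise by inspecting the proof of Theorem \ref{Thm_neg}, that $C_0,C_1,C_2$ are genuinely universal constants and in particular independent both of $\rho$ and of the chosen decomposition of $V$; this is what makes the two-step argument (re-split, then shrink $\rho$) legitimate.
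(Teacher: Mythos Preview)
Your argument is correct and follows essentially the same route as the paper: re-split $V$ and $W$ so that the $L^{N/2}$ and $L^N$ parts become small (shifting the excess into the $L^\infty$ parts, which stay bounded and vanish at infinity), then take $\rho$ small to handle the $\rho$-weighted conditions; the paper carries out the re-splitting via Lemma~\ref{lem:VinLr} applied to $V_1\in L^{N/2}$ and $W_1\in L^N$, whereas you do it directly by approximation with $C_c^\infty$ functions, which is a perfectly valid shortcut here. Your caveat about the universality of $C_0,C_1,C_2$ is well placed and is indeed confirmed by the proofs of Lemmas~\ref{est_inf} and~\ref{Comp_LCGS}.
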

\begin{corollary}\label{coro:coro2}
The results in Corollary \ref{coro:coro1} hold true also replacing assumption \eqref{def_pot} with 
the following one:
\[
V\in L^{r_1}(\R^{N}), \quad W\in L^{r_2}(\R^{N}),\qquad \text{with $\frac{N}{2}<r_1<+\infty$, $N<r_2<+\infty$}.
\]
\end{corollary}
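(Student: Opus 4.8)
The plan is to derive Corollary \ref{coro:coro2} from Corollary \ref{coro:coro1} by showing that the new integrability hypothesis on $V$ and $W$ implies the decomposition \eqref{def_pot}, and that the smallness conditions \eqref{AS_Positive}, \eqref{AS_N4GS1}, \eqref{AS_N4GS2} are automatically satisfied once $\rho$ is taken small enough. The point is that Corollary \ref{coro:coro1} itself is already a clean ``small mass'' statement extracted from Theorem \ref{Thm_neg}, so the only thing to verify is that the abstract structural assumption on $V$ can be replaced by the more concrete $L^{r}$ one at the price of a harmless truncation.

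First I would perform the truncation: given $V\in L^{r_1}(\R^N)$ with $\frac N2<r_1<\infty$, for any level $M>0$ write $V=V_1^{(M)}+V_2^{(M)}$ with $V_1^{(M)}:=V\,\ind{\{|V|>M\}}$ and $V_2^{(M)}:=V\,\ind{\{|V|\le M\}}$. Then $V_2^{(M)}\in L^\infty(\R^N)$ with $\|V_2^{(M)}\|_\infty\le M$, while by dominated convergence $\|V_1^{(M)}\|_{N/2}\le \|V_1^{(M)}\|_{r_1}^{\theta}\|\cdot\|$-type interpolation — more simply, since $V\in L^{r_1}$ and $r_1>N/2$, one has $V\in L^{N/2}$ on the set $\{|V|>M\}$ with $\|V\ind{\{|V|>M\}}\|_{N/2}\to 0$ as $M\to\infty$ (because $|V|^{N/2}\ind{\{|V|>M\}}\le M^{N/2-r_1}|V|^{r_1}\to 0$ pointwise and is dominated by $|V|^{N/2}\in L^1$ on $\{|V|\le M_0\}\cup\{|V|>M_0\}$ — in fact $|V|^{N/2}\ind{\{|V|>M\}}\le M^{\,N/2-r_1}|V|^{r_1}$ globally, which already gives the decay without invoking dominated convergence). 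The same argument applied to $W=V|x|\in L^{r_2}(\R^N)$ with $r_2>N$ yields $W=W_1^{(M)}+W_2^{(M)}$ with $W_1^{(M)}\in L^N$, $\|W_1^{(M)}\|_N\to0$, and $\|W_2^{(M)}\|_\infty\le M$. I also need the decay $|V_2^{(M)}(x)|+|W_2^{(M)}(x)|\to 0$ as $|x|\to\infty$: this follows because $V_2^{(M)}$ and $W_2^{(M)}$ are bounded and lie in $L^{r_1}$, $L^{r_2}$ respectively (being dominated by $V$, $W$), and a bounded $L^r$ function need not decay pointwise in general — so instead I would argue that, having fixed $M$, one can enlarge the ``tail'' part: set $V_1:=V\ind{\{|V|>M\}}+V\ind{\{|x|>R\}}$ and $V_2:=V\ind{\{|V|\le M,\ |x|\le R\}}$, and similarly for $W$; now $V_2$ has compact support, hence trivially satisfies the decay, and is bounded by $M$, while $V_1\in L^{N/2}$ with norm controlled by $\|V\ind{\{|V|>M\}}\|_{N/2}+\|V\ind{\{|x|>R\}}\|_{N/2}$, the first term small by the choice of $M$ and the second finite (since $V\in L^{r_1}\cap L^{N/2}_{\mathrm{loc}}$... in fact $V\ind{\{|x|>R\}}\in L^{N/2}$ since $|V|^{N/2}\le M^{N/2-r_1}|V|^{r_1}$ on $\{|V|>M\}$ and is bounded on $\{|V|\le M\}$, so $V\in L^{N/2}(\R^N)$ outright, and then $\|V\ind{\{|x|>R\}}\|_{N/2}\to 0$ as $R\to\infty$). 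Thus \eqref{def_pot} holds with $V_1, W_1$ of arbitrarily small $L^{N/2}$-, $L^N$-norm and $V_2, W_2$ compactly supported (in particular satisfying the required decay).

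Next, with $\tilde\lambda_1<0$ guaranteed by the hypothesis of Corollary \ref{coro:coro1} — note that \eqref{Assump_Neg} is assumed directly in the statement of Corollary \ref{coro:coro1}, so it is part of the hypotheses here as well and nothing needs to be shown about it — I simply invoke Corollary \ref{coro:coro1}. Indeed, given the constants $C_0, C_1, C_2$ of Theorem \ref{Thm_neg}, I first choose $M$ and $R$ so that $\|V_1\|_{N/2}$, $\|W_1\|_N$, $\|[(N-4)V_1]^+\|_{N/2}$ are all below $\min\{C_0,C_1\}$ (possible since they are all bounded by a constant times $\|V_1\|_{N/2}+\|W_1\|_N$, which we made arbitrarily small); the resulting $V_2, W_2$ are then fixed $L^\infty$ functions, so the remaining conditions $\rho^2\|V_2^-\|_\infty<C_0$, $\rho^2\|[(N-4)V_2]^+\|_\infty<C_2$, $\rho\|W_2\|_\infty<C_2$ hold for all $\rho<\rho^*$ with a suitable $\rho^*>0$. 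This reproduces exactly the hypotheses of Theorem \ref{Thm_neg} and hence, a fortiori, the conclusion of Corollary \ref{coro:coro1}: for $0<\rho<\rho^*$ there is a positive local minimizer of $E$ on $\Mcal_\rho$ with negative energy and negative multiplier, which is a ground state for $\rho$ below a possibly smaller threshold.

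The only genuinely delicate point is the one I flagged above: making sure the bounded part of the decomposition satisfies the decay $|V_2(x)|+|W_2(x)|\to 0$ as $|x|\to\infty$ required by \eqref{def_pot}. A naive truncation-by-value splitting does not give this, which is why I route the $|x|>R$ tail into $V_1$ (resp.\ $W_1$); since $V\in L^{N/2}(\R^N)$ and $W\in L^N(\R^N)$ (both consequences of the $L^{r}$ hypotheses, $r>N/2$ resp.\ $r>N$, via the elementary pointwise bound on the super-level sets), these tails have small norm for $R$ large and get absorbed harmlessly. Everything else is bookkeeping with the constants from Theorem \ref{Thm_neg}. I would write this up in a short paragraph, emphasizing that the $L^r$ condition is strictly stronger near the ``critical'' exponents $N/2$ and $N$ only in that it forces global integrability, which is exactly what powers the smallness of the truncation remainder.
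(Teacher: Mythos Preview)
Your approach has a genuine gap at the step where you claim $V\in L^{N/2}(\R^N)$ ``outright''. The pointwise bound $|V|^{N/2}\le M^{N/2-r_1}|V|^{r_1}$ is valid only on the super-level set $\{|V|>M\}$; on $\{|V|\le M\}$ you only get that $|V|^{N/2}$ is pointwise bounded, which says nothing about integrability since that set typically has infinite measure. In fact $L^{r_1}(\R^N)\not\subset L^{N/2}(\R^N)$ for $r_1>N/2$: the function $|x|^{-\alpha}\chi_{\{|x|>1\}}$ with $N/r_1<\alpha<2$ lies in $L^{r_1}$ but not in $L^{N/2}$. Consequently the tail $V\chi_{\{|x|>R\}}$ need not belong to $L^{N/2}$, your proposed $V_1$ may fail to be in $L^{N/2}$, and the decomposition \eqref{def_pot} is not established. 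The same objection applies verbatim to $W$ and $L^N$.

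The paper circumvents this via a dedicated splitting lemma (Lemma~\ref{lem:VinLr}) that does \emph{not} use a fixed truncation level. It decomposes $\R^N$ into annuli $\Omega_k$ with $\int_{\Omega_k}|V|^{r_1}\le \delta/2^k$, and on each $\Omega_k$ truncates at a height $\eta_k\to 0$ chosen so that $\eta_k^{r_1-N/2}=1/k$. On $\{V\ge\eta_k\}\cap\Omega_k$ one then has $|V|^{N/2}\le\eta_k^{N/2-r_1}|V|^{r_1}=k|V|^{r_1}$, yielding $\|V_1\|_{N/2}^{N/2}\le C\delta+\sum_{k\ge1} k\delta/2^k$, finite and arbitrarily small; meanwhile $V_2\le\eta_k\to 0$ on $\Omega_k$, giving exactly the decay at infinity you correctly flagged as the delicate point. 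The idea your attempt misses is this coupling of the truncation height to the spatial shell, which simultaneously forces $V_1\in L^{N/2}$ with small norm and $V_2(x)\to 0$ as $|x|\to\infty$.
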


Concerning the mountain pass solutions, our main result is the following.
\begin{theorem}\label{thm:mountain_pass}
Let $3\leq N \leq 5$ and suppose that  all the assumptions of either Theorem \ref{Thm_neg}
or one of Corollaries \ref{coro:coro1}, \ref{coro:coro2} are satisfied (with possibly smaller 
constants $C_1,C_2,\rho^*>0$), so 
that \eqref{eq:main} admits a ground state at negative energy, which is also a local minimizer. Assume 
moreover that $V$ is not 
unbounded in every open set, i.e. that
\begin{equation}\label{eq:final_ass}
V \in L^{\infty}(B_{2R}),
\end{equation}  
for some ball $B_{2R}$. Then \eqref{eq:main} has a mountain pass solution at a positive energy level, with negative Lagrange multiplier, which can be chosen to be strictly positive almost everywhere in 
$\R^N$, too.
\end{theorem}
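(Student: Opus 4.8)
The plan is to construct the mountain pass solution via a minimax argument on the $L^2$-sphere $\Mcal_\rho$, taking as one endpoint of the admissible paths the local minimizer $U_0$ from Theorem \ref{Thm_neg} (which sits at negative energy), and as the other endpoint a function obtained by pushing mass far out to infinity, along a profile that is a rescaled Aubin--Talenti bubble translated to a point $x_n\to\infty$. At large spatial scale, the potential term becomes negligible (by \eqref{def_pot}, since $V_2\to 0$ at infinity and $V_1\in L^{N/2}$), so along such a path the energy $E$ behaves like the free critical energy $E_{2^*}$, which on $\Mcal_\rho$ has the usual mountain pass geometry (in dimensions $3\le N\le5$ one must be careful: the Aubin--Talenti functions are not in $L^2$ for $N=3,4$, so the far-away bubble has to be suitably truncated and its $L^2$ mass corrected, a standard but delicate surgery). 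One then defines the minimax level $c_{mp}$ over all continuous paths on $\Mcal_\rho$ joining $U_0$ to this second endpoint, and shows $c_{mp}>\max\{E(U_0), E(\text{endpoint})\}$, i.e. that the mountain pass geometry is strict; the assumption \eqref{eq:final_ass} that $V\in L^\infty(B_{2R})$ is what allows placing a genuine concentrating bubble somewhere and controlling the local contribution of $V$.

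The central analytic difficulty, as the introduction itself flags, is the compactness of Palais--Smale sequences at the positive level $c_{mp}$. The plan is to extract a bounded PS sequence $(U_n)$ at level $c_{mp}$ with $U_n\rightharpoonup U$ in $H^1$; boundedness follows from a fibering/scaling argument (testing against the dilation vector field on $\Mcal_\rho$ gives control of $\|\nabla U_n\|_2$ in the mass-supercritical-type regime created by the critical exponent). The weak limit $U$ solves \eqref{eq:main} for some multiplier $\lambda$, and a Brezis--Lieb decomposition splits the defect of compactness into (i) Aubin--Talenti bubbles carrying energy at least $\frac{1}{N}S^{N/2}$ (the critical Sobolev defect) and (ii) mass escaping to infinity where the problem degenerates to the free one, whose ground-state (mountain pass) level at the relevant residual mass is known. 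The key is an \emph{energy threshold estimate}: one must show $c_{mp}$ lies strictly below the first level at which such a nontrivial splitting can occur, namely below $E(U_0) + \frac{1}{N}S^{N/2}$ and below the free mountain pass level — this is precisely where the smallness of the constants $C_1,C_2,\rho^*$ and the strict negativity of $\tilde\lambda_1$ get used, to gain a definite energy decrement from the attractive potential. Establishing this strict inequality (by a careful test-function computation with a bubble glued onto $U_0$, expanding the potential and cross terms) is the heart of the proof and its main obstacle.

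Once compactness is secured, $U_n\to U$ strongly in $H^1$, so $U\in\Mcal_\rho$, $E(U)=c_{mp}>0$, and $U$ is a nontrivial solution distinct from the local minimizer. It remains to establish the sign of the Lagrange multiplier $\lambda$ and the positivity of $U$. For the multiplier: combining the equation, the Nehari/Pohozaev-type identities satisfied by solutions on $\Mcal_\rho$, and the fact that $E(U)=c_{mp}>0$ while $\|U\|_2^2=\rho^2$ is fixed, one derives $\lambda<0$ (the positive energy forces the kinetic term to dominate in a way that pushes the multiplier negative, exactly as in \cite{MR4476243,MR4433054}); alternatively one tests \eqref{eq:main} against $U$ and against the Pohozaev multiplier and eliminates the critical term. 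For positivity: one runs the whole minimax with paths of nonnegative functions (replacing any $U$ by $|U|$ does not increase $E$ and preserves $\Mcal_\rho$), so the limit $U$ is nonnegative, and then the strong maximum principle — applicable since $V\in L^{N/2}+L^\infty$ and $\lambda<0$, after rewriting the equation with the potential and nonlinear terms as a locally bounded-below zeroth-order coefficient — gives $U>0$ a.e. in $\R^N$. I expect the truncation surgery for $N=3,4$ and the strict energy threshold inequality to be the two places where real work is needed; the rest is by now a fairly standard toolkit for normalized solutions in the Sobolev critical regime.
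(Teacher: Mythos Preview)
Your overall architecture is close to the paper's, but there is one genuine gap and one misidentification of where the dimension restriction enters.

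\textbf{The sign of the Lagrange multiplier.} You propose to get $\lambda<0$ by ``combining the equation, the Nehari/Pohozaev-type identities'' or by testing the equation against $U$ and the Pohozaev multiplier. This does not work here, and the paper says so explicitly (Remark~\ref{rmk:comparisonwithcombined}): the Pohozaev identity \eqref{Pohozaev_mini} gives
\[
\lambda\rho^{2}=\frac{2-N}{2}\int_{\R^{N}}V(x)U^{2}\,dx-\int_{\R^{N}}V(x)U\nabla U\cdot x\,dx,
\]
which has no definite sign because $V$ may change sign and the term $\int V U\nabla U\cdot x$ is uncontrolled. The paper obtains $\lambda<0$ by an entirely different mechanism (Lemma~\ref{lem:max_Princ}): testing the equation against the first eigenfunction $\psi_{1}$ of $-\Delta+V$, which exists with $\tilde\lambda_{1}<0$ by assumption~\eqref{Assump_Neg}, yields $\lambda<\tilde\lambda_{1}<0$ for any nonnegative nontrivial solution. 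This is essential not only at the end but inside the compactness argument itself (to rule out the case $\lambda^{0}\ge0$, $u^{0}\equiv0$ in the splitting Lemma~\ref{cor:split}, via Lemma~\ref{lemma:MPlambdaN}).

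\textbf{The role of $3\le N\le5$.} You attribute it to the Aubin--Talenti functions not lying in $L^{2}$ for $N=3,4$, requiring ``truncation surgery''. Truncation is done in every dimension; the restriction $N\le5$ comes instead from the test-function estimate for the threshold $c_{\mu}<m_{\mu}+\frac{1}{N}S^{N/2}\mu^{1-N/2}$ (Lemma~\ref{upbound_moun}). The leading negative term after gluing the bubble onto $u_{\mu}$ is the cross term $\int u_{\mu}U_{\varepsilon}^{2^{*}-1}\sim\varepsilon^{(N-2)/2}$, while the potential contributes $\int V U_{\varepsilon}^{2}=O(\|U_{\varepsilon}\|_{2}^{2})$ via \eqref{eq:final_ass}. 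One needs $\|U_{\varepsilon}\|_{2}^{2}=o(\varepsilon^{(N-2)/2})$; by \eqref{est:struwe} this holds precisely for $N\le5$ and fails at $N=6$. Your endpoint construction via a bubble translated to infinity is also unnecessarily complicated: the paper uses the $L^{2}$-invariant scaling $u_{h}(x)=h^{N/2}u(hx)$ with $h\to\infty$ to produce the low-energy endpoint, and the bubble is placed in $B_{2R}$ (not at infinity) only for the threshold estimate.

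A minor point: for $V\in L^{N/2}+L^{\infty}$ the classical strong maximum principle is not available; the paper invokes a refined version (Remark~\ref{rmk:regmax}) to obtain positivity a.e.
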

\begin{remark}\label{rmk:no_neg_sol_intro}
Notice that assumption \eqref{Assump_Neg} requires $V^-$ to be big enough. For instance, it is well 
known that a sufficient condition for \eqref{Assump_Neg} to be satisfied is that, for some positive 
$r$, $\eta$,  
\begin{equation}\label{eq:suff_cond_neg}
\inf_{B_r(x_0)}V^-\ge \eta > \frac{N(N-2)}{r^2}
\end{equation}
(see e.g. \cite[Theorem B (i)]{MR4064338}). 
On the other hand, assumptions \eqref{AS_Positive}, \eqref{AS_N4GS1} and \eqref{AS_N4GS2} require 
$V_1$, $W_1$, $\rho^2 V_2$ and $\rho W_2$ to be small, in a suitable sense. Since 
the decomposition in \eqref{def_pot} is not unique, this last property can be obtained for a 
potential $V$ satisfying \eqref{Assump_Neg}, by possibly changing the decomposition, 
taking $V_2$, $W_2$ large and $V_1$, $W_1$ small, and then $\rho$ small enough. Actually, this is 
the content of Corollary \ref{coro:coro1}. For more details on such decomposition, see Lemma \ref{lem:VinLr} below.

In particular, if $V,W\in L^{\infty}(\R^{N})$, both vanishing at infinity, and $V$ satisfies 
\eqref{eq:suff_cond_neg}, the theorems apply, with $V_1\equiv W_1\equiv0$, for $\rho$ sufficiently 
small, depending on the size of the potentials. This kind of behavior was already observed in case of mass supercritical, Sobolev subcritical nonlinearities, both with attractive and with repulsive potential 
 \cite{MR4443784,MR4304693}.

On the other hand, a decomposition with $V=V_1$ and $V_2\equiv W_2 \equiv 0$ could make  
\eqref{AS_Positive}, \eqref{AS_N4GS1} and \eqref{Assump_Neg} incompatible. 
In such a case, a local minimizer may not exist (see Remark \ref{rmk:no_neg_sol} ahead). A mountain pass solution may still exist, though. We refer to \cite[Theorem 1.1]{MR4443784} for a similar result in the Sobolev subcritical case.
\end{remark}
\begin{remark}\label{EX_po}
In  the same spirit of the previous remark, Corollary \ref{coro:coro2} is based on the fact that 
a stronger assumption which implies \eqref{def_pot} is to prescribe 
$V\in L^{r_1}(\R^{N})$ and $W\in L^{r_2}(\R^{N})$, with $\frac{N}{2}<r_1<+\infty$, $N<r_2<+\infty$. 
More precisely, under such assumptions, it is possible to obtain the decompositions in \eqref{def_pot} with $V_1$, $W_1$ as small as 
we want, so that assumptions \eqref{AS_Positive}, \eqref{AS_N4GS1} and \eqref{AS_N4GS2} follow for any such $V$, by just taking $\rho$ small enough. See Lemma \ref{lem:VinLr} below for more details.
\end{remark}
\begin{remark}\label{rmk:regmax}
One of the main aims of this contribution is to deal with low regularity assumptions on the potential 
$V$, which are of interest from the physical viewpoint (see e.g. \cite[Chapter 11]{LiebLoss}), on the line of \cite[Section 1.1]{LionCC2}. In 
particular, no assumptions are required on the (distributional) gradient of $V$. 

As a side difficulty, since $V\in L^{\frac{N}{2}}(\R^{N})+L^{\infty}(\R^{N})$, a solution of \eqref{eq:main} 
needs not to be regular enough to apply the strong maximum principle, or Harnack's inequality. Thus, non-negative solutions are 
not necessarily strictly positive. On the other hand, one can apply refined strong maximum principles, 
such as \cite[Proposition 2.1]{MR3465383}, to show that any non-negative, nontrivial solution of either 
\eqref{eq:main} or \eqref{eq:nomain2} is positive almost everywhere. Moreover, in case $V\in L^r_{\loc}
(\R^N)$, $r>N/2$, then any such a solution is strictly positive everywhere (see e.g. 
\cite[Proposition 1.3]{MR4806286}).
\end{remark}
\begin{remark}\label{rmk:lastone}
In Theorem \ref{thm:mountain_pass} we choose assumption \eqref{eq:final_ass} 
for the sake of readability. Actually, the theorem holds true under a weaker summability assumption, 
depending on the dimension $N$,  see Remark \ref{rmk:endofpaper} at the end of the paper. Also, it is 
worth mentioning that, in such assumption, $R>0$ can be arbitrarily small, as well as the 
$L^\infty(B_{2R})$ norm of $V$ may be arbitrarily large: this will just reflect on the constants $C_1,C_2$, 
not on the qualitative aspects of the result.
\end{remark}
\begin{remark}\label{rmk:comparisonwithcombined}
As we mentioned, our results are somehow analogous to those obtained in \cite{MR4096725,MR4476243,MR4433054} 
for the energy with focusing combined nonlinearities $E_{2^*,q}$, defined in \eqref{eq:E_comb_nonl}; 
in particular, in that context, our assumptions 
\eqref{AS_Positive}, \eqref{AS_N4GS1} and \eqref{AS_N4GS2} are replaced with a smallness condition 
about $b\rho^\alpha$, for a suitable exponent $\alpha>0$. The presence of a non-homogeneous, quadratic 
term, instead of an autonomous, superquadratic one, shows different phenomena, as described in Remarks 
\ref{rmk:no_neg_sol_intro}, \ref{EX_po}. This reflects in some substantial new difficulties to face, in particular 
in three aspects.  

First, in our case the Pohozaev identity \eqref{Pohozaev_mini} gives no information on the sign of 
the Lagrange multiplier $\lambda$. Such information is crucial in deducing the compactness of 
Palais-Smale sequences, either minimizing or mountain-pass ones, and it will be deduced from assumption 
\eqref{Assump_Neg}. 

The second, and more important, difficulty lies in the crucial sharp estimate of the mountain pass level 
(see Lemma \ref{upbound_moun}). In the case of combined nonlinearities, in dimension $N\ge4$, the leading term is provided by the the superquadratic perturbation $-b\int_{\mathbb{R}^{N}}|U|^{q}dx<0$. In our case, from this point of view, the perturbation is much weaker, and indefinite in sign (recall that $V$ may change sign). In some sense, our case is more similar to the (more difficult) case $N=3$ there, where the leading term 
is due to the Sobolev critical one, but here 
the terms coming from the potential $V$ may interfere with it. 

Actually, this is the reason for the technical assumption \eqref{eq:final_ass}, and for the dimensional 
restriction in Theorem \ref{thm:mountain_pass}. Indeed, assumption \eqref{eq:final_ass} is enough to 
conclude only in dimension $N\le 5$. The existence of a mountain pass solution for $N\ge6$, with suitable further assumptions on $V$, remains an interesting open problem.

Finally, it is worth mentioning that we do not assume $V$ to be radial, so we can not exploit 
symmetries and compactness for radial solutions. These tools were fundamental both in \cite{MR4096725} and in \cite{MR4433054}.
\end{remark}
\begin{remark}\label{rmk:bdd_dom}
A similar program was developed for the Sobolev critical Schr\"odinger equation on bounded domains, in 
\cite{MR3918087} (local minimizers) and \cite{MR4847285} (mountain pass solutions). In that case, 
the use of dilations is not possible, thus the Pohozaev formula just provides an inequality, which is 
not particularly useful unless the domain is star-shaped. Nonetheless, we have existence 
and multiplicity results also without such assumption, although for a set of masses which is not necessarily 
an interval.
\end{remark}

To conclude, we use the Hopf-Cole transform \cite{MR2269875} to apply our main results to the existence 
and multiplicity of solutions to ergodic Mean Field Games systems with potential and quadratic Hamiltonian. For the sake of 
simplicity we treat a particular case, although all our results can be translated in a similar fashion.
\begin{theorem}\label{thm:MFG}
Let us assume that  $V\in L^{r_1}(\R^{N})$ and $W\in L^{r_2}(\R^{N})$, with $\frac{N}{2}<r_1<+\infty$, $N<r_2<+\infty$, satisfy \eqref{Assump_Neg}. Then there exists $\alpha^*>0$ such that, for every $0<\alpha<\alpha^*$, the ergodic Mean Field Game system
\begin{equation}
\label{e:sys}
\begin{cases}
-\Delta u +\frac12|\nabla u |^2+ \lambda = -\alpha m^{\frac{2}{N-2}} + 2V(x) \qquad & \text{in} \ \R^N \smallskip\\
-\Delta m - \dive({m\nabla u })=0 \qquad  & \text{in} \ \R^N \smallskip\\
\int_{\R^N}m=1,\qquad m>0
\end{cases}
\end{equation}
admits a solution $(m,u,\lambda)$. If furthermore $3\le N\le 5$ and \eqref{eq:final_ass} holds true, 
then \eqref{e:sys} admits a second distinct solution, for every $0<\alpha<\alpha^*$, for a possibly smaller threshold $\alpha^*>0$.
\end{theorem}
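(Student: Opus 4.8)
The plan is to deduce Theorem~\ref{thm:MFG} from Corollaries~\ref{coro:coro1}--\ref{coro:coro2} and Theorem~\ref{thm:mountain_pass} via the Hopf--Cole transform \cite{MR2269875}, which identifies the ergodic MFG system \eqref{e:sys} with quadratic Hamiltonian with the Schr\"odinger equation \eqref{eq:main}. Given a strictly positive $U\in H^1(\R^N)$ with $\int_{\R^N}U^2=\rho^2$ solving $-\Delta U+V U=\lambda_{\mathrm{NLS}}U+|U|^{2^*-2}U$, set
\[
m:=\frac{U^2}{\rho^2},\qquad u:=-2\log U,\qquad \lambda:=2\lambda_{\mathrm{NLS}}.
\]
Then $m>0$, $\int_{\R^N}m=1$, and a direct computation gives $\nabla u=-2\nabla U/U$, so that $m\nabla u=-\nabla m$ and the Kolmogorov--Fokker--Planck equation $-\Delta m-\dive(m\nabla u)=0$ holds identically, while $-\Delta u+\tfrac12|\nabla u|^2=2\Delta U/U=2V(x)-2\lambda_{\mathrm{NLS}}-2U^{2^*-2}$. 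Using $2^*-2=4/(N-2)$ and $U^{4/(N-2)}=\rho^{4/(N-2)}m^{2/(N-2)}$, this rewrites as
\[
-\Delta u+\tfrac12|\nabla u|^2+\lambda=-2\rho^{\frac{4}{N-2}}\,m^{\frac{2}{N-2}}+2V(x)\qquad\text{in }\R^N,
\]
which is exactly the first equation of \eqref{e:sys} provided $\alpha=2\rho^{4/(N-2)}$, i.e.\ $\rho=\rho(\alpha):=(\alpha/2)^{(N-2)/4}$. Thus $U\mapsto(m,u,\lambda)$ sends strictly positive solutions of \eqref{eq:main} at mass $\rho(\alpha)^2$ to solutions of \eqref{e:sys}.

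Since $\alpha\mapsto\rho(\alpha)$ is a continuous increasing bijection of $(0,+\infty)$ onto itself, and the hypotheses of Theorem~\ref{thm:MFG} are exactly those of Corollary~\ref{coro:coro2}, it suffices to put $\alpha^*:=2(\rho^*)^{4/(N-2)}$, where $\rho^*$ is as in that corollary: for $0<\alpha<\alpha^*$ we have $0<\rho(\alpha)<\rho^*$, hence \eqref{eq:main} at mass $\rho(\alpha)^2$ admits a solution $U_\alpha$ which is a local minimizer of $E$ on $\mathcal M_{\rho(\alpha)}$ with $E(U_\alpha)<0$ and $\lambda_{\mathrm{NLS}}<0$. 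Before transforming it, one checks that $U_\alpha$ is regular and strictly positive: a Brezis--Kato/Moser iteration using $V\in L^{r_1}(\R^N)$, $r_1>N/2$, gives $U_\alpha\in L^\infty_{\mathrm{loc}}$, then elliptic bootstrap gives $U_\alpha\in W^{2,r_1}_{\mathrm{loc}}\subset C^{1,\gamma}_{\mathrm{loc}}$; by the refined strong maximum principle recalled in Remark~\ref{rmk:regmax}, $U_\alpha>0$ everywhere, so $u_\alpha=-2\log U_\alpha$ is well defined with the same local regularity; finally, since $\lambda_{\mathrm{NLS}}<0$ and $V$ vanishes at infinity, $U_\alpha$ decays exponentially, so $m_\alpha$ decays exponentially (in particular $m_\alpha\in L^1(\R^N)$) and $u_\alpha$ grows at most linearly. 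Hence $(m_\alpha,u_\alpha,\lambda_\alpha)$, $\lambda_\alpha=2\lambda_{\mathrm{NLS}}<0$, is a solution of \eqref{e:sys} in the relevant class, which proves the first assertion.

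For the second one, assume in addition $3\le N\le5$ and \eqref{eq:final_ass}. Possibly shrinking $\rho^*$ (hence $\alpha^*$), Theorem~\ref{thm:mountain_pass} provides at the same mass $\rho(\alpha)^2$ a further solution $\widetilde U_\alpha$ of \eqref{eq:main}, a mountain pass critical point at a \emph{positive} energy level, again with negative Lagrange multiplier, and strictly positive everywhere by the same regularity argument. Since $E(\widetilde U_\alpha)>0>E(U_\alpha)$ we have $\widetilde U_\alpha\neq U_\alpha$, so the transform yields a solution $(\widetilde m_\alpha,\widetilde u_\alpha,\widetilde\lambda_\alpha)$ of \eqref{e:sys} with $\widetilde\lambda_\alpha<0$ which is distinct from the first one: equality of the two triples would force $\widetilde m_\alpha=m_\alpha$, i.e.\ $\widetilde U_\alpha^2=U_\alpha^2$, hence $\widetilde U_\alpha=U_\alpha$, a contradiction.

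All the variational and compactness work is already contained in Corollary~\ref{coro:coro2} and Theorem~\ref{thm:mountain_pass}; what remains is essentially bookkeeping, plus the one genuine point that the Hopf--Cole dictionary requires $U$ to be \emph{strictly} positive so that $\log U$ is defined pointwise. This is exactly what the assumption $r_1>N/2$ buys, upgrading the a.e.\ positivity of Remark~\ref{rmk:regmax} to pointwise positivity and giving the local $C^{1,\gamma}$ regularity needed to read \eqref{e:sys} classically; matching this interior regularity and the exponential decay of $U$ with the integrability prescribed by the MFG formulation is the only nontrivial check, and I expect it, rather than the transform, to be the main obstacle.
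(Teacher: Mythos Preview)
Your proof is correct and follows essentially the same approach as the paper: both apply the Hopf--Cole transform to reduce \eqref{e:sys} to the Schr\"odinger problem \eqref{eq:main1} (equivalently \eqref{eq:main}) with $\mu=\alpha/2$, then invoke the main existence results together with Remark~\ref{rmk:regmax} for strict positivity. The paper's argument is terser---it does not spell out the exponential decay or the $C^{1,\gamma}$ regularity you discuss---but your extra care about making $\log U$ well defined pointwise, via $r_1>N/2$ and Remark~\ref{rmk:regmax}, is exactly the point the paper alludes to.
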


While the theorem above is a simple corollary of our results, via the Hopf-Cole transform, it is an open 
problem whether it can be generalized to more general Hamiltonians, instead of $H(p)=\frac12|p|^2$. 
Starting from \cite{MR3864209}, the existence of global minimizers for ergodic Mean Field Games 
with trapping potential and general Hamiltonian has been investigated thoroughly, in the mass 
subcritical regime and up to the mass critical one \cite{MR4873723}. Local minimizers where 
obtained in the mass supercritical 
case, up to the Sobolev critical one, on bounded domains with Neumann boundary conditions in 
\cite{CCV23}. On the other hand, up to our knowledge, no result about the existence of mountain pass 
solutions for Mean Field Games with non-quadratic Hamiltonian appears in the literature.

The paper is organized as follows: Section \ref{sec:prel} contains some preliminary results, in particular concerning the decomposition of the potentials described in Remarks  \ref{rmk:no_neg_sol_intro}, \ref{EX_po}, and the proof of Corollaries \ref{coro:coro1}, \ref{coro:coro2}; in 
Section \ref{sec:MP_geo} we quickly describe the mountain pass geometry of the energy functional on the 
mass constraint, under suitable assumptions; finally, we prove our main results in Sections \ref{sec:GS} 
(Theorem \ref{Thm_neg}) and \ref{sec:mp} (Theorems \ref{thm:mountain_pass} and \ref{thm:MFG}).

\section{Notation and preliminary results}\label{sec:prel}

In this paper, for $N\ge3$, 
we denote by $L^{p}(\R^N)$ the Lebesgue space with norm $\|\cdot\|_{p}$ and by
$H^{1}(\R^N)$ the usual Sobolev space with the norm $\|\cdot\|_{H^{1}}^2=\|\nabla \cdot\|_{2}^2+\| \cdot\|_{2}^2$, while $D^{1,2}(\R^N)$ stands for the homogenous Sobolev space with norm
$\|\nabla \cdot\|_{2}$. Denoting with $S$ the best Sobolev constant of the embedding $D^{1,2}(\R^N)
\hookrightarrow L^{2^{*}}(\R^N)$, we have
\begin{equation}\label{ineq:sobolev}
S=\inf\limits_{u\in H^{1}(\R^N)\setminus \{0\}}\frac{\|\nabla u\|_2^{2}}{\|u\|_{2^{*}}^2}=\min\limits_{u\in D^{1,2}(\R^N)\setminus \{0\}}\frac{\|\nabla u\|_{2}^{2}}{\|u\|_{2^{*}}^2}.
\end{equation}

For convenience of calculation we apply the transformation
\begin{equation}\label{trans}
u= \frac{1}{\rho} U,\qquad \ \ \ \ \ \mu= \rho^{2^{*}-2}>0,
\end{equation}
to convert problem \eqref{eq:main} into the following one, which also incorporates the sign condition:
\begin{equation}\label{eq:main1}
\begin{cases}
-\Delta u +V(x)u = \lambda u + \mu |u|^{2^{*}-2}u  & \text{in }\mathbb{R}^{N},\smallskip\\
u\geq 0,\quad \int_{\mathbb{R}^{N}} u^2\,dx = 1.
\end{cases}
\end{equation}
Thus, the solutions of \eqref{eq:main1} correspond to critical points of the energy functional
\begin{equation}\label{func:main1}
E_{\mu}(u)=\frac{1}{2}\int_{\mathbb{R}^{N}}|\nabla u|^{2}dx + \frac{1}{2}\int_{\mathbb{R}^{N}}V(x) u^{2}dx-\frac{\mu}{2^{*}}\int_{\mathbb{R}^{N}}|u|^{2^{*}}dx
\end{equation}
constrained to the $L^{2}$-sphere
\begin{equation}\label{manifold:1}
\mathcal{M}:=\mathcal{M}_1=\left\{u\in H^{1}(\mathbb{R}^{N}): \int_{\mathbb{R}^{N}} u^{2}=1\right\}.
\end{equation}
Thus, we will show the existence of local minimizers, ground states and mountain pass solutions for 
$E_\mu$ over $\Mcal$, and Theorems \ref{Thm_neg}, \ref{thm:mountain_pass} will follow by the change of 
variable in \eqref{trans}.

By H\"{o}lder, Gagliardo-Nirenberg and Sobolev inequalities, we infer the following inequalities.
\begin{lemma}[{\cite[Lemma 2.2]{MR4443784}}]\label{ineq_VW}
We have, for every $u\in H^1(\R^N)$,
\begin{equation*}
\begin{aligned}\displaystyle
  \left|\int_{\mathbb{R}^{N}} V_1(x)u^{2}dx\right| &\leq  \|V_1\|_{\frac{N}{2}}\|u\|_{2^{*}}^{2} \leq S^{-1}\|V_1\|_{\frac{N}{2}}\|\nabla u\|_{2}^{2},  \\
  \left|\int_{\mathbb{R}^{N}} V_1(x)u(x)\nabla u(x)\cdot x\,dx \right| & \leq \|W_1\|_{N}\|u\|_{2^{*}}\|\nabla u\|_{2} \leq S^{-1/2} \|W_1\|_{N}\|\nabla u\|_{2}^{2},\\
  \left|\int_{\mathbb{R}^{N}} V_2(x)u^{2}dx\right| &\leq  \|V_2\|_{\infty}\|u\|_{2}^{2},  \\
  \left|\int_{\mathbb{R}^{N}} V_2(x)u(x)\nabla u(x)\cdot x\,dx \right| & \leq \|W_2\|_{\infty}\|u\|_{2}\|\nabla u\|_{2}.
\end{aligned}
\end{equation*} 
\end{lemma}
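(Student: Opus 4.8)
The plan is to derive all four inequalities from a single application of H\"older's inequality, completed by the Sobolev embedding \eqref{ineq:sobolev} for the terms involving $V_1$ and $W_1$, and by Cauchy--Schwarz (a special case of H\"older) for the bounded potentials $V_2$ and $W_2$; no genuine use of the Gagliardo--Nirenberg inequality is needed for these specific estimates.

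First I would handle the two quadratic estimates. For $V_1$, apply H\"older with the conjugate exponents $N/2$ and $N/(N-2)$ and observe that $\|u^2\|_{N/(N-2)}=\|u\|_{2^*}^2$, since $2^*=2N/(N-2)$; this gives $\int_{\R^N}|V_1|u^2\le\|V_1\|_{N/2}\|u\|_{2^*}^2$, and then \eqref{ineq:sobolev} (in the form $\|u\|_{2^*}^2\le S^{-1}\|\nabla u\|_2^2$) yields the second half. For $V_2$, one simply pulls out $\|V_2\|_\infty$, leaving $\int_{\R^N}u^2=\|u\|_2^2$.

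The two gradient estimates rest on the only slightly non-obvious point, namely the interaction between the weight $|x|$ hidden in $W_i$ and the factor $\nabla u\cdot x$. For $x\ne0$ write $x=|x|\,\widehat x$ with $\widehat x=x/|x|$, so that $V_i(x)\,u\,\nabla u\cdot x=W_i(x)\,u\,(\nabla u\cdot\widehat x)$ and, since $|\nabla u\cdot\widehat x|\le|\nabla u|$, the integrand is pointwise dominated by $|W_i|\,|u|\,|\nabla u|$ (the origin being a null set). For $i=1$ I then apply H\"older with the three exponents $N$, $2^*$, $2$, which are conjugate because $\tfrac1N+\tfrac1{2^*}+\tfrac12=1$, obtaining the bound $\|W_1\|_N\|u\|_{2^*}\|\nabla u\|_2$, and \eqref{ineq:sobolev} replaces $\|u\|_{2^*}$ by $S^{-1/2}\|\nabla u\|_2$. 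For $i=2$ I pull out $\|W_2\|_\infty$ and close with Cauchy--Schwarz, $\int_{\R^N}|u|\,|\nabla u|\le\|u\|_2\|\nabla u\|_2$.

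I do not expect a real obstacle: the proof is bookkeeping about the choice of H\"older exponents, and the single step worth spelling out is the rewriting $V_i(x)\,x=W_i(x)\,\widehat x$ that turns the polynomial weight into a unit vector, allowing the gradient term to be estimated by $|\nabla u|$. Finiteness of all the integrals is not a separate issue: it follows a posteriori from the same H\"older bounds, given \eqref{def_pot} and $u\in H^1(\R^N)\subset D^{1,2}(\R^N)$.
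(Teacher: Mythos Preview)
Your argument is correct and is exactly the approach the paper has in mind: the lemma is quoted from \cite{MR4443784} with only the one-line justification ``By H\"older, Gagliardo--Nirenberg and Sobolev inequalities,'' and your H\"older/Sobolev bookkeeping (including the rewriting $V_i(x)\,x = W_i(x)\,\widehat x$ to reduce the weighted gradient term to $|W_i|\,|u|\,|\nabla u|$) is precisely how one fills in the details. Your remark that Gagliardo--Nirenberg is not actually needed for these four particular estimates is also correct.
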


\begin{lemma}\label{lem:VW_loss}
If $(w_{n})_{n}$ is a bounded sequence in $H^{1}(\R^{N})$ with $w_{n} \rightharpoonup 0$ weakly in $H^{1}(\R^{N})$, then
\begin{equation*}
\begin{aligned}
&\int_{\R^{N}}V(x)w_{n}^{2} dx \to 0,\\
&\int_{\R^{N}}V(x)w_{n}\nabla w_{n} \cdot x\,dx \to 0,
\end{aligned}
\end{equation*}
as $n \to +\infty$.
\end{lemma}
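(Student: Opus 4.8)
The plan is to treat both integrals in parallel, using the decompositions $V=V_1+V_2$ and $W:=V|\cdot|=W_1+W_2$ from \eqref{def_pot}, together with the pointwise bound
\[
\left| V(x)\, w_n(x)\,\nabla w_n(x)\cdot x \right| \le |W(x)|\,|w_n(x)|\,|\nabla w_n(x)|,
\]
which follows from $|W(x)|=|V(x)|\,|x|$ and $|\nabla w_n(x)\cdot x|\le |x|\,|\nabla w_n(x)|$, and which turns the second integral into an $L^N$-weighted version of the first. Write $M:=\sup_n\|w_n\|_{H^1}<\infty$. The two ingredients are: (i) the compact Sobolev embedding $H^1(B_R)\hookrightarrow L^2(B_R)$, so that $\|w_n\|_{L^2(B_R)}\to0$ as $n\to\infty$ for every fixed $R>0$, since $w_n\rightharpoonup0$ in $H^1(\R^N)$; and (ii) the density of bounded, compactly supported functions in $L^{N/2}(\R^N)$ and in $L^N(\R^N)$.

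For the first integral, fix $\varepsilon>0$. By (ii) choose $g\in L^\infty(\R^N)$ with $\supp g\subset B_R$ for some $R>0$ and $\|V_1-g\|_{N/2}<\varepsilon$. By H\"older (as in Lemma \ref{ineq_VW}) and \eqref{ineq:sobolev},
\[
\left| \int_{\R^N}(V_1-g)\,w_n^2\,dx\right| \le \|V_1-g\|_{N/2}\,\|w_n\|_{2^*}^2 \le S^{-1}M^2\,\varepsilon,
\]
whereas $\left|\int_{\R^N} g\,w_n^2\,dx\right| \le \|g\|_\infty\,\|w_n\|_{L^2(B_R)}^2\to0$ by (i). For the $V_2$-part, pick $R'>0$ with $|V_2(x)|<\varepsilon$ for $|x|>R'$; then
\[
\left|\int_{\R^N} V_2\,w_n^2\,dx\right|\le \|V_2\|_\infty\,\|w_n\|_{L^2(B_{R'})}^2 + \varepsilon\,\|w_n\|_2^2 \le \|V_2\|_\infty\,\|w_n\|_{L^2(B_{R'})}^2 + M^2\varepsilon,
\]
and the first term vanishes as $n\to\infty$, again by (i). Hence $\limsup_{n}\left|\int_{\R^N}V w_n^2\,dx\right| \le (S^{-1}+2)M^2\varepsilon$, and letting $\varepsilon\to0$ proves the first assertion.

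The second integral is handled in exactly the same way, now splitting $|W(x)|\,|w_n|\,|\nabla w_n|$ into a $W_1$- and a $W_2$-part. For the $W_1$-part one uses H\"older, $\int_{\R^N}|W_1|\,|w_n|\,|\nabla w_n|\,dx \le \|W_1\|_N\|w_n\|_{2^*}\|\nabla w_n\|_2$, approximating $W_1$ in $L^N$ by a bounded function $h$ supported in some $B_R$: the tail is $\le S^{-1/2}M^2\|W_1-h\|_N$, while $\int_{B_R}|h|\,|w_n|\,|\nabla w_n|\,dx\le\|h\|_\infty\|w_n\|_{L^2(B_R)}\|\nabla w_n\|_2\to0$ by (i). The $W_2$-part is split over $B_{R'}$ and its complement, with $|W_2|<\varepsilon$ outside $B_{R'}$, using $\|w_n\|_2,\|\nabla w_n\|_2\le M$ and (i) on $B_{R'}$; one concludes as before. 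There is no essential difficulty in this argument; the only point requiring care is the order of the quantifiers — $\varepsilon$ first, then the approximants $g,h$ and the radii $R,R'$, and only afterwards $n\to\infty$ — so that the compactness step (i) is invoked on fixed balls only.
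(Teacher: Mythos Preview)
Your proof is correct and follows essentially the same approach as the paper: split $V=V_1+V_2$ and $W=W_1+W_2$ as in \eqref{def_pot}, use the Sobolev/H\"older estimates of Lemma \ref{ineq_VW} for the $L^{N/2}$ (resp.\ $L^N$) part, and the strong $L^2_{\loc}$ convergence together with the vanishing at infinity for the $L^\infty$ part. The only cosmetic differences are that the paper phrases the $V_1$-step via weak convergence of $w_n^2$ in $L^{\frac{N}{N-2}}$ rather than density, and bounds the second integral by Cauchy--Schwarz as $\big(\int (W w_n)^2\big)^{1/2}\|\nabla w_n\|_2$ instead of your three-factor H\"older; these are equivalent formulations of the same argument.
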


\begin{proof}
Recall that $V$ satisfies assumption \eqref{def_pot}. We know that $w_{n}\rightharpoonup 0$ weakly in $H^{1}(\mathbb{R}^{N})$, strongly in $L^{2}_{\loc}(\mathbb{R}^{N})$ and a.e. in $\mathbb{R}^{N}$. Thus, by \eqref{def_pot}, we have
\begin{equation*}\label{Split_Po}
  \int_{\R^{N}}V(x)w_{n}^{2}dx=\int_{\R^{N}}V_{1}(x)w_{n}^{2}dx+\int_{\R^{N}}V_{2}(x)w_{n}^{2}dx,
\end{equation*}
where $V_{1}\in L^{\frac{N}{2}}(\R^{N})$ and $V_{2}\in L^{\infty}(\R^{N})$. Since $w^{2}_{n}$ is bounded in $L^{\frac{N}{N-2}}(\R^{N})$, we obtain that $w^{2}_{n} \rightharpoonup 0$ weakly in $L^{\frac{N}{N-2}}(\R^{N})$. Combining this with \eqref{def_pot} and the strong $L^{2}_{\loc}$ convergence, we have
\begin{equation*}\label{Split_2a}
\int_{\mathbb{R}^{N}}V(x) w_{n}^{2}dx\rightarrow 0, \ \ \text{as } \ n\rightarrow+\infty.
\end{equation*}

On the other hand, by the H\"{o}lder inequality, we have
\begin{equation*}
\int_{\R^{N}}V(x)w_{n}\nabla w_{n} \cdot x\,dx \leq \left(\int_{\R^{N}}(W(x)w_{n})^{2}dx\right)^{\frac{1}{2}}\|\nabla w_{n}\|_{2}.
\end{equation*}
From the assumption on $W$ in \eqref{def_pot}, proceeding similarly as before, we have
\begin{equation*}\label{Split_W}
\int_{\R^{N}}V(x)w_{n}\nabla w_{n} \cdot x\,dx \to 0, \ \ \text{as } \ n\rightarrow\infty,
\end{equation*}
completing the proof.
\end{proof}
\begin{remark}\label{rmk:vanishing_V}
Arguing in a similar way, one can show that, if  $w_{n} \rightharpoonup 0$ weakly in 
$H^{1}(\R^{N})$ and $\varphi\in C^\infty_0(\R^N)$, then
\[
\int_{\R^{N}}V(x)w_{n} \varphi \,dx \to 0
\]
as $n \to +\infty$.
\end{remark}

\begin{lemma}\label{lem:max_Princ}
Let $(u,\lambda)\in H^{1}(\R^{N})\times \R$ be a  solution of 
\eqref{eq:main1}, so that in particular $u$ is nontrivial and non-negative.
Then $\lambda<0$.
\end{lemma}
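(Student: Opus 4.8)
The statement to prove is Lemma~\ref{lem:max_Princ}: any nontrivial non-negative solution $(u,\lambda)$ of \eqref{eq:main1} must have $\lambda<0$. The natural strategy is to combine the energy identity coming from testing the equation with $u$ itself, the Pohozaev identity, and the weak-attractivity assumption \eqref{Assump_Neg}. First, I would test \eqref{eq:main1} against $u$ to get
\[
\int_{\R^N}|\nabla u|^2\,dx + \int_{\R^N}V(x)u^2\,dx = \lambda\int_{\R^N}u^2\,dx + \mu\int_{\R^N}|u|^{2^*}\,dx = \lambda + \mu\int_{\R^N}|u|^{2^*}\,dx,
\]
using $\|u\|_2^2=1$. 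On the other hand, the Pohozaev identity for \eqref{eq:main1} (which is \eqref{Pohozaev_mini}, referred to but not displayed in the excerpt; it is obtained by testing with $\nabla u\cdot x$ and uses the $W\in L^N+L^\infty$ assumption to make the potential term meaningful) reads schematically
\[
\frac{N-2}{2}\int_{\R^N}|\nabla u|^2\,dx + \frac{N}{2}\int_{\R^N}V(x)u^2\,dx + \frac12\int_{\R^N}V(x)\,u\,\nabla u\cdot x\,dx = \frac{N}{2}\lambda + \frac{N-2}{2}\mu\int_{\R^N}|u|^{2^*}\,dx,
\]
after simplification (the $\mu|u|^{2^*}$ term scales exactly at the critical rate). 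Eliminating $\mu\int|u|^{2^*}$ between these two relations yields an expression of the form $a\int|\nabla u|^2 + (\text{potential terms}) = b\lambda$ with $a,b>0$ for $N\ge 3$.

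The key point is that the combination obtained should reproduce, up to a positive factor, the quadratic form $\int(|\nabla u|^2 + V(x)u^2)\,dx$ that appears in \eqref{Assump_Neg}. Indeed, multiplying the energy identity by $\tfrac{N-2}{2}$ and subtracting the Pohozaev identity, the $\mu$-terms cancel and one is left with something like $-\int_{\R^N}V(x)u^2\,dx - \tfrac12\int_{\R^N}V(x)u\,\nabla u\cdot x\,dx = \lambda - \tfrac{N-2}{2}\lambda + \dots$; I would instead look for the linear combination that isolates exactly $\int(|\nabla u|^2+Vu^2)$. Since $u$ need not satisfy the eigenvalue equation \eqref{eq:nomain2}, I cannot directly invoke $\tilde\lambda_1$; instead I normalize: because $\|u\|_2^2=1$ and $u\ge0$, $u$ (or rather $|u|$, which equals $u$) is an admissible competitor in \eqref{Assump_Neg}, hence
\[
\int_{\R^N}\bigl(|\nabla u|^2 + V(x)u^2\bigr)\,dx \ge \tilde\lambda_1,
\]
but this only gives a lower bound, not the needed sign. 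The cleaner route: from the energy identity, $\lambda = \int(|\nabla u|^2+Vu^2) - \mu\int|u|^{2^*}$, and one must show the right-hand side is negative. Here $\int(|\nabla u|^2+Vu^2)$ is \emph{not} obviously $\le 0$, so a direct argument fails — we genuinely need to also use Pohozaev to bring in $\mu\int|u|^{2^*}>0$ with the right weight.

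So the decisive computation is: eliminate $\int(|\nabla u|^2+Vu^2)$ — call it $Q(u)$ — and $\int|u|^{2^*}$ — call it $P$ — between the two identities $Q(u)=\lambda+\mu P$ and (Pohozaev). Writing Pohozaev as $\tfrac{N-2}{2}\|\nabla u\|_2^2 + \tfrac{N}{2}\int Vu^2 + \tfrac12\int Vu\,\nabla u\cdot x = \tfrac N2\lambda + \tfrac{N-2}{2}\mu P$ and combining to cancel $\mu P$: multiply the energy identity by $\tfrac{N-2}{2}$ and subtract, getting
\[
-\int_{\R^N}V(x)u^2\,dx - \tfrac12\int_{\R^N}V(x)\,u\,\nabla u\cdot x\,dx = \lambda,
\]
i.e. $\lambda = -\int_{\R^N}V(x)u^2\,dx - \tfrac12\int_{\R^N}V(x)u\,\nabla u\cdot x\,dx$. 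Hmm, this still doesn't manifestly have a sign; the cleaner identity is probably the other combination cancelling $Q(u)$ or $\|\nabla u\|_2^2$. The honest main obstacle is precisely pinning down which linear combination of the energy and Pohozaev identities, together with \eqref{Assump_Neg}, forces $\lambda<0$ — my expectation is that one cancels $\|\nabla u\|_2^2$ to obtain an inequality like $c\,\lambda = Q(u) - (\text{positive multiple of }\|\nabla u\|_2^2 \text{ or of }\mu P)$ where the right side is $<0$ because $Q(u)<0$ would contradict... no — because $\mu P>0$ and the Sobolev inequality controls $P$ by $\|\nabla u\|_2$. Concretely: from $\lambda = Q(u)-\mu P$, if $\lambda\ge0$ then $Q(u)\ge\mu P>0$; but Pohozaev should then also force $Q(u)$ and $\|\nabla u\|_2^2$ to have incompatible signs or a contradiction with $\lambda\ge0$, since for $\lambda\ge0$ the function $u$ would be a positive-energy-type solution, whereas the equation $-\Delta u+Vu=\lambda u+\mu u^{2^*-1}\ge \lambda u$ with $\lambda\ge0$ combined with $Q(u)\ge0$ and the strict negativity $\tilde\lambda_1<0$ attained by $\psi_1$ gives a contradiction via comparing $u$ with the ground state $\psi_1$ of \eqref{eq:nomain2} — testing \eqref{eq:nomain2} against $u$ and \eqref{eq:main1} against $\psi_1$ and subtracting yields $(\lambda-\tilde\lambda_1)\int u\psi_1 = \mu\int u^{2^*-1}\psi_1 + (\lambda-\lambda)\dots$, wait: $\int(-\Delta\psi_1+V\psi_1)u = \tilde\lambda_1\int\psi_1 u$ and $\int(-\Delta u+Vu)\psi_1 = \lambda\int u\psi_1+\mu\int u^{2^*-1}\psi_1$; the left sides are equal, so $\tilde\lambda_1\int\psi_1 u = \lambda\int u\psi_1 + \mu\int u^{2^*-1}\psi_1$, whence $(\tilde\lambda_1-\lambda)\int\psi_1 u = \mu\int u^{2^*-1}\psi_1 > 0$ (since $u,\psi_1\ge 0$, nontrivial, and positive a.e.\ by Remark~\ref{rmk:regmax}, so both integrals are strictly positive). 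Therefore $\tilde\lambda_1-\lambda>0$, i.e. $\lambda<\tilde\lambda_1<0$. This is the clean argument, and it makes the Pohozaev identity unnecessary for this particular lemma. The one point requiring care is that $\int\psi_1 u>0$, which follows from strict positivity a.e.\ of both functions (Remark~\ref{rmk:regmax}), and that $\psi_1\in H^1$ is a legitimate test function in \eqref{eq:main1} and vice versa, which holds since both solve equations with the same $L^{N/2}+L^\infty$ potential.
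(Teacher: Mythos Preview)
Your final argument---testing \eqref{eq:main1} with $\psi_1$ and \eqref{eq:nomain2} with $u$, equating the left-hand sides, and concluding $(\tilde\lambda_1-\lambda)\int u\psi_1 = \mu\int u^{2^*-1}\psi_1>0$ hence $\lambda<\tilde\lambda_1<0$---is exactly the paper's proof. The Pohozaev detour was unnecessary, as you correctly realized.
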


\begin{proof}
By assumption \eqref{Assump_Neg}, let $\tilde{\lambda}_{1}<0$ and $\psi_{1}\in \Mcal$, $\psi_{1}\geq 0$,  as in Remark \ref{rem:LL}. Then testing equations \eqref{eq:main1} and \eqref{eq:nomain2} with 
$\psi_{1}$ and $u$ respectively, we have
\begin{align*}
     \int_{\R^{N}} \nabla u \cdot\nabla \psi_{1}dx +   \int_{\R^{N}} V(x) u  \psi_{1}dx&=  \lambda\int_{\R^{N}} u  \psi_{1}dx +\mu\int_{\R^{N}}  |u|^{2^{*}-2}u  \psi_{1}dx,\\
     \int_{\R^{N}} \nabla u \cdot\nabla \psi_{1}dx +   \int_{\R^{N}} V(x) u  \psi_{1}dx&= \tilde{\lambda}_{1}\int_{\R^{N}}  u  \psi_{1} dx, 
\end{align*}
which means that
\begin{equation*}
  (\tilde{\lambda}_{1}-\lambda) \int_{\R^{N}}  u  \psi_{1} dx = \mu\int_{\R^{N}}  |u|^{2^{*}-2}u  \psi_{1}dx.
\end{equation*}
Recalling that both $u$ and $\psi_{1}$ are strictly positive a.e. in $\R^N$, by Remark \ref{rmk:regmax}, and that $\mu>0$, we conclude that $\lambda <  \tilde{\lambda}_{1}<0$.
\end{proof}

To analyze the lack of compactness for bounded Palais-Smale sequences of $E_\mu$, it is convenient 
to introduce the functional ``at infinity'':
\begin{equation}\label{func_without_V}
  E_{\mu,\infty}(u)=\frac{1}{2}\int_{\mathbb{R}^{N}}|\nabla u|^{2}dx-\frac{\mu}{2^{*}}\int_{\mathbb{R}^{N}}|u|^{2^{*}}dx.
\end{equation}
Moreover, we recall the following concentration-compactness result from 
\cite{Lewinlecture,MR1632171}.
\begin{lemma}\label{Concom}
Let $(u_{n})_{n}$ be a bounded sequence in $D^{1,2}(\mathbb{R}^{N})$, and define
\begin{equation*}
  m(u) = \sup \left\{\int_{\mathbb{R}^{N}} |u|^{2^{*}} : \begin{split}
  \text{there exists }(n_k)_k &\subset\N,\  
  (\alpha_k)_k\subset\R,\  
  (x_k)_k\subset\R^N,\text{ with }\\
  &\alpha_{k}^{-N/2^{*}}u_{n_{k}} \left(\frac{\cdot-x_{k}}{\alpha_{k}}\right) \rightharpoonup u
  \end{split}\right\}.
\end{equation*}
Then, $m(u)=0$ if and only if $u_{n}\rightarrow0$ strongly in $L^{2^{*}}(\mathbb{R}^{N})$.
\end{lemma}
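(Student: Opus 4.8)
The plan is to establish the two implications of the equivalence separately. The guiding observation is that the rescaling $f \mapsto \alpha^{-N/2^{*}} f\big((\cdot - x)/\alpha\big)$ appearing in the definition of $m(u)$ is precisely the dilation that leaves both $\|\nabla \cdot\|_{2}$ and $\|\cdot\|_{2^{*}}$ invariant (indeed $N/2^{*} = (N-2)/2$), so that rescaled sequences stay bounded in $D^{1,2}(\R^{N})$ and have the same $L^{2^{*}}$ norm as the original ones.

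\emph{From a nonzero profile to non-vanishing.} Suppose $m(u) > 0$ for some $u$, i.e.\ there are subsequences $(n_{k})_{k}$, $(\alpha_{k})_{k} \subset (0,+\infty)$, $(x_{k})_{k} \subset \R^{N}$ with $v_{k} := \alpha_{k}^{-N/2^{*}} u_{n_{k}}\big((\cdot - x_{k})/\alpha_{k}\big) \rightharpoonup u \neq 0$ in $D^{1,2}(\R^{N})$. By the scaling invariance recalled above, $\|v_{k}\|_{2^{*}} = \|u_{n_{k}}\|_{2^{*}}$ for every $k$; moreover $(v_{k})_{k}$ is bounded in $D^{1,2}(\R^{N})$, hence by the Sobolev embedding $v_{k} \rightharpoonup u$ weakly in $L^{2^{*}}(\R^{N})$ as well. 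Weak lower semicontinuity of the $L^{2^{*}}$ norm then gives $0 < \|u\|_{2^{*}} \le \liminf_{k} \|v_{k}\|_{2^{*}} = \liminf_{k} \|u_{n_{k}}\|_{2^{*}}$, so $u_{n} \not\to 0$ in $L^{2^{*}}(\R^{N})$. This proves the ``if'' part of the statement (in contrapositive form), and is entirely elementary.

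\emph{From non-vanishing to a nonzero profile.} Conversely, assume $u_{n} \not\to 0$ in $L^{2^{*}}(\R^{N})$; passing to a subsequence we may suppose $\|\nabla u_{n}\|_{2} \le M$ and $\|u_{n}\|_{2^{*}} \ge \delta > 0$ for all $n$. The key tool is a \emph{refined} Sobolev inequality: for a suitable $\theta \in (0,1)$ and $C = C(N)$,
\[
\|f\|_{2^{*}} \le C\, \|\nabla f\|_{2}^{\theta}\, \|f\|_{\dot B^{-(N-2)/2}_{\infty,\infty}}^{1-\theta},
\]
where the homogeneous Besov norm on the right — equivalently $\sup_{t>0} t^{(N-2)/4}\|e^{t\Delta}f\|_{\infty}$ — is invariant under the dilations above. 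Inserting the bounds on $u_{n}$ yields $\|u_{n}\|_{\dot B^{-(N-2)/2}_{\infty,\infty}} \ge c(\delta, M) > 0$. Unwinding this (via a Littlewood--Paley decomposition, or via the heat semigroup), for each $n$ one finds a scale $\alpha_{n} > 0$ and a centre $x_{n} \in \R^{N}$ such that $\big|\int_{\R^{N}} \varphi\, v_{n}\,dx\big| \ge c' > 0$ for a fixed Schwartz function $\varphi$, where $v_{n} := \alpha_{n}^{-N/2^{*}} u_{n}\big((\cdot - x_{n})/\alpha_{n}\big)$. The sequence $(v_{n})_{n}$ is bounded in $D^{1,2}(\R^{N})$; extracting a weakly convergent subsequence $v_{n} \rightharpoonup u$ in $D^{1,2}(\R^{N})$ (hence weakly in $L^{2^{*}}$, and strongly in $L^{2}_{\loc}$ by Rellich--Kondrachov) we get $\big|\int_{\R^{N}} \varphi\, u\,dx\big| = \lim_{n} \big|\int_{\R^{N}} \varphi\, v_{n}\,dx\big| \ge c' > 0$, so $u \neq 0$ and $m(u) \ge \|u\|_{2^{*}}^{2^{*}} > 0$. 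This proves the ``only if'' part.

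\emph{Main obstacle.} The delicate step is the second implication, and within it the refined Sobolev inequality together with the extraction of the correct concentration parameters $(\alpha_{n}, x_{n})$, which is what guarantees that the rescaled sequence does not vanish in the sense of Lions and hence admits a nonzero weak limit. All of this is carried out in detail in \cite{Lewinlecture,MR1632171}; alternatively, one may run Lions' second concentration-compactness principle — the one dealing with dilations — directly on the measures $|\nabla u_{n}|^{2}\,dx$ and $|u_{n}|^{2^{*}}\,dx$, localising the ``non-vanishing'' alternative. Since the result is standard, we content ourselves with recording it here for later use and refer to those references for the complete argument.
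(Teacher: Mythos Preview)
Your sketch is correct and in fact goes well beyond what the paper does: the paper provides \emph{no} proof of this lemma at all, merely recalling it from \cite{Lewinlecture,MR1632171} and using it as a black box. Your outline of the two implications---the elementary one via scaling invariance and weak lower semicontinuity, and the substantive one via the refined (G\'erard--Meyer--Oru type) Sobolev inequality with the Besov norm $\dot B^{-(N-2)/2}_{\infty,\infty}$---is precisely the argument of \cite{MR1632171}, so you are not taking a different route but rather unpacking the cited reference. Since the paper is content to quote the result, your final paragraph (deferring the details to those references) already matches the paper's level of rigor; the preceding sketch is a bonus.
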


Using Lemma \ref{Concom} and \ref{lem:max_Princ}, we have the following  result.
\begin{lemma}\label{cor:split}
Let $(u_{n},\lambda_{n})_{n}$ be a Palais-Smale sequence for $E_{\mu}$ on $\mathcal{M}$:
\begin{enumerate}
\item $(u_n)_n \subset \mathcal{M}$,
\item $E_\mu(u_n) \to c \in \R$,
\item $\|\nabla_{\mathcal M} E_\mu(u_n)\| \to 0$,
\end{enumerate}
as $n\to+\infty$, and assume that
\begin{enumerate}[resume]
\item $(u_n, \lambda_{n})_{n}$ is bounded,
\item $\|u_n^{-}\|_{H^{1}} \to 0 $.
\end{enumerate}
 Then, up to subsequences, there exist $u^{0}\in H^{1}(\R^{N})$, $u^0\ge 0$, and $\lambda^{0}\in\R$ 
 such that $u_{n}\rightharpoonup u^{0}$ weakly in $H^{1}(\R^{N})$, $\lambda_{n} \rightarrow \lambda^{0}$, and 
\[
-\Delta u^0 +V(x)u^{0}= \mu (u^{0})^{2^{*}-1} + \lambda^{0} u^0.
\]
Moreover, only one of the three following alternatives holds:
\begin{enumerate}[label=\roman*)]
\item either $\lambda^{0}<0$ and $u_{n} \to u^{0}$ strongly in $H^{1}(\R^{N})$,
\item or $\lambda^{0} <0$, $u_{n} \rightharpoonup u^{0}$ in $H^{1}(\R^{N})$ (but not strongly), and
\begin{equation}\label{eq:split_ener}
E_{\mu} (u_{n}) \geq E_{\mu}(u^{0})+\frac{1}{N}S^{N/2}\mu^{1-\frac{N}{2}}+o(1), \ \ \mbox{as} \ \ n\rightarrow +\infty,
\end{equation}
\item or $\lambda^{0} \geq 0$, and in such case $u^{0} \equiv 0$.
\end{enumerate}
\end{lemma}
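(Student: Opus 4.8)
The plan is a concentration--compactness analysis of the sequence $w_n:=u_n-u^0$, combining Brezis--Lieb type splittings, the refined maximum principle behind Lemma~\ref{lem:max_Princ}, and the profile extraction of Lemma~\ref{Concom}. \emph{Weak limits, the limiting equation, and the case $\lambda^0\ge 0$.} Since $(u_n)_n$ is bounded in $H^1(\R^N)$, up to a subsequence $u_n\rightharpoonup u^0$ in $H^1(\R^N)$, in $L^2_{\loc}(\R^N)$ and a.e.; from $\|u_n^-\|_{H^1}\to 0$ we get $u^0\ge 0$, and since $\lambda_n$ is bounded we may assume $\lambda_n\to\lambda^0$. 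Condition (3) means that $-\Delta u_n+Vu_n-\mu|u_n|^{2^{*}-2}u_n-\lambda_n u_n\to 0$ in $H^{-1}(\R^N)$; testing with $\varphi\in C^\infty_0(\R^N)$ and passing to the limit---the gradient term by weak convergence, the nonlinearity since $|u_n|^{2^{*}-2}u_n\rightharpoonup|u^0|^{2^{*}-2}u^0$ in $L^{\frac{2N}{N+2}}(\R^N)$ (boundedness plus a.e.\ convergence), the term $\lambda_n\int u_n\varphi$ since $\lambda_n\to\lambda^0$ and $u_n\rightharpoonup u^0$ in $L^2$, the potential term by Remark~\ref{rmk:vanishing_V}---yields $-\Delta u^0+Vu^0=\mu(u^0)^{2^{*}-1}+\lambda^0 u^0$. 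If $\lambda^0\ge 0$ I would then repeat verbatim the proof of Lemma~\ref{lem:max_Princ}, which does not use the $L^2$-normalisation: assuming $u^0\not\equiv 0$, Remark~\ref{rmk:regmax} gives $u^0>0$ a.e., and testing the equation for $u^0$ with $\psi_1$ and \eqref{eq:nomain2} with $u^0$ gives $(\tilde{\lambda}_{1}-\lambda^0)\int u^0\psi_1=\mu\int(u^0)^{2^{*}-1}\psi_1>0$, hence $\lambda^0<\tilde{\lambda}_{1}<0$, a contradiction; so $u^0\equiv 0$, which is alternative iii).

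\emph{The case $\lambda^0<0$: splitting and alternative i).} Using the Brezis--Lieb lemma (both for the $L^{2^{*}}$ norm and for $|u_n|^{2^{*}-2}u_n-|u^0|^{2^{*}-2}u^0-|w_n|^{2^{*}-2}w_n\to 0$ in $H^{-1}$), Lemma~\ref{lem:VW_loss} and Remark~\ref{rmk:vanishing_V} (so that $Vw_n\to 0$ in $H^{-1}$ and $\int Vu_n^2=\int V(u^0)^2+o(1)$), and $\lambda_n\to\lambda^0$, I would record the energy additivity
\[
E_\mu(u_n)=E_\mu(u^0)+E_{\mu,\infty}(w_n)+o(1),
\]
with $E_{\mu,\infty}$ as in \eqref{func_without_V}, together with the fact that $w_n\rightharpoonup 0$ is an approximate critical point ``at infinity'', $-\Delta w_n-\mu|w_n|^{2^{*}-2}w_n-\lambda^0 w_n\to 0$ in $H^{-1}(\R^N)$. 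Testing the latter with $w_n$ gives $\|\nabla w_n\|_2^2+|\lambda^0|\,\|w_n\|_2^2=\mu\|w_n\|_{2^{*}}^{2^{*}}+o(1)$. If $w_n\to 0$ in $L^{2^{*}}(\R^N)$, the right-hand side is $o(1)$, and since $\lambda^0<0$ both $\|\nabla w_n\|_2$ and $\|w_n\|_2$ tend to $0$; hence $u_n\to u^0$ strongly in $H^1(\R^N)$: alternative i).

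\emph{The case $\lambda^0<0$: profile decomposition and alternative ii).} Assume now $w_n\not\to 0$ in $L^{2^{*}}$; then $u_n\not\to u^0$ in $H^1$ (otherwise $w_n\to 0$ in $L^{2^{*}}$). I would run a profile decomposition by iterating Lemma~\ref{Concom}: as long as the current remainder does not vanish in $L^{2^{*}}$, one extracts $\alpha_k^{-(N-2)/2}w_{n_k}\big(\tfrac{\cdot-x_k}{\alpha_k}\big)\rightharpoonup\bar w\ne 0$ in $D^{1,2}(\R^N)$. Rescaling the approximate equation multiplies the linear term by $\alpha_k^2$ (the rescaled potential term being negligible), and I would show $\alpha_k\to 0$: the case $\alpha_k\to\bar\alpha\in(0,\infty)$ is ruled out by the Pohozaev identity for $-\Delta\bar w+\bar\alpha^2|\lambda^0|\bar w=\mu\bar w^{2^{*}-1}$ on $\R^N$, which has only the trivial solution ($\bar w\in L^2$ here, since the $L^2$-norms of the rescaled functions stay bounded when $\alpha_k$ does), and the case $\alpha_k\to\infty$ is ruled out because the equation would then force the $H^{-1}$-norm of the rescaled functions to be $O(1/\alpha_k^2)\to 0$, contradicting $\bar w\ne 0$. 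Hence $\bar w$ solves $-\Delta\bar w=\mu\bar w^{2^{*}-1}$, so by \eqref{ineq:sobolev} $\|\nabla\bar w\|_2^2=\mu\|\bar w\|_{2^{*}}^{2^{*}}\ge S^{N/2}\mu^{1-N/2}$ and $E_{\mu,\infty}(\bar w)=\tfrac1N\|\nabla\bar w\|_2^2\ge\tfrac1N S^{N/2}\mu^{1-N/2}$. A cut-off copy of $\bar w$ placed at the corresponding scale and centre (cut off far from the concentration point, which is harmless since $\alpha_k\to 0$) has $L^2$-norm $o(1)$, is itself an approximate critical point at infinity, and can be subtracted keeping the remainder in $H^1(\R^N)$ and weakly null; by orthogonality of the profiles one has $E_{\mu,\infty}(w_n)=\sum_j E_{\mu,\infty}(\bar w_j)+E_{\mu,\infty}(r_n)+o(1)$, each summand $E_{\mu,\infty}(\bar w_j)\ge\tfrac1N S^{N/2}\mu^{1-N/2}$, and since the final $r_n\to 0$ in $L^{2^{*}}$, $E_{\mu,\infty}(r_n)=\tfrac12\|\nabla r_n\|_2^2+o(1)\ge o(1)$; as the energies are bounded the iteration stops after finitely many steps, and there is at least one profile, so $E_{\mu,\infty}(w_n)\ge\tfrac1N S^{N/2}\mu^{1-N/2}+o(1)$. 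Plugging this into the energy additivity of the previous step gives \eqref{eq:split_ener}: alternative ii).

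I expect the hard part to be this last step: carrying out the profile decomposition of $(w_n)_n$ in $H^1(\R^N)$ with the non-compact linear term $-\lambda^0 w_n$ present, in particular proving that every extracted bubble genuinely concentrates---so that, after rescaling, the linear term washes out---which is where the non-existence of nontrivial $H^1$-solutions of $-\Delta v+|\lambda^0|v=\mu v^{2^{*}-1}$ is used; and handling the low-dimensional subtlety that the Aubin--Talenti profiles are not in $L^2(\R^N)$ for $N=3,4$, so one must work with cut-off bubbles and control the resulting error terms. The remaining steps are routine once the Brezis--Lieb type lemmas and Lemma~\ref{lem:VW_loss} are available.
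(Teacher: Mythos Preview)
Your overall architecture matches the paper's proof closely: weak limits and the limiting equation, alternative iii) via Lemma~\ref{lem:max_Princ}, Brezis--Lieb splitting $E_\mu(u_n)=E_\mu(u^0)+E_{\mu,\infty}(w_n)+o(1)$, and then a profile extraction via Lemma~\ref{Concom} to bound $E_{\mu,\infty}(w_n)$ from below. However, your scaling analysis in the last step contains a sign error in the exponent that reverses the trichotomy.

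With the rescaling you write, $\tilde w_k(x)=\alpha_k^{-(N-2)/2}w_{n_k}\big((x-x_k)/\alpha_k\big)$, a direct computation (substitute $w(y)=\alpha_k^{(N-2)/2}\tilde w(\alpha_k y+x_k)$ into $-\Delta w-\lambda^0 w=\mu|w|^{2^*-2}w$ and divide by $\alpha_k^{(N+2)/2}$) shows that the linear term in the equation for $\tilde w_k$ is $-\alpha_k^{-2}\lambda^0\tilde w_k$, not $-\alpha_k^{2}\lambda^0\tilde w_k$. Consequently the three cases go the other way around. The case $\alpha_k\to 0$ is ruled out because $\|\tilde w_k\|_2=\alpha_k\|w_{n_k}\|_2\to 0$, forcing the weak limit $\bar w$ to vanish (this is exactly the paper's Case~1). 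The case $\alpha_k\to\bar\alpha\in(0,\infty)$ is ruled out by Pohozaev as you say, but for $-\Delta\bar w+\bar\alpha^{-2}|\lambda^0|\bar w=\mu\bar w^{2^*-1}$. It is the case $\alpha_k\to+\infty$ that survives: there $\alpha_k^{-2}|\lambda^0|\tilde w_k\to 0$ in $H^{-1}$ (since $\alpha_k^{-2}\|\tilde w_k\|_2=\alpha_k^{-1}\|w_{n_k}\|_2\to 0$), and the weak limit $\bar w$ solves $-\Delta\bar w=\mu|\bar w|^{2^*-2}\bar w$ in $D^{1,2}$. So your claim ``I would show $\alpha_k\to 0$'' and your mechanism for excluding $\alpha_k\to\infty$ are both based on the wrong power of $\alpha_k$; once you swap them, the argument goes through.

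On the method: the paper does not run an iterated profile decomposition with cut-off bubbles. It extracts a \emph{single} nontrivial profile $u^1$, observes that $E_{\mu,\infty}$ is invariant under this $D^{1,2}$-rescaling, and uses weak lower semicontinuity of $\|\nabla\cdot\|_2^2$ together with the identity $\|\nabla\tilde u_{1,n}\|_2^2-\mu\|\tilde u_{1,n}\|_{2^*}^{2^*}=o(1)$ to get $E_{\mu,\infty}(w_n)=\tfrac1N\|\nabla\tilde u_{1,n}\|_2^2+o(1)\ge\tfrac1N\|\nabla u^1\|_2^2\ge\tfrac1N S^{N/2}\mu^{1-N/2}$. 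Your multi-bubble scheme would also yield \eqref{eq:split_ener} and has the advantage of cleanly separating the Dirichlet/critical energies from the residual $L^2$ mass, at the cost of the extra bookkeeping you anticipate (cut-off errors, orthogonality of profiles, termination of the iteration). For the statement at hand, though, only the single lower bound $E_{\mu,\infty}(w_n)\ge\tfrac1N S^{N/2}\mu^{1-N/2}+o(1)$ is needed, so the paper's shortcut is lighter.
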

\begin{proof}
Since $(u_n)_{n}$ is bounded in $H^{1}(\R^{N})$ and $\|u_n^{-}\|_{H^{1}} \to 0 $, there exists $u^{0} \geq 0$ such that (up to subsequences) $u_{n}\rightharpoonup u^{0} $ weakly in $H^{1}(\R^{N})$, 
strongly in $L^2_{\loc}(\R^N)$ and a.e. in $\R^N$. 
Moreover, since $\lambda_{n}$ is bounded, we can assume that  $\lambda_{n} \to \lambda^{0}\in\R$. By Lemma \ref{lem:max_Princ}, we know that, if $u^0\ge0$ is nontrivial, then $\lambda^{0} < 0$. Thus, we can deduce that $\lambda^{0} \geq 0$ yields that $u^{0} \equiv 0$, i.e. case iii) happens.

Hence, we can assume that $\lambda^{0} < 0$, and set $u_{1,n}=u_{n}-u^{0}$. Then, $u_{1,n}\rightharpoonup 0$ weakly in $H^{1}(\mathbb{R}^{N})$, strongly in $L^{2}_{\loc}(\mathbb{R}^{N})$ and a.e. in $\mathbb{R}^{N}$. Thus, by Lemma \ref{lem:VW_loss}, we have
\begin{equation*}\label{Split_2}
\int_{\mathbb{R}^{N}}V(x) u_{1,n}^{2}dx\rightarrow 0, \ \ \text{as } \ n\rightarrow+\infty.
\end{equation*}
By the Brezis-Lieb lemma, we have
\begin{equation}\label{BL_Eng}
  E_{\mu}(u_{n})=E_{\mu}(u^{0})+E_{\mu,\infty}(u_{1,n})+o(1),
\end{equation}
which implies that $(u_{1,n})_{n}$ is a (bounded) Palais-Smale sequence for $E_{\mu,\infty}$, defined as in \eqref{func_without_V} (recall Remark \ref{rmk:vanishing_V}). Thus, $(u_{1,n})_{n}$ satisfies
\begin{equation*}
  \|\nabla u_{1,n}\|^{2}_{2}-\lambda^{0}\|u_{1,n}\|^{2}_{2}=\mu \|u_{1,n}\|^{2^{*}}_{2^{*}}+o(1).
\end{equation*}
If $u_{1,n}\rightarrow 0$ in $H^{1}(\mathbb{R}^{N})$, we are in  case i) and the result follows. If not, we are left to 
show that case ii) holds, and we can assume that, up to subsequences, $\|u_{1,n}\|_{H^1(\R^N)}\geq d$ for a suitable constant $d>0$. Since $\lambda^{0}<0$ we obtain that 
there exists a constant $d_{1}>0$ such that, for  every $n \in \mathbb{N}$,
\begin{equation*}\label{crtical_item}
  \|u_{1,n}\|_{2^{*}}\geq d_{1}>0.
\end{equation*}
By Lemma \ref{Concom} there exist sequences $(\alpha_{n})_n$, $(x_{n})_n$, and a non-trivial $u^{1}\in H^1(\R^N)$ such that
\begin{equation*}\label{seq_dilation}
  \tilde{u}_{1,n}=\alpha_{n}^{-\frac{N-2}{2}}u_{1,n}\left(\frac{\cdot-x_{n}}{\alpha_{n}}\right)\rightharpoonup u^{1}
  \qquad\text{weakly in }H^{1}(\mathbb{R}^{N}).
\end{equation*}
By a sequence of translations we can assume $x_n=0$; moreover, direct computations show that
\begin{equation}\label{norm_dilation}
  \|\nabla \tilde{u}_{1,n}\|^{2}_{2}= \|\nabla u_{1,n}\|^{2}_{2}, \ \ \ \| \tilde{u}_{1,n}\|^{2^{*}}_{2^{*}}= \| u_{1,n}\|^{2^{*}}_{2^{*}}, \ \ \ \mbox{and } \ \| \tilde{u}_{1,n}\|^{2}_{2}= \alpha_{n}^{2}\| u_{1,n}\|^{2}_{2}.
\end{equation}
We have three different cases (up to subsequences). 

\underline{Case 1}: $\alpha_{n}\rightarrow 0$. Since $0\leq\|u_{1,n}\|^{2}_{2}\leq1$, \eqref{norm_dilation} yields
\begin{equation*}
  \| \tilde{u}_{1,n}\|^{2}_{2}= \alpha_{n}^{2}\| u_{1,n}\|^{2}_{2} \rightarrow 0, \ \ \mbox{as} \ \ {n}\rightarrow+\infty,
\end{equation*}
which means that $\tilde{u}_{1,n}\rightarrow 0$ strongly in $L^{2}(\mathbb{R}^{N})$. Therefore, we can deduce that $u^{1}=0$ a.e. in $\mathbb{R}^{N}$, which contradicts the fact that $u^{1}$ is not trivial.

\underline{Case 2}: $\alpha_{n}\rightarrow \bar{\alpha}\neq0$. We know that, for every $\varphi\in C^{\infty}_{0}(\R^N)$,
\begin{equation}\label{before_eq}
  \int_{\mathbb{R}^{N}}\nabla u_{1,n}\nabla \varphi\, dx= \mu \int_{\mathbb{R}^{N}} |u_{1,n}|^{2^{*}-2}u_{1,n} \varphi\, dx + \lambda^{0} \int_{\mathbb{R}^{N}} u_{1,n}\varphi\, dx+o\left(\|\varphi\|_{H^{1}}\right), \ \ \ \mbox{as} \ n\rightarrow +\infty.
\end{equation}
Fix $\psi(x) \in C^{\infty}_{0}(\mathbb{R}^{N})$ and define $\varphi_n(y)=\psi(\alpha_{n} y)$. Then, we have
\begin{equation*}\label{relation_varpsi}
  \|\varphi_{n}\|_{2}^{2}= \alpha_{n}^{-N}\|\psi\|^{2}_{2}, \ \ \ \|\nabla \varphi_{n}\|^{2}_{2}=\alpha_{n}^{2-N}\|\nabla\psi\|^{2}_{2}.
\end{equation*}
By \eqref{before_eq} we  deduce that
\begin{equation*}
\begin{aligned}
  &\int_{\mathbb{R}^{N}}\nabla \tilde{u}_{1,n}(x) \nabla \psi(x)dx -\mu  \int_{\mathbb{R}^{N}}|\tilde{u}_{1,n}(x)|^{2^{*}-2}\tilde{u}_{1,n}(x) \psi(x)dx\\
  =& \int_{\mathbb{R}^{N}}\alpha_{n}^{\frac{N}{2}} \nabla u_{1,n}(\frac{x}{\alpha_{n}})\cdot \nabla \psi(x)d(\frac{x}{\alpha_{n}})- \mu \int_{\mathbb{R}^{N}} \alpha_{n}^{\frac{N}{2}-1} |u_{1,n}(\frac{x}{\alpha_{n}})|^{2^{*}-2}u_{1,n}(\frac{x}{\alpha_{n}}) \psi(x)d(\frac{x}{\alpha_{n}})\\
  =& \alpha_{n}^{\frac{N}{2}-1}\left\{ \int_{\mathbb{R}^{N}}\nabla u_{1,n}(\frac{x}{\alpha_{n}})\cdot \nabla \varphi_{n}(\frac{x}{\alpha_{n}})d(\frac{x}{\alpha_{n}})- \mu \int_{\mathbb{R}^{N}}  |u_{1,n}(\frac{x}{\alpha_{n}})|^{2^{*}-2}u_{1,n}(\frac{x}{\alpha_{n}}) \varphi_{n}(\frac{x}{\alpha_{n}})d(\frac{x}{\alpha_{n}}) \right\}\\
  =&\alpha_{n}^{\frac{N}{2}-1}\left[\int_{\mathbb{R}^{N}}\nabla u_{1,n}\nabla \varphi_n\, dx- \mu \int_{\mathbb{R}^{N}} |u_{1,n}|^{2^{*}-2}u_{1,n} \varphi_n\, dx\right]\\
  =& \alpha_{n}^{\frac{N}{2}-1} \lambda^{0} \int_{\mathbb{R}^{N}} u_{1,n}\varphi_n\, dx+\alpha_{n}^{\frac{N}{2}-1}o\left(\|\varphi_n\|_{H^{1}}\right)\\ =& {\alpha_n}^{-2}\lambda^{0} \int_{\mathbb{R}^{N}} \tilde{u}_{1,n}\psi\, dx +o\left(\|\nabla \psi \|_{2} + \alpha_n^{-1}
  \|\psi \|_{2}\right).
  \end{aligned}
\end{equation*}
By weak convergence we can  pass to the limit and obtain that $u^1$ is a solution of
\[
-\Delta v = \mu |v|^{2^*-2}v + \bar{\alpha}^{-2}\lambda^{0}v\qquad\text{in }H^1(\R^N),
\] 
with $\bar{\alpha}^{-2}\lambda^{0} \neq 0$, which forces $u^{1}\equiv 0$, again a contradiction.

\underline{Case 3}: $\alpha_{n}\rightarrow + \infty$. Following the same steps as in Case 2, we find
\begin{equation*}
\begin{aligned}
\left| \int_{\mathbb{R}^{N}}\nabla \tilde{u}_{1,n} \nabla \psi\,dx -\mu  \int_{\mathbb{R}^{N}}|\tilde{u}_{1,n}|^{2^{*}-2}\tilde{u}_{1,n} \psi\,dx \right|&=\left|\alpha_{n}^{\frac{N}{2}-1} \lambda^{0} \int_{\mathbb{R}^{N}} u_{1,n}\varphi_n\, dx\right|+\alpha_{n}^{\frac{N}{2}-1}o\left(\|\varphi_n\|_{H^{1}}\right)
\\&\leq  \alpha_{n}^{\frac{N}{2}-1}\lambda^{0}\| u_{1,n}\|_{2}\|\varphi_n\|_{2}+\alpha_{n}^{\frac{N}{2}-1}o\left(\|\varphi_n\|_{H^{1}}\right)\\
 &\leq C\lambda^{0}\alpha_{n}^{-1}+o\left(\|\nabla \psi \|_{2} + \alpha_n^{-1}
  \|\psi \|_{2}\right) \rightarrow 0,
 \end{aligned}
\end{equation*}
as $\alpha_{n}\rightarrow + \infty$, 
by the H\"{o}lder inequality and $\| u_{1,n}\|_{2}\leq1$. 
Thus, we obtain that $\tilde{u}_{1,n}$ satisfies
\begin{equation}\label{eq_split}
  \int_{\mathbb{R}^{N}}|\nabla \tilde{u}_{1,n}|^{2}\,dx -\mu  \int_{\mathbb{R}^{N}}|\tilde{u}_{1,n}|^{2^{*}}\,dx=o(1),\qquad
  \text{ and }-\Delta u^1=\mu |u^1|^{2^{*}-2}u^1 .
\end{equation}
Hence, we have
\begin{equation*}
  E_{\mu,\infty}(\tilde{u}_{1,n})=\left(\frac{1}{2}-\frac{1}{2^{*}}\right)\|\nabla \tilde{u}_{1,n}\|_{2}^{2}+o(1)=\frac{1}{N}\|\nabla \tilde{u}_{1,n}\|_{2}^{2}+o(1).
\end{equation*}
Then, combining \eqref{eq_split} and the Sobolev inequality, we have
\begin{equation*}
 E_{\mu,\infty}(u_{1,n})= E_{\mu,\infty}(\tilde{u}_{1,n})=\frac{1}{N}\|\nabla \tilde{u}_{1,n}\|_{2}^{2}+o(1)\geq \frac{1}{N}\|\nabla u^{1}\|_{2}^{2}+o(1) \geq \frac{1}{N}S^{\frac{N}{2}}\mu^{1-\frac{N}{2}}+o(1),
\end{equation*}
which, together with \eqref{BL_Eng}, implies \eqref{eq:split_ener}.
\end{proof}

Finally, in the following lemma we show how a potential $V\in L^{r}(\R^{N})$ with 
$\frac{N}{2}\le r<+\infty$ can be split as $V\in L^{\frac{N}{2}}(\R^{N})+L^{\infty}(\R^{N})$, 
thus clarifying Remarks \ref{rmk:no_neg_sol_intro}, \ref{EX_po} (of course, an analogous result can 
be proved for $W\in L^r(\R^N)$ with $N\le r <+\infty$).
\begin{lemma}\label{lem:VinLr}
  Assume that $V\in L^{r}(\R^{N})$, with $\frac{N}{2}\le r< +\infty$. Then, for every $\delta>0$ there exist $V_{1}\in L^{\frac{N}{2}}(\R^{N})$ and $V_{2}\in L^{\infty}(\R^{N})$ such that
  \begin{enumerate}
  \item $\|V_{1}\|^{\frac{N}{2}}_{\frac{N}{2}}\leq 3\delta$,
  \item $V_{2}(x)\rightarrow0$ as $|x|\rightarrow \infty$.
  \end{enumerate}
\end{lemma}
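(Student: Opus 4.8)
The plan is to decompose the potential $V$ into a "large but bounded and decaying" part $V_2$ and a "small in $L^{N/2}$" part $V_1$, by a standard truncation-plus-tail argument. The key observation is that $L^r(\R^N)$ functions can be well approximated both by bounded functions (cutting off where $|V|$ is large) and by compactly supported functions (cutting off where $|x|$ is large), and that the critical integrability index here is $N/2 \le r$, so high values of $V$ contribute little in the $L^{N/2}$ norm after truncation, while the tail at infinity contributes little in the $L^r$ norm and hence, on its (still possibly unbounded) support, can be controlled.

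First I would fix $\delta>0$ and, since $V\in L^r(\R^N)$, choose a level $M>0$ large enough that $\int_{\{|V|>M\}}|V|^r\,dx$ is as small as we like; set $\tilde V_1 := V\,\ind{\{|V|>M\}}$ and note $\tilde V_1 \in L^r(\R^N)$ with small norm and $V - \tilde V_1 \in L^\infty(\R^N)$ with $\|V-\tilde V_1\|_\infty \le M$. This already handles the case $r = N/2$: take $V_1 = \tilde V_1$ (whose $L^{N/2}$ norm to the power $N/2$ is $\le \delta$, say, absorbing the constant $3$) and $V_2 = V - \tilde V_1$, which is bounded; to also get $V_2(x)\to 0$ as $|x|\to\infty$ I would further move the large-$|x|$ tail of $V$ into $V_1$: since $V\in L^{N/2}$, $\int_{\{|x|>R\}}|V|^{N/2}\,dx\to 0$ as $R\to\infty$, so redefining $V_1 := V\,\ind{\{|V|>M\}\cup\{|x|>R\}}$ and $V_2 := V - V_1 = V\,\ind{\{|V|\le M,\ |x|\le R\}}$ gives $V_1$ small in $L^{N/2}$, $V_2$ bounded and compactly supported (hence vanishing at infinity). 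For general $r > N/2$, the same $V_1$ lies in $L^r \subset L^{N/2}_{\mathrm{loc}}$, but one must check $V_1 \in L^{N/2}(\R^N)$: on $\{|V|>M\}$ one has $|V|^{N/2} \le M^{N/2 - r}|V|^r$ since $N/2 \le r$, so $V_1 \in L^{N/2}$ with norm controlled by $M^{N/2-r}$ times a (small) power of the $L^r$-tail; on the bounded region $\{|x|>R,\ |V|\le M\}$ one uses $|V|^{N/2} \le M^{N/2}$ only where helpful, but actually better to keep that piece in $V_2$ if it is bounded — the cleanest split for general $r$ is $V_1 := V\,\ind{\{|V|>M\}}$ plus $V\,\ind{\{|x|>R\}}$, both of which are in $L^{N/2}$ by the embedding-type inequality above (using $|V|\le M$ is not needed: on $\{|x|>R\}$ one has $|V|^{N/2}\le |V|^{N/2}\ind{|V|\le M} + M^{N/2-r}|V|^r\ind{|V|>M}$, and the first term is $\le M^{N/2}\cdot(\text{finite measure? no})$ — so instead split that region too by height). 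I would organize the three contributions — $\{|V|>M\}$, $\{|x|>R,\ |V|\le M\}$, leftover — so that each is either manifestly small in $L^{N/2}$ or manifestly bounded-and-compactly-supported, and allot at most $\delta$ to each of the (at most three) pieces of $V_1$, which is exactly why the bound $3\delta$ appears in the statement.

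The main obstacle — really the only subtlety — is reconciling the two competing truncations when $r > N/2$: the region $\{|x|>R\}$ need not have finite measure, so one cannot simply bound $\int_{\{|x|>R\}}|V|^{N/2}$ by $M^{N/2}|\{|x|>R\}|$; instead one must again split by height on that region, using $\int_{\{|x|>R,\,|V|\le M\}}|V|^{N/2}\le \int_{\{|x|>R\}}|V|^{N/2}$, which is \emph{not} a priori small because $V$ need not lie in $L^{N/2}$ globally when $r > N/2$. The resolution is to interpolate: on $\{|x|>R,\ |V|\le M\}$, $|V|^{N/2} \le M^{N/2-r}|V|^{r}$ (valid since $|V|\le M$ and $N/2\le r$ give $|V|^{N/2-r}\ge M^{N/2-r}$... wait, the inequality goes the wrong way) — more carefully, since $N/2 \le r$ and $|V|\le M$, we have $|V|^{N/2} = |V|^{r}\cdot|V|^{N/2-r} \le |V|^r\cdot M^{N/2-r}$ only if $N/2-r\le 0$ and $|V|\le M$, which gives $|V|^{N/2-r}\le M^{N/2-r}$ — and that is correct since $x\mapsto x^{N/2-r}$ is decreasing. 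Hence $\int_{\{|x|>R,\,|V|\le M\}}|V|^{N/2} \le M^{N/2-r}\int_{\{|x|>R\}}|V|^r \to 0$ as $R\to\infty$, which closes the argument. So the final recipe is: pick $M$ large so $\int_{\{|V|>M\}}|V|^r$ is tiny; then pick $R$ large so $\int_{\{|x|>R\}}|V|^r$ is tiny; set $V_2 := V\,\ind{\{|V|\le M,\ |x|\le R\}}$ (bounded by $M$, supported in $B_R$, hence vanishing at infinity) and $V_1 := V - V_2 = V\ind{\{|V|>M\}} + V\ind{\{|x|>R,\,|V|\le M\}}$, and estimate $\|V_1\|_{N/2}^{N/2} \le 2^{N/2-1}\big(\int_{\{|V|>M\}}|V|^{N/2} + \int_{\{|x|>R,\,|V|\le M\}}|V|^{N/2}\big) \le 2^{N/2-1}M^{N/2-r}\big(\int_{\{|V|>M\}}|V|^r + \int_{\{|x|>R\}}|V|^r\big)$, which can be made $\le 3\delta$ (indeed $\le \delta$) by the choices of $M$ then $R$. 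This completes the proof; all steps are elementary measure-theoretic truncation estimates and no further machinery is required.
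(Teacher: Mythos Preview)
Your argument works cleanly for $r = N/2$, but the general case $r > N/2$ contains a genuine error. The inequality you invoke on the set $\{|x|>R,\,|V|\le M\}$, namely $|V|^{N/2}\le M^{N/2-r}|V|^r$, is false: since the exponent $N/2-r$ is negative and $|V|\le M$, the map $t\mapsto t^{N/2-r}$ being decreasing gives $|V|^{N/2-r}\ge M^{N/2-r}$, not $\le$ (you noticed this yourself, then talked yourself out of it). In fact your $V_1$ need not lie in $L^{N/2}$ at all: take $V(x)=|x|^{-\alpha}$ for $|x|>1$ (and zero otherwise) with $N/r<\alpha<2$; then $V\in L^r(\R^N)$ but $\int_{\{|x|>R\}}|V|^{N/2}\,dx=+\infty$ for every $R>1$, so the tail you place in $V_1$ has infinite $L^{N/2}$ norm. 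For this $V$ one should instead take $V_2=V$ and $V_1=0$, i.e., the entire tail belongs in $V_2$ (it is bounded and decays); a construction that forces $V_2$ to be compactly supported cannot accommodate such potentials.

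The paper's remedy is to let the height threshold depend on $|x|$: cut $\R^N$ into annuli $\Omega_k$ on which $\int_{\Omega_k}|V|^r\le\delta/2^k$, and on $\Omega_k$ split at a height $\eta_k$ with $\eta_k\to 0$. On $\{V\ge\eta_k\}$ the \emph{correct} inequality $V^{N/2}\le\eta_k^{N/2-r}V^r$ holds (now $V$ lies above the threshold), and with the choice $\eta_k^{r-N/2}=1/k$ this gives $\int_{\Omega_k\cap\{V\ge\eta_k\}}V^{N/2}\le k\,\delta/2^k$, which sums to $2\delta$; meanwhile $V_2\le\eta_k\to 0$ on $\Omega_k$, so $V_2$ vanishes at infinity without being compactly supported. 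This variable-threshold idea is exactly what is missing from your argument.
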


\begin{proof}
We give the proof in the case $V\ge0$. In the general case, one can apply the same procedure to $V^+$ and $V^-$ separately.
 
  Case 1. If there exists $R>0$ such that $V\equiv 0$ outside $B_{R}$, we can find a constant $\eta>0$ 
  large enough such that, defining   
\[
\Omega_1 = \{x\in\R^N:V(x)\ge \eta\}, \quad \Omega_{2}= B_R \setminus  \Omega_{1}, 
\quad V_{i} = V \cdot \chi_{\Omega_{i}},
\]
then $V_{1}\in L^{\frac{N}{2}}(\R^N)$, $V_{2}\in L^{\infty}(\R^N)$, both vanishing outside $B_R$, and 
\begin{equation*}
  \|V_{1}\|^{\frac{N}{2}}_{\frac{N}{2}}\leq 3\delta
\end{equation*}
(we used $r\ge\frac{N}{2}$ and the  continuity of the measure $\mu: E \mapsto \int_E V^{N/2}dx$), and the result follows.
  
  Case 2. Assume that case 1 does not happen, and fix $\delta>0$. Using again continuity of measure, 
  there exists $R_{0}>0$ such that
\begin{equation*}
  \int_{\R^{N}\setminus \Omega_{0}}V^{r}dx\le \delta ,\ \ \ \ \mbox{with } \ \Omega_{0}=B_{R_{0}}.
\end{equation*}
Then, for $\eta_0>0$ to be chosen, define
\[
\Omega_{1,0} = \Omega_{0}\cap\{V\ge \eta_0\}, \quad \Omega_{2,0}= \Omega_0 \setminus  \Omega_{1,0}, 
\quad V_{i,0} = V \cdot \chi_{\Omega_{i,0}}.
\]
Then $V_{1,0}\in L^{\frac{N}{2}}(\R^N)$, $V_{2,0}\in L^{\infty}(\R^N)$, and, taking $\eta_{0}$ large enough,
\begin{equation*}
  \|V_{1,0}\|^{\frac{N}{2}}_{\frac{N}{2}}\leq \delta,\qquad \|V_{2,0}\|_{\infty}\leq \eta_0.
\end{equation*}
Then, we have that there exists $R_{1}$ such that
\begin{equation*}
  \int_{\Omega_{1}}V^{r}dx\le \frac{\delta}{2} \ \ \ \ \mbox{with } \ \Omega_{1}=B_{R_{1}}\setminus B_{R_{0}}.
\end{equation*}
Taking $\eta_{1}=1$, we define
\[
\Omega_{1,1} = \Omega_{1}\cap\{V\ge \eta_1\}, \quad \Omega_{2,1}= \Omega_1 \setminus  \Omega_{1,1}, 
\quad V_{i,1} = V \cdot \chi_{\Omega_{i,1}}.
\]
Notice that, since $V\ge 1$ on $\Omega_{1,1}$, 
\begin{equation*}
  \|V_{1,1}\|^{\frac{N}{2}}_{\frac{N}{2}}=\int_{\Omega_{1,1}} V^{\frac{N}{2}}dx\leq \int_{\Omega_{1,1}} V^{r}dx\leq \frac{\delta}{2} ,\qquad \|V_{2,1}\|_{\infty}\leq 1.
\end{equation*}
Recursively, for $k\geq2$, we can obtain that there exists $R_{k}$ such that
\begin{equation*}
  \int_{\Omega_{k}}V^{r}dx\le\frac{\delta}{2^{k}} \ \ \ \ \mbox{with } \ \Omega_{k}=B_{R_{k}}\setminus B_{R_{k-1}}.
\end{equation*}
Taking $\eta_{k}^{r-\frac{N}{2}}=\frac{1}{k}$ when $r>\frac{N}{2}$, or simply $\eta_k\to0^+$ if 
$r=\frac{N}{2}$, we define
\[
\Omega_{1,k} = \Omega_{k}\cap\{V\ge \eta_k\}, \quad \Omega_{2,k}= \Omega_k \setminus  \Omega_{1,k}, 
\quad V_{i,k} = V \cdot \chi_{\Omega_{i,k}}.
\]
By the definition of $\eta_k$ we have 
\begin{equation*}
  \|V_{1,k}\|^{\frac{N}{2}}_{\frac{N}{2}}=\int_{\Omega_{1,k}} V^{\frac{N}{2}}dx\leq
  \int_{\Omega_{1,k}} V^r\cdot \eta_k^{\frac{N}{2}-r}dx\leq k \int_{\Omega_{1,k}} V^{r}dx\leq k\frac{\delta}{2^k} ,\qquad \|V_{2,k}\|_{\infty}\leq \eta_k.
\end{equation*}

Finally, the sets $\Omega_{i,k}$ are pairwise disjoint and, since $R_{k}\rightarrow +\infty$ as $k \rightarrow +\infty$ (otherwise $V\equiv 0$ outside some ball), their union is $\R^N$. We infer
\begin{equation*}
  V_{2}=\sum_{k}V_{2,k}\in L^{\infty}(\R^{N}),\ \ \ \mbox{with } V_{2}(x)\rightarrow0 \ \ \mbox{as } \ |x|\rightarrow \infty
\end{equation*}
(since $\eta_k\to0$ as $k\to+\infty$), 
and $V_{1}=\sum_{k} V_{1,k}$ is such that
\begin{equation*}
  \int_{\R^{N}}V_{1}^{\frac{N}{2}}dx=\sum_{k}\int_{\Omega_{k}}V_{1,k}^{\frac{N}{2}}dx \leq \delta +\delta \sum_{k=1}^{+\infty}k2^{-k}=3\delta.\qedhere
\end{equation*}
\end{proof}

\begin{proof}[Proofs of Corollaries \ref{coro:coro1}, \ref{coro:coro2}]
To show that the corollaries are implied by Theorem \ref{Thm_neg}, we have to show that the assumptions 
therein imply the validity of  \eqref{AS_Positive}, \eqref{AS_N4GS1} and \eqref{AS_N4GS2}, for 
$\rho$ sufficiently small. To this aim, it is enough to obtain the decomposition in \eqref{def_pot}, with
$\|V_1\|_{\frac{N}{2}}$, $\|W_1\|_{N}$ sufficiently small. In the case of Corollary  \ref{coro:coro2} 
this immediately follows by applying twice Lemma \ref{lem:VinLr} to $V\in L^{r_1}(\R^{N})$, 
$W\in L^{r_2}(\R^{N})$, respectively, choosing $\delta$ small enough. On the other hand, for Corollary  
\ref{coro:coro1}, starting from a first decomposition  in \eqref{def_pot}, we can apply 
Lemma \ref{lem:VinLr} to $V_1\in L^{N/2}(\R^{N})$, $W_1\in L^{N}(\R^{N})$, again with $\delta$ 
sufficiently small. In this way, we can write $V_1=V_{1,1} + V_{1,2}$ and
\[
V= V_1 + V_2 = \tilde V_{1} + \tilde V_2,\qquad\text{with }\tilde V_1 = V_{1,1} \text{ and }
\tilde V_2 = V_{1,2} + V_2
\]
(and perform a similar decomposition for $W$).
\end{proof}

\section{Mountain pass geometry}\label{sec:MP_geo}

In this section we show that, recalling definitions \eqref{func:main1}, \eqref{manifold:1}, the energy 
functional $E_\mu$ has a mountain pass structure on the $L^2$ sphere $\Mcal$, at least under additional assumptions (beyond  \eqref{def_pot}, \eqref{Assump_Neg}). This will lead to the definition of candidate 
local minimum level $m_\mu$ and mountain pass level $c_\mu$.

Following \cite{MR1430506,MR4304693,MR4443784}, for every $u \in \mathcal{M}$ and $h > 0$, we define the scaling
\begin{equation*}
  u_{h}(x)= h\star u(x)= h^{\frac{N}{2}}u(hx),
\end{equation*}
so that $u_{h}\in \mathcal{M}$ for every $h>0$. Thus,
\begin{equation}\label{func_sca}
\begin{split}
  E_{\mu}(u_{h})&=\frac{h^{2}}{2}\int_{\mathbb{R}^{N}}|\nabla u|^{2}dx +\frac{1}{2}\int_{\mathbb{R}^{N}}V\left(x\right)  u_h^{2}\,dx-\frac{\mu h^{2^{*}}}{2^{*}}\int_{\mathbb{R}^{N}}|u|^{2^{*}}dx\\&=\frac{h^{2}}{2}\int_{\mathbb{R}^{N}}|\nabla u|^{2}dx +\frac{1}{2}\int_{\mathbb{R}^{N}}V\left(\frac{x}{h}\right) u^{2}dx-\frac{\mu h^{2^{*}}}{2^{*}}\int_{\mathbb{R}^{N}}|u|^{2^{*}}dx.
\end{split}
\end{equation}

Using Lemma \ref{ineq_VW},  we can show that $E_{\mu}$ possesses a mountain-pass geometric structure.

\begin{lemma}\label{geo}
Let $u \in \mathcal{M}$ be fixed. Then, we have
\begin{enumerate}
  \item\label{geo_1} $\|\nabla u_{h}\|_{2} \rightarrow 0$ and $E_{\mu}(u_{h}) \rightarrow 0$ as $h \rightarrow 0$,
  \item\label{geo_2} $\|\nabla u_{h}\|_{2} \rightarrow + \infty$ and $E_{\mu}(u_{h}) \rightarrow - \infty$ as $h \rightarrow + \infty$.
\end{enumerate}
\end{lemma}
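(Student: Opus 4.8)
The plan is to read off both claims directly from the scaling formula \eqref{func_sca}, handling the gradient term and the Sobolev term by homogeneity and controlling the potential term via Lemma~\ref{ineq_VW}. First I would record the exact scaling of the three pieces of $E_\mu(u_h)$: the Dirichlet term scales like $h^2\|\nabla u\|_2^2$, the Sobolev term like $h^{2^*}\mu\|u\|_{2^*}^{2^*}/2^*$, and the potential term I would split as $V=V_1+V_2$ according to \eqref{def_pot}, applying the first and third inequalities of Lemma~\ref{ineq_VW} to $u_h$. This gives
\[
\left|\int_{\R^N}V(x)u_h^2\,dx\right|\le S^{-1}\|V_1\|_{\frac N2}\|\nabla u_h\|_2^2+\|V_2\|_\infty\|u_h\|_2^2
= S^{-1}\|V_1\|_{\frac N2}h^2\|\nabla u\|_2^2+\|V_2\|_\infty,
\]
using $\|u_h\|_2=1$ and $\|\nabla u_h\|_2^2=h^2\|\nabla u\|_2^2$. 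The key structural point is that the potential contribution is controlled by a constant multiple of the Dirichlet term plus a bounded quantity, so it never dominates either the $h^2$ term (as $h\to0$) or the $h^{2^*}$ term (as $h\to\infty$), since $2^*>2$.

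For part~\eqref{geo_1}, $\|\nabla u_h\|_2=h\|\nabla u\|_2\to0$ as $h\to0$ is immediate. For the energy, all three terms in \eqref{func_sca} are bounded by a constant times $h^2$ (the Dirichlet and Sobolev terms trivially, the potential term by the estimate above, noting that for $h\le 1$ the $\|V_2\|_\infty$ contribution can be absorbed — or, more cleanly, observe $\int V(x)u_h^2\,dx=\int V(x/h)u^2\,dx\to0$ as $h\to0$ by dominated convergence, since $V(x/h)u(x)^2\to0$ a.e. and $|V(x/h)u^2|\le$ an $L^1$ majorant after splitting into $V_1,V_2$ parts). Either way $E_\mu(u_h)\to0$.

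For part~\eqref{geo_2}, again $\|\nabla u_h\|_2=h\|\nabla u\|_2\to+\infty$. For the energy I would write
\[
E_\mu(u_h)\le \frac{h^2}{2}\|\nabla u\|_2^2+\frac12 S^{-1}\|V_1\|_{\frac N2}h^2\|\nabla u\|_2^2+\frac12\|V_2\|_\infty-\frac{\mu h^{2^*}}{2^*}\|u\|_{2^*}^{2^*},
\]
and since $\|u\|_{2^*}>0$ and $2^*>2$, the negative term of order $h^{2^*}$ dominates the positive terms of order $h^2$ and the constant, so $E_\mu(u_h)\to-\infty$ as $h\to+\infty$. The only mild subtlety — not really an obstacle — is making sure the potential term is genuinely subordinate to the leading powers; this is exactly what the first inequality of Lemma~\ref{ineq_VW} guarantees, since it bounds the potential term by the Dirichlet term (order $h^2$) rather than by the Sobolev term. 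No dimensional restriction is needed, and no use of \eqref{Assump_Neg} is required for this lemma.
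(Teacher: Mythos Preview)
Your proposal is correct and essentially identical to the paper's: part~\ref{geo_2} is the paper's estimate verbatim, and for part~\ref{geo_1} the paper carries out your dominated-convergence idea by hand, splitting the $V_2$ term via its decay at infinity from \eqref{def_pot}. One minor point: $V_1\in L^{N/2}$ alone does not guarantee $V_1(x/h)\to0$ a.e., so apply dominated convergence only to the $V_2$ piece and keep your $S^{-1}\|V_1\|_{N/2}h^2\|\nabla u\|_2^2$ bound for $V_1$ (which you already have).
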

\begin{proof}
By Lemma \ref{ineq_VW} and \eqref{def_pot}, we have, for every $\eps>0$ there exist $R_\eps$ and $\bar{h}_{\eps}$ such that
\begin{equation*}
\|V_{2}\|_{L^{\infty}(\R^{N}\setminus B_{R_\eps})} \leq \eps,
\end{equation*}  
and, if $h \leq \bar{h}_{\eps}$,
\begin{equation*}
   \left|\int_{\mathbb{R}^{N}} V(x) u_{h}^{2}dx\right| \leq \left|\int_{\R^{N}}V_{1}u^{2}_{h}dx\right|+\left|\int_{B_{R_{\eps}}}V_{2}u^{2}_{h}dx\right|+\left|\int_{\R^{N}\setminus B_{R_{\eps}}}V_{2}u^{2}_{h}dx\right| \leq 3\eps.
\end{equation*}
Combining this with \eqref{func_sca}, we can conclude that \ref{geo_1} holds.

On the other hand, by \eqref{func_sca}, we infer that
\begin{equation*}
  E_{\mu}(u_{h})\leq \frac{1}{2}h^{2}\|\nabla u\|^{2}_{2}\left(1+S^{-1}\|V_{1}^{+}\|_{\frac{N}{2}}\right)+\frac{1}{2}\|V_{2}^{+}\|_{\infty}-\frac{\mu h^{2^{*}}}{2^{*}}\|u\|_{2^{*}}^{2^{*}},
\end{equation*}
which yields \ref{geo_2}.
\end{proof}

To proceed, we provide a lower estimate of $E_{\mu}$. Recall that $S$ is the best constant 
in the Sobolev embedding \eqref{ineq:sobolev}.
\begin{lemma}\label{est_inf}
Let us define
\[
\bar{R}=\bar{R}_\mu:=S^{\frac{N}{4}}\mu^{\frac{1}{2}-\frac{N}{4}},
\] 
and let us assume that assumption \eqref{AS_Positive} holds with $C_0$ such that 
\begin{equation}\label{eq:C0}
0<C_0<\frac{2S^{\frac{N}{2}}}{N\left(S^{\frac{N}{2}-1}+1\right)}.
\end{equation}
Then there exists $\eps>0$ small such that
\begin{equation*}
  E_{*}=\inf\{E_{\mu}:  \|u\|^{2}_{2}\le1, \ (1-\eps)\bar R^2\le \|\nabla u\|_{2}^2\le \bar{R}^2\}>0.
\end{equation*}
\end{lemma}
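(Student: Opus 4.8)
The idea is to show that on the annular region $\{(1-\eps)\bar R^2 \le \|\nabla u\|_2^2 \le \bar R^2\}$ (intersected with the ball $\|u\|_2^2\le 1$) the Sobolev critical term cannot yet overcome the Dirichlet term, so $E_\mu>0$ there. First I would bound the potential term from below using Lemma \ref{ineq_VW}: writing $V=V_1+V_2$ and $\|V_2^-\|_\infty\le \rho^{-2}C_0$ (recall $\mu=\rho^{2^*-2}$, so in the normalized problem the constraint $\|u\|_2^2\le1$ gives $\int V_2 u^2 \ge -\|V_2^-\|_\infty$), I get
\[
\frac12\int_{\R^N} V(x)u^2\,dx \ge -\frac12 S^{-1}\|V_1^-\|_{N/2}\|\nabla u\|_2^2 - \frac12 \rho^2\|V_2^-\|_\infty\cdot\rho^{-2} \ge -\frac12 S^{-1}C_0\|\nabla u\|_2^2 - \frac12 C_0.
\]
Wait — in the normalized variables the relevant smallness is \eqref{AS_Positive} itself, i.e. $\|V_1^-\|_{N/2}<C_0$ and $\rho^2\|V_2^-\|_\infty<C_0$; since $\|u\|_2^2\le 1$ the second term contributes at least $-\tfrac12\rho^2\|V_2^-\|_\infty>-\tfrac12 C_0$. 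Next I bound the critical term: by the Sobolev inequality \eqref{ineq:sobolev}, $\|u\|_{2^*}^{2^*}\le S^{-2^*/2}\|\nabla u\|_2^{2^*}$, so on the annulus $\|\nabla u\|_2^2\le\bar R^2 = S^{N/4}\mu^{1/2-N/4}$ we have $\|\nabla u\|_2^{2^*}\le \bar R^{2^*}$ and hence $\tfrac{\mu}{2^*}\|u\|_{2^*}^{2^*}\le \tfrac{\mu}{2^*}S^{-N/(N-2)}\bar R^{2^*}$. A direct computation with the definition of $\bar R$ shows $\tfrac{\mu}{2^*}S^{-N/(N-2)}\bar R^{2^*} = \tfrac1N S^{N/2}\mu^{1-N/2}\cdot(\text{something})$; more simply, the quantity $\tfrac12\bar R^2 - \tfrac{\mu}{2^*}S^{-N/(N-2)}\bar R^{2^*} = \tfrac1N\bar R^2>0$ because $\bar R^2$ is exactly the value at which the function $t\mapsto \tfrac12 t - \tfrac{\mu}{2^*}S^{-N/(N-2)}t^{2^*/2}$ changes monotonicity behaviour relative to $\tfrac1N t$.

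Putting the estimates together, on the annulus
\[
E_\mu(u) \ge \frac12\|\nabla u\|_2^2 - \frac12 S^{-1}C_0\|\nabla u\|_2^2 - \frac12 C_0 - \frac{\mu}{2^*}S^{-N/(N-2)}\|\nabla u\|_2^{2^*}.
\]
Using $\|\nabla u\|_2^2\ge (1-\eps)\bar R^2$ on the first two $\|\nabla u\|_2^2$ terms and $\|\nabla u\|_2^{2^*}\le\bar R^{2^*}$ on the last, and then the identity $\tfrac12\bar R^2 - \tfrac{\mu}{2^*}S^{-N/(N-2)}\bar R^{2^*}=\tfrac1N\bar R^2$, I obtain
\[
E_\mu(u)\ge \frac{1-\eps}{2}\bar R^2 - \frac{S^{-1}C_0}{2}\bar R^2 - \frac{C_0}{2} - \Big(\frac{\bar R^2}{2} - \frac{\bar R^2}{N}\Big)
= \frac{\bar R^2}{N} - \frac{\eps}{2}\bar R^2 - \frac{S^{-1}C_0}{2}\bar R^2 - \frac{C_0}{2}.
\]
The bound \eqref{eq:C0}, namely $C_0 < \tfrac{2S^{N/2}}{N(S^{N/2-1}+1)}$, is precisely what makes $\tfrac{S^{-1}C_0}{2}\bar R^2 + \tfrac{C_0}{2} < \tfrac{\bar R^2}{N}$: indeed $\tfrac{S^{-1}}{2}\bar R^2 + \tfrac12 = \tfrac{\bar R^2 + S}{2S}$ and one checks $\tfrac{\bar R^2+S}{2S}\cdot C_0 < \tfrac{\bar R^2}{N}$ reduces, after using $\bar R^2 = S^{N/4}\mu^{1/2-N/4}$ (so that actually the worst case is captured by comparing with $S$; here one should double-check the homogeneity, but the structure of \eqref{eq:C0} is tailored to it), to exactly the stated inequality. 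Hence there is a strictly positive gap, and choosing $\eps>0$ small enough that $\tfrac{\eps}{2}\bar R^2$ is smaller than this gap yields $E_* \ge \delta_0 >0$ for some $\delta_0$ depending only on $N$, $S$, $\mu$, $C_0$.

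The main obstacle is bookkeeping the scaling/homogeneity correctly: verifying that the constant in \eqref{eq:C0} is exactly the one that survives after substituting $\bar R^2 = S^{N/4}\mu^{1/2-N/4}$ into $\tfrac{S^{-1}C_0}{2}\bar R^2 + \tfrac{C_0}{2}<\tfrac{\bar R^2}{N}$, and checking that the $\mu$-dependence is consistent (the term $\tfrac{C_0}{2}$ has no $\bar R^2$ factor, so one must confirm that dividing through by $\bar R^2$ really produces the dimensionless bound $\tfrac{S^{-1}C_0}{2} + \tfrac{C_0}{2\bar R^2} < \tfrac1N$, and that $\bar R^2\ge S$ — or whatever the relevant inequality is — holds in the regime of interest, perhaps absorbing a harmless factor into $C_0$). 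Everything else is a routine combination of Lemma \ref{ineq_VW}, the Sobolev inequality, and the elementary one-variable identity at $t=\bar R^2$.
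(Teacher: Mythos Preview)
Your approach is exactly the paper's: bound the potential term below via Lemma \ref{ineq_VW}, bound the critical term above by Sobolev, and use the identity $\tfrac12\bar R^2-\tfrac{\mu}{2^*}S^{-N/(N-2)}\bar R^{2^*}=\tfrac1N\bar R^2$ (which holds because $\bar R^2=S^{N/2}\mu^{1-N/2}$, not $S^{N/4}\mu^{1/2-N/4}$ as you wrote).

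The ``main obstacle'' you flag is real but easy to fix, and it is the only gap. From $\rho^2\|V_2^-\|_\infty<C_0$ and $\|u\|_2^2\le1$ you obtain $\tfrac12\int V_2^- u^2\le\tfrac12\|V_2^-\|_\infty<\tfrac{C_0}{2}\rho^{-2}$, \emph{not} $\tfrac{C_0}{2}$; dropping the factor $\rho^{-2}$ is what destroys the homogeneity in your final line. Since $\mu=\rho^{2^*-2}$ gives $\rho^{-2}=\mu^{1-N/2}=\bar R^2/S^{N/2}$, the corrected lower bound reads
\[
E_\mu(u)\;\ge\;\bar R^2\left[\frac1N-\frac{\eps}{2}-\frac{C_0}{2}\Bigl(S^{-1}+S^{-N/2}\Bigr)\right]
\;=\;\bar R^2\left[\frac1N-\frac{\eps}{2}-\frac{C_0}{2S^{N/2}}\bigl(S^{N/2-1}+1\bigr)\right],
\]
and \eqref{eq:C0} is precisely the statement $\tfrac{C_0}{2S^{N/2}}(S^{N/2-1}+1)<\tfrac1N$, so the bracket is strictly positive for $\eps$ small. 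No auxiliary assumption like ``$\bar R^2\ge S$'' is needed; once the $\rho^{-2}$ factor is restored the bound is dimensionless and matches \eqref{eq:C0} exactly.
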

\begin{proof}
Let $u\in H^1(\R^N)$ satisfy
\[
\|u\|^{2}_{2}\le1,\qquad \ (1-\eps)\bar R^2\le \|\nabla u\|_{2}^2\le \bar{R}^2
\]
with $\eps>0$ small to be fixed. By Lemma \ref{ineq_VW} 
we have
 \begin{equation*}
 \begin{aligned}
   E_{\mu}(u)&\geq \frac{1}{2}\|\nabla u\|^{2}_{2}-\frac{\mu}{2^{*}}\| u\|_{2^*}^{2^{*}}-\frac{1}{2}\int_{\R^{N}}\left[V^{-}_{1}+V^{-}_{2}\right]u^{2}dx\\&\geq \frac{1}{2}\|\nabla u\|^{2}_{2}-\frac{\mu}{2^{*}}S^{-\frac{N}{N-2}}\|\nabla u\|_{2}^{2^{*}}-\frac{1}{2} S^{-1}\|V_{1}^{-}\|_{\frac{N}{2}}\|\nabla u\|_{2}^2-\frac{1}{2}\|V_{2}^{-}\|_{\infty}\| u\|_{2}^2\\
   & \geq\left(\frac{1}{N}- \frac{\eps}{2}\right)S^{\frac{N}{2}}\mu^{1-\frac{N}{2}}-\frac{1}{2} S^{\frac{N}{2}-1}\|V_{1}^{-}\|_{\frac{N}{2}}\mu^{1-\frac{N}{2}}-\frac{1}{2}\|V_{2}^{-}\|_{\infty} .
    \end{aligned}
 \end{equation*} 
 Now, by \eqref{AS_Positive} and \eqref{trans}, we have
 \begin{equation*}
   \frac{1}{2} S^{\frac{N}{2}-1}\|V_{1}^{-}\|_{\frac{N}{2}}+\frac{1}{2}\mu^{\frac{N}{2}-1}\|V_{2}^{-}\|_{\infty}\le \frac{C_0}{2}\left(S^{\frac{N}{2}-1}+1\right)\le \left(\frac{1}{N}- \frac{\eps}{2}\right)S^{\frac{N}{2}}-\delta,
 \end{equation*}
 for small positive constants $\delta$, $\eps$ (notice that this is possible since $C_0$ 
 satisfies \eqref{eq:C0}).  This means
 \begin{equation*}
  E_{\mu}(u)\geq \delta\mu^{1-\frac{N}{2}}>0,
\end{equation*}
concluding the proof.
\end{proof}

Based on the previous lemma, we define
\begin{equation}\label{level_min}
  m_{\mu}:=\inf\{E_{\mu}:\ u\in \mathcal{M},\|\nabla u\|_{2}\leq \bar{R}\}.
\end{equation}
In particular, if $m_{\mu}<E_*$, then it is a candidate local minimum level.
Now, by Lemmas \ref{geo} and \ref{est_inf}, we fix $w \in \mathcal{M}$ with 
\begin{equation*}
 \|\nabla w\|^{2}_{2}=\bar{R}^{2}= S^{\frac{N}{2}}\mu^{1-\frac{N}{2}}, 
\end{equation*}
thus, there exist $0<h_{0}<1<h_{1}$ such that
\begin{equation}\label{eq:Rbar}
\begin{split}
  &\|\nabla w_{h_{0}}\|_{2}< \bar{R}, \ \  \ \  E_{\mu}(w_{h_{0}})< E_{*}, \\
  &\|\nabla w_{h_{1}}\|_{2}> \bar{R}, \ \  \ \  E_{\mu}(w_{h_{1}})< 0.
\end{split}
\end{equation}
Then, we define in a standard way the candidate mountain pass value
\begin{equation}\label{level_Moun}
  c_{\mu}:=\inf_{\gamma \in\Gamma}\sup_{t\in[0,1]}E_{\mu}(\gamma(t)), \ \ \ \ \mbox{where } \Gamma:=\{\gamma\in C([0,1],\mathcal{M}),\gamma(0)=w_{h_{0}},\gamma(1)=w_{h_{1}}\}.
\end{equation}
Notice that, by Lemma \ref{est_inf},
\begin{equation}\label{eq:positive_c_mu}
c_\mu\ge E_*>0.
\end{equation}


\section{Ground state solution (proof of Theorem \ref{Thm_neg})}\label{sec:GS}

This section is devoted to the proof of Theorem \ref{Thm_neg}, or, better, to a version of such theorem 
involving $E_\mu$ on $\Mcal$, via the change of variables \eqref{trans}. Throughout the section, we assume that assumptions \eqref{def_pot}, \eqref{Assump_Neg} and \eqref{AS_Positive} hold true. We first aim to prove the existence of the local minimizer. Then, under further assumptions, we prove that the local minimizer is the ground state, according to Definition \ref{def:GS}. To begin with, we obtain the following upper bound estimate for 
$m_{\mu}$, defined as in \eqref{level_min}.
\begin{lemma}\label{lem:negative_m}
  Under assumption \eqref{Assump_Neg} we have $m_{\mu}<0$.
\end{lemma}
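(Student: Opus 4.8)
The plan is to exploit assumption \eqref{Assump_Neg} directly: recall from Remark \ref{rem:LL} that there exist $\tilde\lambda_1<0$ and $\psi_1\in\Mcal$, $\psi_1\ge0$, realizing the infimum in \eqref{Assump_Neg}, so that
\[
\int_{\R^N}\bigl(|\nabla\psi_1|^2+V(x)\psi_1^2\bigr)\,dx=\tilde\lambda_1<0.
\]
I would use $\psi_1$ (or, if needed, any $\varphi\in\Mcal$ with $\int(|\nabla\varphi|^2+V\varphi^2)<0$) as a test function, and consider the scaled family $h\star\psi_1$, which stays on $\Mcal$ for every $h>0$.

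First I would write out $E_\mu(h\star\psi_1)$ using \eqref{func_sca}. The point is that as $h\to0^+$ the kinetic term $\tfrac{h^2}{2}\|\nabla\psi_1\|_2^2$ and the Sobolev term $\tfrac{\mu h^{2^*}}{2^*}\|\psi_1\|_{2^*}^{2^*}$ both vanish, and, by Lemma \ref{geo}\ref{geo_1} (more precisely by the estimate in its proof using \eqref{def_pot} and Lemma \ref{ineq_VW}), the potential term $\tfrac12\int_{\R^N}V(x/h)\psi_1^2\,dx$ also tends to $0$. Hence $E_\mu(h\star\psi_1)\to0$ as $h\to0^+$, which by itself only gives $m_\mu\le0$. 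To get the strict inequality I would instead expand near $h=1$: one computes
\[
E_\mu(h\star\psi_1)=\frac{h^2}{2}\|\nabla\psi_1\|_2^2+\frac12\int_{\R^N}V(x/h)\psi_1^2\,dx-\frac{\mu h^{2^*}}{2^*}\|\psi_1\|_{2^*}^{2^*},
\]
and at $h=1$ the first two terms combine to $\tfrac12\tilde\lambda_1<0$, so $E_\mu(1\star\psi_1)=E_\mu(\psi_1)=\tfrac12\tilde\lambda_1-\tfrac{\mu}{2^*}\|\psi_1\|_{2^*}^{2^*}<0$. Thus it suffices to check that $\psi_1$ (or a small-$h$ rescaling of it) lies in the admissible set for $m_\mu$, i.e. that $\|\nabla(h\star\psi_1)\|_2=h\|\nabla\psi_1\|_2\le\bar R$ for a suitable choice of $h\le1$; since $\bar R=\bar R_\mu>0$ is a fixed positive number and $h\mapsto h\|\nabla\psi_1\|_2$ is continuous with limit $0$ as $h\to0^+$, there is $h_*\in(0,1]$ with $h_*\|\nabla\psi_1\|_2\le\bar R$, and for $h$ slightly below $h_*$ one still has, by continuity, $E_\mu(h\star\psi_1)<0$ (using that $E_\mu(h\star\psi_1)\to\tfrac12\tilde\lambda_1-\tfrac{\mu}{2^*}\|\psi_1\|_{2^*}^{2^*}<0$ as $h\to1^-$ if $h_*=1$, or more robustly that $E_\mu(h\star\psi_1)$ is negative on a whole interval of $h$'s accumulating somewhere in $(0,1]$, as one sees from the explicit formula). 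Therefore $m_\mu\le E_\mu(h\star\psi_1)<0$.

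The only genuinely delicate point is making sure the negativity survives the constraint $\|\nabla u\|_2\le\bar R$: if $\|\nabla\psi_1\|_2>\bar R$ we cannot use $\psi_1$ itself and must rescale, and then we must verify that the rescaled functional is still negative. I would handle this by noting that for $h$ in a left neighbourhood of the value $h_\sharp:=\bar R/\|\nabla\psi_1\|_2$ (or $h_\sharp=1$, whichever is smaller) the dominant balance in the explicit expression for $E_\mu(h\star\psi_1)$ is still governed by the negative quantity coming from \eqref{Assump_Neg} — concretely, one can first fix $h$ close enough to $\min\{1,h_\sharp\}$ that the kinetic-plus-potential part is strictly negative (its limit as $h\to1$ is $\tfrac12\tilde\lambda_1$, and for general $h$ one estimates the potential term via Lemma \ref{ineq_VW} and \eqref{def_pot} exactly as in the proof of Lemma \ref{geo}), and the $-\tfrac{\mu h^{2^*}}{2^*}\|\psi_1\|_{2^*}^{2^*}$ term only helps. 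This yields $m_\mu<0$, completing the proof.
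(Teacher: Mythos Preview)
There is a genuine gap in your treatment of the case $\|\nabla\psi_1\|_2>\bar R$. The mass-preserving dilation $h\star\psi_1$ replaces the potential integral by $\int V(x/h)\psi_1^2\,dx$, over which you have no control when $h$ is bounded away from $1$; in particular, your continuity remark ``its limit as $h\to1$ is $\tfrac12\tilde\lambda_1$'' is irrelevant once the admissible range is restricted to $h\le h_\sharp:=\bar R/\|\nabla\psi_1\|_2<1$. Worse: since $h_\sharp\star\psi_1\in\Mcal$ and $\|\nabla(h_\sharp\star\psi_1)\|_2=\bar R$, Lemma \ref{est_inf} (in force throughout the section via \eqref{AS_Positive}) gives $E_\mu(h_\sharp\star\psi_1)\ge E_*>0$, so by continuity $E_\mu(h\star\psi_1)>0$ for all $h$ just below $h_\sharp$ --- the opposite of what you claim. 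Combined with $E_\mu(h\star\psi_1)\to0$ as $h\to0^+$, you have no admissible $h\in(0,h_\sharp]$ at which negativity is established.

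The paper avoids this by using \emph{scalar} multiplication $t\psi_1$ instead of the dilation $h\star\psi_1$. The point is that under scalar multiplication the potential integral simply scales by $t^2$, so $E_\mu(t\psi_1)=\tfrac{t^2}{2}\tilde\lambda_1-\tfrac{\mu t^{2^*}}{2^*}\|\psi_1\|_{2^*}^{2^*}<0$ for every $t>0$. Taking $\bar t=\bar R/\|\nabla\psi_1\|_2$ one gets $\|\nabla(\bar t\psi_1)\|_2=\bar R$ with $E_\mu(\bar t\psi_1)<0$; since Lemma \ref{est_inf} applies to all $u$ with $\|u\|_2\le1$ (not just $u\in\Mcal$), this forces $\|\bar t\psi_1\|_2=\bar t>1$, i.e.\ $\|\nabla\psi_1\|_2<\bar R$. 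Hence $\psi_1$ itself lies in the admissible set for $m_\mu$, and $m_\mu\le E_\mu(\psi_1)<0$. The missing idea in your argument is precisely this off-constraint scaling combined with Lemma \ref{est_inf} to rule out the bad case $\|\nabla\psi_1\|_2>\bar R$ a priori.
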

\begin{proof}
By \eqref{Assump_Neg}, following the same steps as in \cite[Section 4.1]{MR4443784}, we can obtain the result.

 Indeed, let $\psi_1 \in \mathcal{M}$, $\tilde{\lambda}_{1}<0$ be as in Remark \ref{rem:LL}. Thus, for every $t>0$, we have
\begin{equation*}
E_{\mu}(t \psi_1) \leq \frac12 \tilde{\lambda}_{1}-\frac{t^{2^{*}}\mu}{2^{*}}\|\psi_1\|^{2^{*}}_{2^{*}}<\frac12 \tilde{\lambda}_{1}<0.
\end{equation*}
Taking $\bar{t}=\frac{\bar{R}}{\|\nabla \psi_1\|_{2}}$, with $\bar R$ as in Lemma \ref{est_inf}, we have $\|\nabla \bar{t} \psi_1\|_{2}=\bar{R}$ and $E_{\mu}(\bar{t} \psi_1) <0$, which, combined with   
Lemma \ref{est_inf}, yields that
\begin{equation*}
\|\bar{t}\psi_1\|_{2}=\bar{t}>1.
\end{equation*}
Then, by the definition of $\bar{t}$, we obtain that 
\[
\|\nabla \psi_1\|_{2}= \frac{\bar{R}}{\bar{t}}<\bar{R}.
\]
Thus, since $\psi_1 \in \mathcal{M}$ and $\|\nabla \psi_1\|_{2}< \bar{R}$, we have
\begin{equation*}
m_{\mu}\leq E_{\mu}(\psi_1)<0. \qedhere
\end{equation*} 
\end{proof}

Next we are going to show that $m_{\mu}$ is a local minimum level. 
\begin{lemma}\label{lem:loc_minimizer}
If \eqref{Assump_Neg} and \eqref{AS_Positive} hold true, with $C_0$ as in Lemma \ref{est_inf}, then $m_\mu$ is achieved.
\end{lemma}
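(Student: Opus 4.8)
The plan is to show that a minimizing sequence for $m_\mu$ converges strongly in $H^1(\R^N)$ to a minimizer, by ruling out the two ``bad'' alternatives in Lemma \ref{cor:split}. First I would take a minimizing sequence $(v_n)_n \subset \Mcal$ with $\|\nabla v_n\|_2 \le \bar R$ and $E_\mu(v_n) \to m_\mu < 0$. Since $m_\mu < 0$ (Lemma \ref{lem:negative_m}) and $E_*>0$ (Lemma \ref{est_inf}), for large $n$ we actually have $\|\nabla v_n\|_2^2 \le (1-\eps)\bar R^2$, so the constraint $\|\nabla v_n\|_2 \le \bar R$ is not active and $(v_n)_n$ sits in the interior of the sublevel region. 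A standard truncation/Ekeland variational principle argument on the (complete) metric space $\{u \in \Mcal : \|\nabla u\|_2 \le \bar R\}$ then produces a new minimizing sequence, still denoted $(v_n)_n$, which is a Palais--Smale sequence for $E_\mu$ constrained to $\Mcal$, i.e. $\|\nabla_{\Mcal} E_\mu(v_n)\| \to 0$; moreover one may replace $v_n$ by $|v_n|$ to get $\|v_n^-\|_{H^1} \to 0$ (indeed $v_n^- \equiv 0$).

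Next I would verify the remaining hypotheses of Lemma \ref{cor:split}: boundedness of $(v_n)_n$ in $H^1$ is immediate since $\|\nabla v_n\|_2 \le \bar R$ and $\|v_n\|_2 = 1$; boundedness of the Lagrange multipliers $\lambda_n$ follows from the standard formula $\lambda_n = \|\nabla v_n\|_2^2 + \int V v_n^2 - \mu\|v_n\|_{2^*}^{2^*} + o(1)$ together with Lemma \ref{ineq_VW} and the gradient bound. So Lemma \ref{cor:split} applies: up to a subsequence $v_n \rightharpoonup v^0 \ge 0$, $\lambda_n \to \lambda^0$, and one of the three alternatives holds. Alternative iii) ($\lambda^0 \ge 0$, $v^0 \equiv 0$) is excluded because then $v_n \rightharpoonup 0$, hence $\int V v_n^2 \to 0$ by Lemma \ref{lem:VW_loss}, and also $\|\nabla v_n\|_2^2/2 - (\mu/2^*)\|v_n\|_{2^*}^{2^*} = E_\mu(v_n) + o(1) \to m_\mu < 0$; but on the region $\|\nabla v_n\|_2^2 \le \bar R^2 = S^{N/2}\mu^{1-N/2}$ the Sobolev inequality gives $\|\nabla u\|_2^2/2 - (\mu/2^*)\|u\|_{2^*}^{2^*} \ge \frac1N S^{N/2}\mu^{1-N/2} \cdot (\text{something} \ge 0) \ge 0$ (this is exactly the computation already used in Lemma \ref{est_inf}), a contradiction.

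Alternative ii) is excluded by an energy comparison: there $E_\mu(v_n) \ge E_\mu(v^0) + \frac1N S^{N/2}\mu^{1-N/2} + o(1)$, so $m_\mu \ge E_\mu(v^0) + \frac1N S^{N/2}\mu^{1-N/2}$. But $v^0$ is a nontrivial critical point (it solves the limit equation with $\lambda^0 < 0$) with $\|v^0\|_2^2 \le 1$, so either $v^0/\|v^0\|_2 \in \Mcal$ has gradient norm $\le \bar R$ --- giving $E_\mu(v^0) \ge$ a value comparable to $m_\mu$ after rescaling back --- or more simply one notes $E_\mu(v^0) \ge E_{\mu,\infty}(v^0) - \frac12\int V^- (v^0)^2$, and the structural bound on the constraint together with the smallness assumption \eqref{AS_Positive} forces $E_\mu(v^0) > -\frac1N S^{N/2}\mu^{1-N/2}$, contradicting $m_\mu < 0$. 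Thus only alternative i) survives: $v_n \to v^0$ strongly in $H^1$, so $v^0 \in \Mcal$, $\|\nabla v^0\|_2 \le \bar R$, and $E_\mu(v^0) = m_\mu$; hence $m_\mu$ is achieved. I expect the main obstacle to be alternative ii): one must carefully quantify, using only the constraint $\|\nabla v^0\|_2 \le \bar R$ (or rather the corresponding bound inherited from the sequence) and the smallness of $V^-$, that the energy of any nontrivial limit $v^0$ cannot drop below $-\frac1N S^{N/2}\mu^{1-N/2}$ --- this is where the precise choice of the threshold $C_0$ in \eqref{eq:C0} and the level $\bar R$ enter decisively, and it is the step most likely to require the delicate estimate rather than a soft argument.
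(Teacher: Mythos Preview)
Your proposal is correct and follows the paper's strategy: Ekeland on a nonnegative minimizing sequence (which sits strictly inside $\{\|\nabla u\|_2\le\bar R\}$ by Lemma~\ref{est_inf}), then Lemma~\ref{cor:split}, ruling out alternatives ii) and iii). Two minor differences in the sub-arguments are worth noting. For iii), the paper does not use nonnegativity of $E_{\mu,\infty}$ on $\{\|\nabla u\|_2\le\bar R\}$; instead it combines $E_\mu(u_n)\to m_\mu<0$ with the equation to get $0>m_\mu+o(1)\ge \tfrac{1}{2^*}\lambda^0+o(1)$, forcing $\lambda^0<0$ directly. For ii), the paper takes your first-mentioned route rather than your ``more simply'' one: with $a=\|u^0\|_2\in(0,1]$ it shows $\|\nabla(u^0/a)\|_2<\bar R$ (Lemma~\ref{est_inf} is used as a barrier: if not, some intermediate $\bar t u^0$ would have $\|\nabla(\bar t u^0)\|_2=\bar R$, $\|\bar t u^0\|_2<1$, $E_\mu(\bar t u^0)<0$, contradicting $E_*>0$), whence $E_\mu(u^0)\ge a^2 E_\mu(u^0/a)\ge m_\mu$, contradicting \eqref{eq:split_ener}. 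Your direct bound $E_\mu(u^0)>-\tfrac1N S^{N/2}\mu^{1-N/2}$ (which is exactly the estimate of Lemma~\ref{est_inf} read at $u^0$) also works and is arguably cleaner, so your anticipated ``main obstacle'' is less delicate than you feared.
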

\begin{proof}
Let $(v_{n})_{n}$ be a minimizing sequence for $m_\mu$. Since $E_{\mu}$ and $\mathcal{M}$ are even, we can assume that $v_{n}\geq 0$. Moreover, by Lemma \ref{est_inf}, we know $\|\nabla v_{n}\|_{2}^2 \le (1-\eps)\bar{R}^2$ for every $n$. Then, by Ekeland's principle, we can find a Palais-Smale sequence $(u_{n})_{n}$ for $E_{\mu}$ constrained to 
$\mathcal{M}$ at level $m_{\mu}$ such that
\begin{equation*}
  \|u_{n}-v_{n}\|_{H^{1}} \to 0,
\end{equation*}
as $n \to +\infty$. In particular, as $n\to +\infty$,
\begin{equation*}\label{PS_Mini}
  E_{\mu}(u_{n})\to m_{\mu},\qquad \nabla_{\mathcal{M}}E_{\mu}(u_{n}) \to 0,\qquad \|u_{n}^-\|_{H^{1}} \to 0.
  \end{equation*}
 Also, since
$(u_{n})_{n}$ is bounded, 
\begin{equation*}\label{bound_Mlad}
  \lambda_{n}:= \int_{\mathbb{R}^{N}}|\nabla u_n|^{2}dx + \int_{\mathbb{R}^{N}}V(x) u_n^{2}dx-
  \mu\int_{\mathbb{R}^{N}}|u_n|^{2^{*}}dx\ \ \  \text{is bounded too.}
\end{equation*}
Therefore we are in a position to apply Lemma \ref{cor:split}, obtaining that, up to subsequences, $u_{n}\rightharpoonup u^{0} \ge 0$ weakly in $H^{1}(\mathbb{R}^{n})$, strongly in $L^2_{\loc}(\R^N)$ and almost everywhere, $\lambda_{n} \to \lambda^{0}\in \mathbb{R}$, and $u^0$ weakly solves
\[
-\Delta u^0 +V(x)u^{0}= \mu(u^{0})^{2^{*}-1} +  \lambda^{0} u^0.
\] 
Moreover, by weak lower semicontinuity of the norm, $\| u^0\|_{2} \leq 1$ and $\|\nabla u^0\|_{2} 
< \bar{R}$.

Now, we claim that $\lambda^{0} < 0$. Indeed, if $u^0\ge0$ is nontrivial then this is the case by Lemma \ref{lem:max_Princ}. On the other hand, let $u^0 \equiv 0$. In this case, Lemma \ref{lem:VW_loss} implies that, as $n\rightarrow+\infty$,
\[
\int_{\mathbb{R}^{N}}V(x) u_{n}^{2}dx\rightarrow 0,
\]
so that Lemma \ref{lem:negative_m} yields
\[
\begin{split}
0> m_\mu + o(1) &=  \frac12\int_{\mathbb{R}^{N}}|\nabla u_n|^{2}dx + \frac12\int_{\mathbb{R}^{N}}V(x) u_n^{2}dx-
  \frac{\mu}{2^*}\int_{\mathbb{R}^{N}}|u_n|^{2^{*}}dx\\
  &= \frac1N\int_{\mathbb{R}^{N}}|\nabla u_n|^{2}dx + \frac1N\int_{\mathbb{R}^{N}}V(x) u_n^{2}dx 
  +\frac{1}{2^*}\lambda_n\\
  &\ge \frac{1}{2^*}\lambda^{0} + o(1),
\end{split}
\]
and $\lambda^{0}<0$ also in this case.

To conclude the proof, we are going to show that $u_n\to u^0$ strongly in $\Mcal$, so that $m_{\mu}$ 
is achieved. We argue by contradiction, assuming that the convergence is only weak. 
Since $\lambda^{0}<0$ we are in case (ii) of Lemma \ref{cor:split}, and we infer
\begin{equation}\label{ener_minsplit}
  0 > m_{\mu} \geq E_{\mu}(u^{0})+\frac{1}{N}S^{N/2}\mu^{1-\frac{N}{2}},
\end{equation}
and, in particular, $u^0$ is not trivial. 
Let $a:=\|u^{0}\|_{2}$. If $a=1$, by definition of $m_{\mu}$, we have $E_{\mu}(u^{0})\geq m_{\mu}$, which contradicts \eqref{ener_minsplit}. 

On the other hand, if $0<a<1$, by Lemma \ref{est_inf}, we claim that $\|\nabla \frac{u^{0}}{a}\|_{2}< \bar{R}$. If not, we can assume that there exists $\bar{t} \in (1,\frac{1}{a})$ such that $\|\nabla \bar{t} u^{0}\|_{2}=\bar{R}$, $\|\bar{t}u^{0}\|_{2}<1$ and $E_{\mu}(\bar{t}u^{0})<\bar{t}^{2}E_{\mu}(u^{0})<0$. However, by Lemma \ref{est_inf}, we know $E_{\mu}(\bar{t}u^{0})>0$, which results in a contradiction. From $\|\nabla \frac{u^{0}}{a}\|_{2}< \bar{R}$ and the definition of $m_{\mu}$, we obtain $E_{\mu}(\frac{u^{0}}{a})\geq m_{\mu}$. Also, we have
\begin{equation*}
  E_{\mu}\left(\frac{u^{0}}{a}\right)\leq \frac{1}{a^{2}}E_{\mu}(u^{0})\leq E_{\mu}(u^{0}),
\end{equation*}
which yields that $E_{\mu}(u^{0})\geq m_{\mu}$, again a contradiction with \eqref{ener_minsplit}.
\end{proof}

To complete the proof of Theorem \ref{Thm_neg} we are left to show that, under the further assumptions therein, a local minimizer is a ground state. In fact, for such problems, proving that the ground state is the local minimizer is not straightforward. We derive this conclusion by estimating the $H^1$-norm of all negative-energy solutions. 

To begin with, by assumptions \eqref{AS_N4GS1}, \eqref{AS_N4GS2} and $\mu^{\frac{N}{2}-1}=\rho^{2}$, we have
\begin{equation}\label{GS_AS1}
   \max\left\{ \| \left[(N-4)V_{1} \right]^{+}\|_{\frac{N}{2}},  \|W_{1}\|_{N}\right\}\leq C_{1},
\end{equation}
and
\begin{equation}\label{GS_AS11}
   \mu^{\frac{N}{2}-1}\max\left\{ \| \left [(N-4)V_{2} \right]^{+}\|_{\infty},  \|W_{2}\|^{2}_{\infty}\right\}\leq C_{2},
\end{equation}
where $C_{1},C_{2}$ are mutually independent positive constants to be chosen below. Actually,  in different dimensions, the assumptions have the following three cases: For $N \geq 5$,
\[
   \max\left\{ \| (N-4)V_{1}^{+}\|_{\frac{N}{2}},  \|W_{1}\|_{N}\right\}\leq C_{1}, \ \ \text{and} \ \ \ \mu^{\frac{N}{2}-1}\max\left\{ \| (N-4)V_{2} ^{+}\|_{\infty},  \|W_{2}\|^{2}_{\infty}\right\}\leq C_{2}.
\]
 For $N =4$,
\[
    \|W_{1}\|_{N}\leq C_{1}, \ \ \text{and} \ \ \ \mu^{\frac{N}{2}-1}  \|W_{2}\|^{2}_{\infty} \leq C_{2}.
\]
For $N = 3$,
\[
   \max\left\{ \| V_{1}^{-}\|_{\frac{3}{2}},  \|W_{1}\|_{3}\right\}\leq C_{1}, \ \ \text{and} \ \ \ \mu^{\frac{1}{2}}\max\left\{ \| V_{2} ^{-}\|_{\infty},  \|W_{2}\|^{2}_{\infty}\right\}\leq C_{2}.
\]

Thus, we have the following lemma to obtain the conclusion.
\begin{lemma}\label{Comp_LCGS}
  Assume \eqref{AS_N4GS1} and \eqref{AS_N4GS2} hold, for a suitable choice of $C_1$, $C_2$. If $u$ 
  solves
\begin{equation}\label{eq:pb_with_smaller_a}
\begin{cases}
-\Delta u +V(x)u = \mu |u|^{2^{*}-2}u + \lambda u & \text{in }\mathbb{R}^{N},\smallskip\\
\int_{\mathbb{R}^{N}} u^2\,dx = a^2\le 1,
\end{cases}
\end{equation}
with $E_{\mu}(u) < 0$, then $\|\nabla u\|_{2} < \bar{R}$, where $\bar R=S^{\frac{N}{4}}\mu^{\frac{1}{2}-\frac{N}{4}}$. 
\end{lemma}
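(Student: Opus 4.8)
The plan is to derive three scalar identities for a solution $u$ of \eqref{eq:pb_with_smaller_a} --- the Nehari identity, the Pohozaev identity \eqref{Pohozaev_mini}, and the energy identity --- and to combine them with the sign condition $E_\mu(u)<0$ into a single inequality that bounds $\|\nabla u\|_2^2$ from above; the smallness hypotheses \eqref{AS_N4GS1}, \eqref{AS_N4GS2} (equivalently \eqref{GS_AS1}, \eqref{GS_AS11}), together with Lemma \ref{ineq_VW}, will then close the estimate.

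Set $t=\|\nabla u\|_2^2$, $s=\mu\|u\|_{2^{*}}^{2^{*}}$, $P=\int_{\R^N}V u^2\,dx$, $Q=\int_{\R^N}V u\,(x\cdot\nabla u)\,dx$ and $a^2=\|u\|_2^2\le1$. First I would record: testing \eqref{eq:pb_with_smaller_a} with $u$ gives $t+P=s+\lambda a^2$; the Pohozaev identity \eqref{Pohozaev_mini} gives $(N-2)(t-s)=N\lambda a^2+2Q$; and $E_\mu(u)=\tfrac12 t+\tfrac12 P-\tfrac1{2^{*}}s$. Eliminating $\lambda a^2$ between the first two yields $s=t+\tfrac N2 P+Q$, and substituting into $E_\mu(u)<0$, using $\tfrac12-\tfrac1{2^{*}}=\tfrac1N$ and $\tfrac1{2^{*}}=\tfrac{N-2}{2N}$, produces after simplification the crucial relation
\[
t<\frac{N(N-4)}{4}\,P+\frac{N-2}{2}\,Q .
\]
The coefficient $N-4$ in front of $P$ is exactly what makes the hypotheses natural: since $(N-4)V u^2\le\big[(N-4)V\big]^{+}u^2$ pointwise, only the positive part of $(N-4)V$ enters, that is $V^{+}$ for $N\ge5$, nothing for $N=4$, and $V^{-}$ for $N=3$ --- matching the case distinction recorded after \eqref{GS_AS11}.

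Next I would estimate the right-hand side by Lemma \ref{ineq_VW} and \eqref{def_pot}. Using $\big[(N-4)V\big]^{+}\le\big[(N-4)V_1\big]^{+}+\big[(N-4)V_2\big]^{+}$ and $a^2\le1$,
\[
\frac{N(N-4)}{4}P\le\frac N4\Big(S^{-1}\big\|[(N-4)V_1]^{+}\big\|_{\frac N2}\,t+\big\|[(N-4)V_2]^{+}\big\|_{\infty}\Big),
\]
and likewise $\tfrac{N-2}{2}Q\le\tfrac{N-2}{2}\big(S^{-1/2}\|W_1\|_N\,t+\|W_2\|_{\infty}\sqrt t\big)$, again with $a\le1$. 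Inserting these, the coefficient of $t$ on the left becomes $1$ minus a quantity controlled by a constant (depending only on $N$) times $C_1$; choosing $C_1$ small this coefficient is $\ge\tfrac12$, whence $t<\tfrac N2\|[(N-4)V_2]^{+}\|_{\infty}+(N-2)\|W_2\|_{\infty}\sqrt t$. Solving this quadratic inequality in $\sqrt t$ gives $\sqrt t<\tfrac12\big((N-2)\|W_2\|_{\infty}+\sqrt{(N-2)^2\|W_2\|_{\infty}^2+2N\|[(N-4)V_2]^{+}\|_{\infty}}\big)$. Finally, recalling $\mu^{\frac N2-1}=\rho^2$, so that $\bar{R}=S^{N/4}\rho^{-1}$, I would multiply by $\rho$ and use \eqref{AS_N4GS2} in the form $\rho^2\|[(N-4)V_2]^{+}\|_{\infty}<C_2$, $\rho\|W_2\|_{\infty}<C_2$: the right-hand side becomes strictly smaller than $S^{N/4}$ for $C_2$ small (depending only on $N$), so $\rho\sqrt t<S^{N/4}$, i.e. $\|\nabla u\|_2<\bar{R}$.

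The only genuinely delicate point is the first step: one must pick the linear combination of the three identities so that the resulting inequality bounds $t$ from \emph{above} rather than from below, and so that the potential term $P$ appears precisely with weight $N-4$ --- this is what forces the dimension-dependent shape of \eqref{AS_N4GS1}, \eqref{AS_N4GS2}. Once that identity is established, everything that follows is routine bookkeeping with Lemma \ref{ineq_VW} and the scaling relation $\mu^{\frac N2-1}=\rho^2$.
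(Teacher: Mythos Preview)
Your proposal is correct and follows essentially the same route as the paper: both combine the Pohozaev identity \eqref{Pohozaev_mini} (which is already the $\lambda$-free combination of Nehari and the classical Pohozaev, so your explicit elimination of $\lambda a^2$ reproduces it) with $E_\mu(u)<0$ to obtain $t<\tfrac{N(N-4)}{4}P+\tfrac{N-2}{2}Q$, and then close with Lemma~\ref{ineq_VW} and the smallness assumptions. The only difference is cosmetic bookkeeping in the last step: you solve the quadratic in $\sqrt t$, while the paper bounds via $\delta_1 t < A(N)\|V_2^*\|_\infty + B(N)\|W_2\|_\infty\sqrt t$ and a $\max$ argument, but both yield $\|\nabla u\|_2<\bar R$ for suitable $C_1,C_2$.
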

\begin{proof}
  Since $u$ solves \eqref{eq:pb_with_smaller_a}, we obtain that $u$ satisfies the following Pohozaev identity
  \begin{equation}\label{Pohozaev_mini}
  \|\nabla u\|_{2}^{2}-\mu\|u\|_{2^{*}}^{2^{*}}+\frac{1}{2}\int_{\mathbb{R}^{N}}V(x)(Nu^{2}+2u \nabla u \cdot x)dx=0.
\end{equation}
Then, combining \eqref{Pohozaev_mini} and $E_{\mu}(u)<0$, which implies $a>0$, we deduce that
\begin{equation*}
\begin{aligned}\displaystyle
\frac{1}{N}\int_{\R^{N}}|\nabla u|^{2}dx&<\frac{N-4}{4}\int_{\R^{N}}V(x)u^{2}dx+\frac{N-2}{2N}\int_{\R^{N}}V(x)u\nabla u\cdot x dx\\
&\le\frac{1}{4}\int_{\R^{N}}\left[(N-4)V(x)\right]^{+}u^{2}dx+\frac{N-2}{2N}\int_{\R^{N}}V(x)u\nabla u\cdot x dx.
\end{aligned}
\end{equation*}
Set $V^{*}_{i}:=\left[(N-4)V_{i}(x)\right]^{+}$, where $i=1,2$. From this, since 
$\|u\|_2=a\le1$, by Lemma \ref{ineq_VW}, we have
\begin{equation*}
  \|\nabla u\|^{2}_{2}< A(N)\left(S^{-1}\|V^{*}_{1}\|_{\frac{N}{2}}\|\nabla u\|_{2}^{2}+\|V^{*}_{2}\|_{\infty} \right)+B(N)\left(S^{-\frac{1}{2}}\|W_{1}\|_{N}\|\nabla u\|^{2}_{2}+\|W_{2}\|_{\infty}\|\nabla u\|_{2}\right),
\end{equation*}
where $A(N)=\frac{N}{4}$ and $B(N)=\frac{N-2}{2}$. Thus, we have
\begin{equation*}
  \left(1-A(N)S^{-1}\|V^{*}_{1}\|_{\frac{N}{2}}-B(N)S^{-\frac{1}{2}}\|W_{1}\|_{N}\right)\|\nabla u\|^{2}_{2}< A(N)\|V^{*}_{2}\|_{\infty}+B(N)\|W_{2}\|_{\infty}\|\nabla u\|_{2}.
\end{equation*}
By \eqref{GS_AS1}, choosing an appropriate positive constant $C_{1}$, we have
\begin{equation*}
  1-A(N)S^{-1}\|V^{*}_{1}\|_{\frac{N}{2}}-B(N)S^{-\frac{1}{2}}\|W_{1}\|_{N} \geq 1-A(N)S^{-1}C_{1}-B(N)S^{-\frac{1}{2}}C_{1}>\delta_{1}>0,
\end{equation*}
where $\delta_{1}$ is a positive constant depending on $V_{1}$ and $W_{1}$. In particular, 
$\delta_{1}$ is independent of both $V_{2}$ and $W_{2}$. Therefore, we obtain
\begin{equation}\label{eq:eqforrmk}
 \delta_{1}\|\nabla u\|^{2}_{2}< A(N)\|V^{*}_{2}\|_{\infty}+B(N)\|W_{2}\|_{\infty}\|\nabla u\|_{2}.
\end{equation}
Thus, we deduce that
\begin{equation*}
\|\nabla u\|^{2}_{2}< \frac{2}{\delta_{1}} \max\left\{A(N)\|V^{*}_{2}\|_{\infty},B(N)\|W_{2}\|_{\infty}\|\nabla u\|_{2}\right\}.
\end{equation*}
By \eqref{GS_AS11}, we have
\begin{equation*}
\|\nabla u\|^{2}_{2}< C(N,\delta_{1}) \max\left\{\|V^{*}_{2}\|_{\infty},\|W_{2}\|_{\infty}^{2}\right\} \leq C(N,\delta_{1}) C_{2}\mu^{1-\frac{N}{2}}=\frac{C(N,\delta_{1})}{S^{\frac{N}{2}}} C_{2} \bar R^2
<\bar R^2,
\end{equation*}
for an appropriate choice of the constant $ C_{2}$.
\end{proof}
\begin{corollary}\label{coro:GS}
Under the assumptions of the previous lemma, consider any solution $u\in H^1(\R^N)$ of
\eqref{eq:pb_with_smaller_a}. Then 
\[
E_\mu(u) \ge m_\mu.
\]
\end{corollary}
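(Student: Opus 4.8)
The plan is to split into two cases according to whether the solution $u$ has negative energy or not, and in both cases reduce to the definition of $m_\mu$ via Lemma \ref{Comp_LCGS}. First, if $E_\mu(u)\ge 0$, there is nothing to prove, since $m_\mu<0$ by Lemma \ref{lem:negative_m}; thus $E_\mu(u)\ge 0 > m_\mu$ immediately. So we may assume $E_\mu(u)<0$, which in particular forces $u\not\equiv 0$, hence $a=\|u\|_2\in(0,1]$ (recall \eqref{eq:pb_with_smaller_a} allows $a^2\le 1$).

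Next, apply Lemma \ref{Comp_LCGS} to conclude $\|\nabla u\|_2<\bar R$. Now distinguish two sub-cases. If $a=1$, then $u\in\Mcal$ and $\|\nabla u\|_2<\bar R$, so by the very definition \eqref{level_min} of $m_\mu$ we get $E_\mu(u)\ge m_\mu$ and we are done. If $0<a<1$, then $v:=u/a\in\Mcal$, and I would first check that $\|\nabla v\|_2<\bar R$ as well: this follows by the same scaling argument already used at the end of the proof of Lemma \ref{lem:loc_minimizer}. Namely, if $\|\nabla v\|_2\ge\bar R$, then by continuity there is $\bar t\in(1,1/a]$ with $\|\nabla(\bar t u)\|_2=\bar R$ and $\|\bar t u\|_2=\bar t a\le 1$; since $E_\mu(t u)\le t^2 E_\mu(u)<0$ for $t\ge1$ (using $E_\mu(u)<0$ and the fact that the quartic-in-$t$ term $-\mu t^{2^*}/2^*\|u\|_{2^*}^{2^*}$ only helps, while the quadratic part scales by $t^2$), this contradicts $E_*>0$ from Lemma \ref{est_inf}. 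Hence $\|\nabla v\|_2<\bar R$, so $E_\mu(v)\ge m_\mu$ by definition of $m_\mu$. Finally, since $E_\mu(u)<0$ and $a<1$,
\[
E_\mu(u) = \frac{a^2}{2}\|\nabla(u/a)\|_2^2 + \frac{a^2}{2}\int_{\R^N}V(x)(u/a)^2\,dx - \frac{\mu a^{2^*}}{2^*}\|u/a\|_{2^*}^{2^*} \ge a^2 E_\mu(u/a) \ge a^2 m_\mu \ge m_\mu,
\]
where the first inequality uses $a^{2^*}\le a^2$ (so the negative critical term is less negative after rescaling), and the last uses $m_\mu<0$ together with $a^2<1$.

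The only mildly delicate point is the scaling inequality $E_\mu(tu)\le t^2 E_\mu(u)$ for $t>1$ when $E_\mu(u)<0$, and similarly $E_\mu(u)\ge a^2 E_\mu(u/a)$ for $a<1$; both are elementary consequences of the explicit form \eqref{func:main1} of $E_\mu$, since $\|\nabla\cdot\|_2^2$ and $\int V(\cdot)^2$ are $2$-homogeneous while $\|\cdot\|_{2^*}^{2^*}$ is $2^*$-homogeneous with $2^*>2$, combined with the sign of $E_\mu(u)$. Everything else is a direct invocation of Lemmas \ref{lem:negative_m}, \ref{Comp_LCGS}, and \ref{est_inf} and the definition of $m_\mu$, so no genuine obstacle arises.
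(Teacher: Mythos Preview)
Your proof is correct and follows essentially the same approach as the paper. The paper's own proof is terse—it simply refers to ``as in the end of the proof of Lemma \ref{lem:loc_minimizer}'' for the case $0<a<1$—and you have spelled out exactly those details: the continuity argument giving $\|\nabla(u/a)\|_2<\bar R$ via Lemma \ref{est_inf}, followed by the scaling comparison $E_\mu(u)\ge a^2 E_\mu(u/a)\ge a^2 m_\mu\ge m_\mu$ (the paper writes this equivalently as $E_\mu(u/a)\le a^{-2}E_\mu(u)\le E_\mu(u)$).
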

\begin{proof}
If $E_\mu(u)\ge0$ (e.g. if $a=0$) the result is trivial. If $a=1$, the result follows from the very definition of $m_\mu$, together with Lemma \ref{Comp_LCGS}. Finally, if $E_\mu(u)<0$ and $0<a<1$, we can reduce 
to the case $a=1$ by considering $u/a$ instead of $u$, as in the end of the proof of Lemma \ref{lem:loc_minimizer}. 
\end{proof}

Finally, from the above lemma, we can conclude that all local minimizers at level $m_\mu<0$ are ground state solutions.

\begin{remark}\label{rmk:no_neg_sol}
Arguing as in Lemma \ref{Comp_LCGS} we obtain that, if $V_{2}\equiv 0$ (but $V=V_1$ still satisfies  assumptions \eqref{def_pot} and \eqref{AS_Positive}, with $C_0$ as above), 
the equation has no solutions at negative energy levels. This is because, if such a solution existed, 
$V_{2},W_{2}=0$ would yield $\delta\|\nabla u\|_{2}^{2}<0$ in \eqref{eq:eqforrmk}, a contradiction. This implies that, 
in such a case, the infimum in \eqref{Assump_Neg} would not be negative, so a local minimizer can not 
exist. On the other hand, a mountain pass solution may still exist, as observed in 
Remark \ref{rmk:no_neg_sol_intro}. 
\end{remark}

\section{Mountain pass solution (proof of Theorem \ref{thm:mountain_pass})}\label{sec:mp}

This section is devoted to the proof of Theorem \ref{thm:mountain_pass}. Throughout this section, 
we assume that $N=3,4,5$ and that all the previous assumptions hold. Moreover, we also assume 
\eqref{eq:final_ass}, for some $R>0$. Using the mountain pass geometry which we analyzed in Section 
\ref{sec:MP_geo}, see \eqref{level_Moun}, we can construct an associated bounded Palais-Smale sequence by adopting 
the classical strategy introduced by Jeanjean in \cite{MR1430506}. More precisely, define:
\begin{equation*}
  \tilde{E}_{\mu}(u,s):=E_{\mu}(e^{\frac{N}{2}s}u(e^{s}x)), \ \ \ \ \mbox{for all }(u,s)\in H^{1}(\R^{N})\times \R,
\end{equation*}
\begin{equation*}
  \tilde{\Gamma}:=\{\tilde{\gamma}\in C([0,1],\mathcal{M}\times \R):\tilde{\gamma}(0)=(w_{h_{0}},0),\tilde{\gamma}(1)=(w_{h_{1}},0)\}
\end{equation*}
and
\begin{equation*}
  \tilde{c}_{\mu}:=\inf_{\tilde{\gamma} \in\tilde{\Gamma}}\sup_{t\in[0,1]}\tilde{E}_{\mu}(\tilde{\gamma}(t))
\end{equation*}
(recall the definition of $w_{h_i}$ in \eqref{eq:Rbar}).

\begin{lemma}\label{ener_compare}
\begin{equation*}
  \tilde{c}_{\mu}=c_{\mu}. 
\end{equation*}
\end{lemma}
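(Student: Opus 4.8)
The plan is to show the two minimax values coincide by establishing inequalities in both directions, exploiting the fact that the map $(u,s)\mapsto e^{\frac N2 s}u(e^s\cdot) = s\star u$ is exactly the scaling $u_h$ with $h=e^s$, so that $\tilde E_\mu(u,s) = E_\mu(u_{e^s})$. First I would prove $\tilde c_\mu \le c_\mu$: given any $\gamma\in\Gamma$, define $\tilde\gamma(t) = (\gamma(t),0)\in\Mcal\times\R$. Then $\tilde\gamma(0) = (w_{h_0},0)$, $\tilde\gamma(1) = (w_{h_1},0)$, so $\tilde\gamma\in\tilde\Gamma$, and since $\tilde E_\mu(u,0) = E_\mu(u)$, we get $\sup_{t}\tilde E_\mu(\tilde\gamma(t)) = \sup_t E_\mu(\gamma(t))$. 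Taking the infimum over $\gamma\in\Gamma$ yields $\tilde c_\mu\le c_\mu$.

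For the reverse inequality $c_\mu\le\tilde c_\mu$, I would take an arbitrary $\tilde\gamma\in\tilde\Gamma$, write $\tilde\gamma(t) = (\sigma(t),s(t))$ with $\sigma\in C([0,1],\Mcal)$, $s\in C([0,1],\R)$, $s(0)=s(1)=0$, and define $\gamma(t) := s(t)\star\sigma(t) = (\sigma(t))_{e^{s(t)}}\in\Mcal$. This is continuous, and $\gamma(0) = 0\star w_{h_0} = w_{h_0}$, $\gamma(1) = w_{h_1}$, so $\gamma\in\Gamma$. By the very definition of $\tilde E_\mu$ we have $E_\mu(\gamma(t)) = E_\mu((\sigma(t))_{e^{s(t)}}) = \tilde E_\mu(\sigma(t),s(t)) = \tilde E_\mu(\tilde\gamma(t))$, so $\sup_t E_\mu(\gamma(t)) = \sup_t\tilde E_\mu(\tilde\gamma(t))$. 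Hence $c_\mu\le\sup_t\tilde E_\mu(\tilde\gamma(t))$, and taking the infimum over $\tilde\gamma\in\tilde\Gamma$ gives $c_\mu\le\tilde c_\mu$. Combining, $\tilde c_\mu = c_\mu$.

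The only point requiring a little care is the continuity of $t\mapsto s(t)\star\sigma(t)$ as a map into $H^1(\R^N)$ (equivalently into $\Mcal$ with the $H^1$ topology); this follows from the joint continuity of the action $(s,u)\mapsto s\star u = e^{\frac N2 s}u(e^s\cdot)$ from $\R\times H^1(\R^N)$ to $H^1(\R^N)$, which is standard (it is the content of the scaling setup used in \cite{MR1430506,MR4304693,MR4443784}), together with the continuity of $\sigma$ and $s$. I do not expect any genuine obstacle here: the whole lemma is essentially a change-of-variables bookkeeping identity, the real work having been front-loaded into the definitions of $\tilde E_\mu$, $\tilde\Gamma$ and $w_{h_0}$, $w_{h_1}$. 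The mild subtlety is just making sure the endpoints match, which they do precisely because $s(0)=s(1)=0$ forces $0\star w_{h_i} = w_{h_i}$.
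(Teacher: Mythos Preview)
Your proof is correct and follows essentially the same approach as the paper's: embed $\Gamma$ into $\tilde\Gamma$ via $\gamma\mapsto(\gamma,0)$ for one inequality, and project $\tilde\Gamma$ onto $\Gamma$ via $(\sigma,s)\mapsto s\star\sigma$ for the other. Your explicit remark on the continuity of $(s,u)\mapsto s\star u$ is a detail the paper leaves implicit, but otherwise the arguments are identical.
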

\begin{proof}
  For any $\gamma\in \Gamma$, by the definition of $\tilde{\Gamma}$, we have $(\gamma,0)\in \tilde{\Gamma}$, which implies $\Gamma \subset \tilde{\Gamma}$. Thus, we obtain $\tilde{c}_{\mu}\leq c_{\mu}$. Conversely, for any $\tilde{\gamma}=(u,s)\in \tilde{\Gamma}$, then $\gamma:=
  e^{\frac{N}{2}s}u(e^{s}\cdot)\in \Gamma$ is such that
  \begin{equation*}
    \max_{t\in[0,1]}E_{\mu}(\gamma(t))=\max_{t\in[0,1]}\tilde{E}_{\mu}(\tilde{\gamma}(t)),
  \end{equation*}
  which means $\tilde{c}_{\mu}\geq c_{\mu}$.
\end{proof}

\begin{remark}\label{equal_twopath}
An immediate consequence of Lemma \ref{ener_compare} is that, recalling Lemma \ref{est_inf}, we have 
$\tilde{c}_{\mu}\geq E_{*}>0$. Also, if $(u_{n},s_{n})_{n}$ is a $(PS)_{c}$ sequence for 
$\tilde{E}_{\mu}$, there exists $\tilde{u}_{n}=(u_{n},s_{n})$ such that $(\tilde{u}_{n})_{n}$ is a 
$(PS)_{c}$ sequence for $E_{\mu}$. Meanwhile, if $(u_{n})_{n}$ is a $(PS)_{c}$ sequence for $E_{\mu}$, then $(u_{n},0)_{n}$ is a $(PS)_{c}$ sequence for $\tilde{E}_{\mu}$.
\end{remark}

Subsequently, we aim to estimate $c_{\mu}$ to find its upper bound. Before that, we present the following lemma, which states that if $V \in L^{p}(\R^{N})$ for $1\leq p<+\infty$, under small scale perturbations, the variation of $V$ in $L^{p}(\R^{N})$ can be considered negligible.

\begin{lemma}\label{Cont_V}
  Assume that $V \in L^{p}(\R^{N})$ with $1\leq p< +\infty$, $\alpha \in \R$ and $s>0$. Then,
  \begin{equation*}
   \left( \int_{\R^{N}}\left[ s^{\alpha}V\left(\frac{x}{s}\right)-V(x) \right]^{p}dx \right)^{\frac{1}{p}} \to 0, \ \ \ \mbox{as } \ s \to 1.
  \end{equation*}
\end{lemma}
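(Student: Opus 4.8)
The plan is to prove this by a standard density/approximation argument, reducing the general case $V\in L^p(\R^N)$ to that of a compactly supported continuous function, for which the convergence is immediate. First I would introduce, for $s>0$, the operator $T_s V(x) := s^\alpha V(x/s)$, and note the elementary scaling identity $\|T_s V\|_p = |s|^{\alpha + N/p}\,\|V\|_p$, which in particular shows that $s\mapsto \|T_s V\|_p$ is continuous at $s=1$ with $\|T_1 V\|_p = \|V\|_p$, and that the operator norm of $T_s$ on $L^p$ stays bounded, say by a constant $M$, for $s$ in a neighborhood of $1$ (e.g. $s\in[1/2,2]$).

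Next I would fix $\eps>0$ and pick $\varphi\in C_c(\R^N)$ with $\|V-\varphi\|_p<\eps$, using density of $C_c(\R^N)$ in $L^p(\R^N)$ for $1\le p<\infty$. Then I would split
\[
\|T_s V - V\|_p \le \|T_s(V-\varphi)\|_p + \|T_s\varphi - \varphi\|_p + \|\varphi - V\|_p \le (M+1)\eps + \|T_s\varphi-\varphi\|_p,
\]
for $s$ close to $1$. It remains to handle the middle term for the fixed, nice function $\varphi$. Here, since $\varphi$ is continuous with compact support, $\varphi$ is uniformly continuous and bounded; as $s\to1$ one has $s^\alpha\varphi(x/s)\to\varphi(x)$ uniformly in $x$, and the supports $\supp(T_s\varphi) = s\,\supp(\varphi)$ stay inside a fixed compact set $K$ for $s\in[1/2,2]$. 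Hence $\|T_s\varphi-\varphi\|_p \le |K|^{1/p}\,\|T_s\varphi-\varphi\|_\infty \to 0$ as $s\to1$. Combining, $\limsup_{s\to1}\|T_s V-V\|_p \le (M+1)\eps$, and letting $\eps\to0$ gives the claim.

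I do not anticipate a genuine obstacle here; the only point requiring a little care is the justification that $T_s\varphi \to \varphi$ uniformly, which uses both the uniform continuity of $\varphi$ and the continuity of $s\mapsto s^\alpha$ (so that the prefactor $s^\alpha$ stays near $1$), together with keeping track that all supports remain in a common compact set so that the $L^\infty$–to–$L^p$ passage on a bounded domain is legitimate. One could alternatively invoke the standard continuity of translations/dilations in $L^p$ as a black box, but the density argument above is self-contained and elementary.
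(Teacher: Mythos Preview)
Your proof is correct and follows essentially the same density argument as the paper: approximate $V$ in $L^p$ by a nice compactly supported function, use the triangle inequality together with the uniform $L^p$-boundedness of the dilation near $s=1$, and verify the convergence directly for the approximation. The only minor differences are cosmetic: the paper first splits off the prefactor $s^\alpha$ and then, for the approximating $C^\infty_0$ function $H$, obtains a quantitative bound on $\|H(\cdot/s)-H\|_p$ via the fundamental theorem of calculus and H\"older, whereas you treat the full operator $T_s$ at once and use uniform continuity of a $C_c$ approximation on a fixed compact set.
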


\begin{proof}
By the triangle inequality, we have
\begin{equation*}
  \begin{aligned}
    \left\|s^{\alpha}V\left(\frac{x}{s}\right)-V(x)\right\|_{p} &\leq \left \|s^{\alpha}V\left (\frac{x}{s}\right)-V\left (\frac{x}{s}\right)\right\|_{p}+\left\|V\left(\frac{x}{s}\right)-V(x)\right\|_{p}\\
    &=(s^{\alpha}-1)s^{\frac{N}{p}}\left\|V(x)\right\|_{p}+\left\|V\left(\frac{x}{s}\right)-V(x)\right\|_{p}.
    \end{aligned}
\end{equation*}
Then, since $C^{\infty}_{0}(\R^{N})$ is dense in $L^{p}(\R^{N})$ for every $ p < +\infty$, for every 
$\varepsilon>0$ there exists $H \in C^{\infty}_{0}(\R^{N})$ such that, for $s$ close to $1$,
\begin{equation*}
  \left\|H(x)-V(x)\right\|_{p}< \varepsilon, \ \ \ \ \ \mbox{and } \ \ \ \left\|H\left(\frac{x}{s}\right)-V\left(\frac{x}{s}\right)\right\|_{p}< 2\varepsilon.
\end{equation*}
Since $H \in C^{\infty}_{0}(\R^{N})$, by the H\"{o}lder inequality (with $\frac{1}{p}+\frac{1}{p^{\prime}}=1$) we have
\begin{equation*}
  \begin{aligned}
  \int_{\R^{N}}\left| H(sx)-H(x)\right|^{p}dx &\leq \int_{\R^{N}}\left|\int_{1}^{s}|\nabla H(zx)\cdot x|dz\right|^{p}dx\\
  &\leq |s-1|^{\frac{p}{p^{\prime}}} \int_{1}^{s}\left( \int_{\R^{N}}|\nabla H(zx)\cdot x|^{p}dx \right)dz\\
  &=|s-1|^{\frac{p}{p^{\prime}}} \int_{1}^{s}\left( \int_{\R^{N}}\left|\nabla H(y)\cdot \frac{y}{z}\right|^{p}z^{-N}dy \right)dz\\
  &=|s-1|^{\frac{p}{p^{\prime}}} \int_{1}^{s}z^{-N-p}dz \int_{\R^{N}}|\nabla H(y)\cdot y|^{p}dy\\
  & \leq C\left\|\nabla H(y)\cdot y \right\|_{p}^{p} |s-1|^{\frac{p}{p^{\prime}}+1}<\varepsilon^{p}, 
  \end{aligned}
\end{equation*} 
for $|s-1|$ sufficiently small. Summing up, we have
\begin{equation*}
\begin{aligned}
\left\|V\left(\frac{x}{s}\right)-V(x)\right\|_{p} \leq \left\|V\left(\frac{x}{s}\right)-H\left(\frac{x}{s}\right)\right\|_{p} + \left\|H(x)-V(x)\right\|_{p} + \left\|H\left(\frac{x}{s}\right)-H(x)\right\|_{p} <4 \eps,
\end{aligned}
\end{equation*}
and the lemma follows.
\end{proof}

In order to estimate $c_\mu$, we argue as in \cite[Lemma 3.1]{MR4433054} (see also 
\cite[Proposition 1.12]{MR4476243}), although we need to modify some parts of the strategy. Let us introduce the function $U_{\varepsilon} \in H^{1}(\R^{N})$ defined as
  \begin{equation}\label{BrL}
    U_{\varepsilon}=\frac{\eta [N(N-2)\varepsilon^{2}]^{\frac{N-2}{4}}}{[\varepsilon^{2}+|x|^{2}]^{\frac{N-2}{2}}},
  \end{equation}
where $\eta \in C^{\infty}_{0}(\R^{N})$, $0\leq\eta\leq 1,$ is a smooth cut-off function such that $\eta\equiv1$ in $B_{R}$, $\eta\equiv0$ in $\R^{N}\setminus B_{3R/2}$ (with $B_{2R}$ as in assumption \eqref{eq:final_ass}).  Then, using the estimates provided by Struwe \cite[page 179]{Struwebook} (or those by
Brezis and Nirenberg, \cite[eqs. (1.13), (1.29)]{BrezisNirenberg}), we have
\begin{equation}\label{est:struwe}
    \begin{aligned}\displaystyle
&\|\nabla U_{\varepsilon}\|_{2}^{2}=S^{N/2}+O(\varepsilon^{N-2}), \ \ \ \|U_{\varepsilon}\|_{2^{*}}^{2^{*}}=S^{N/2}+O(\varepsilon^{N}),\\
&\|U_{\varepsilon}\|_{2}^{2}=
\begin{cases}
c \varepsilon^{2}+O(\varepsilon^{3}), & \text{if }N=5,\\
c \varepsilon^{2}|\ln \varepsilon|+O(\varepsilon^{2}), & \text{if }N=4,\\
c \varepsilon+O(\varepsilon^{2}), & \text{if } N=3,
\end{cases}
    \end{aligned}
\end{equation}
where $c$ denotes a strictly positive constant (depending on N). Also, we have
\begin{equation}\label{est:Up}
    \begin{aligned}\displaystyle
\|U_{\varepsilon}\|_{p}^{p}=
\begin{cases}
 c_{1}\varepsilon^{N-\frac{N-2}{2}p}+o(\varepsilon^{N-\frac{N-2}{2}p}), & \text{if }\frac{N}{N-2}<p<2^{*},\\
 c_{1}\varepsilon^{\frac{N}{2}}|\ln \varepsilon|+O(\varepsilon^{\frac{N}{2}}), & \text{if }p=\frac{N}{N-2},\\
c_{1}\varepsilon^{\frac{N-2}{2}p}+o(\varepsilon^{\frac{N-2}{2}p}), & \text{if } 1\leq p<\frac{N}{N-2}.
\end{cases}
    \end{aligned}
\end{equation}
where $c_{1}$ denotes a strictly positive constant (depending on $N,p$). We have the following lemma.
\begin{lemma}\label{upbound_moun}
  Under the assumptions of Theorem \ref{thm:mountain_pass}, we have
  \begin{equation}\label{EST_MP}
    c_{\mu}< m_{\mu}+\frac{1}{N}S^{\frac{N}{2}}\mu^{1-\frac{N}{2}}< \frac{1}{N}S^{\frac{N}{2}}\mu^{1-\frac{N}{2}}.
  \end{equation}
\end{lemma}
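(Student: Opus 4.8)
The second inequality in~\eqref{EST_MP} is nothing but $m_\mu<0$, i.e.\ Lemma~\ref{lem:negative_m}; so the real content is $c_\mu<m_\mu+\tfrac1N S^{N/2}\mu^{1-N/2}$, and the plan is to exhibit a path in $\Gamma$ whose maximal energy lies strictly below this value.

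\emph{The competitor path.} Let $u_0\in\mathcal M$, $u_0\ge0$, be the local minimizer of Lemma~\ref{lem:loc_minimizer}, so $E_\mu(u_0)=m_\mu$ and $\|\nabla u_0\|_2<\bar R$ (in fact $\|\nabla u_0\|_2^2$ is small compared with $\bar R^2$, by the proof of Lemma~\ref{Comp_LCGS}). After a translation we may assume the ball $B_{2R}$ of~\eqref{eq:final_ass} is centered at the origin; since $V\in L^\infty(B_{2R})$, elliptic regularity gives $u_0\in C(B_{2R})$, and $u_0(0)>0$ by Remark~\ref{rmk:regmax}. Arguing as in~\cite[Lemma~3.1]{MR4433054} and~\cite[Prop.~1.12]{MR4476243}, but \emph{without dilating $u_0$} (this is the modification forced by the inhomogeneous term: dilating $u_0$ would replace $\int V u_0^2$ by $\int V(x/h)u_0^2$, over which we have no control), I consider
\[
\gamma_\varepsilon(h):=\frac{u_0+h\star U_\varepsilon}{\|u_0+h\star U_\varepsilon\|_2}\in\mathcal M,\qquad h\ge0,
\]
with $U_\varepsilon$ the truncated Aubin--Talenti bubble~\eqref{BrL}, supported in $B_{3R/2}$. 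By~\eqref{est:struwe}, $E_\mu(\gamma_\varepsilon(h))\to E_\mu(u_0)=m_\mu$ as $h\to0$ and $E_\mu(\gamma_\varepsilon(h))\to-\infty$ as $h\to+\infty$, while $\|\nabla\gamma_\varepsilon(h)\|_2$ increases from $\approx\|\nabla u_0\|_2<\bar R$ to $+\infty$. Restricting $h$ to a large compact interval and joining the two endpoints to $w_{h_0}$ and to $w_{h_1}$ by paths lying, respectively, below the barrier $E_*$ and in $\{E_\mu<0\}$ --- which is possible by a standard argument based on Lemma~\ref{geo}, on the fact that $E_\mu(u)\to0$ as $\|\nabla u\|_2\to0$ on $\mathcal M$ (a consequence of Lemma~\ref{lem:VW_loss}), and on the smallness of $\|\nabla u_0\|_2$ --- one obtains an admissible path along which the maximum of $E_\mu$ equals $\max_{h\ge0}E_\mu(\gamma_\varepsilon(h))$, whence $c_\mu\le\max_{h\ge0}E_\mu(\gamma_\varepsilon(h))$.

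\emph{The key estimate.} There remains to show $\max_{h\ge0}E_\mu(\gamma_\varepsilon(h))<m_\mu+\tfrac1N S^{N/2}\mu^{1-N/2}$ for $\varepsilon$ small. Expanding $E_\mu(\gamma_\varepsilon(h))$ via Brezis--Lieb algebra and using~\eqref{est:struwe}--\eqref{est:Up}, one finds that, as $\varepsilon\to0$ locally uniformly in $h\in(0,\infty)$,
\[
E_\mu(\gamma_\varepsilon(h))\longrightarrow \Phi(h):=m_\mu+S^{N/2}\Big(\frac{h^2}{2}-\frac{\mu h^{2^*}}{2^*}\Big),
\]
since $\|u_0+h\star U_\varepsilon\|_2\to1$, $\|\nabla U_\varepsilon\|_2^2,\ \|U_\varepsilon\|_{2^*}^{2^*}\to S^{N/2}$, and all cross terms and the bubble's potential terms vanish; and $\Phi$ is maximized at $h_0:=\mu^{-(N-2)/4}$, with $\max_h\Phi(h)=\Phi(h_0)=m_\mu+\tfrac1N S^{N/2}\mu^{1-N/2}$, precisely the target value. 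The proof thus reduces to showing that $E_\mu(\gamma_\varepsilon(h))-\Phi(h)$ is \emph{strictly negative} near $h_0$ (where the maximum of $E_\mu(\gamma_\varepsilon(\cdot))$ sits once $\varepsilon$ is small). Setting $\theta(h)=\|u_0+h\star U_\varepsilon\|_2^2=1+\tau$, the normalization contributes $-\tfrac{\tau}{2}\big(\|\nabla(u_0+h\star U_\varepsilon)\|_2^2+\int V(u_0+h\star U_\varepsilon)^2-\mu\|u_0+h\star U_\varepsilon\|_{2^*}^{2^*}\big)$ plus higher order; at $h_0$ the bracket equals $\lambda^0+o(1)<0$ (test the equation for $u_0$ and use $\mu h_0^{2^*-2}=1$), so this term is harmless up to its size $O(\tau)$. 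The gradient cross term, the potential of the bubble (which lives where $V$ is bounded, by~\eqref{eq:final_ass}) and the $L^p$-norms of $U_\varepsilon$ are controlled by Lemma~\ref{ineq_VW}, Lemma~\ref{Cont_V} and~\eqref{est:struwe}--\eqref{est:Up}. The gain comes from the interaction in $\|u_0+h_0\star U_\varepsilon\|_{2^*}^{2^*}$: the term $-\tfrac{\mu}{2^*}\big(\|u_0+h_0\star U_\varepsilon\|_{2^*}^{2^*}-\|u_0\|_{2^*}^{2^*}-h_0^{2^*}\|U_\varepsilon\|_{2^*}^{2^*}\big)$ is strictly negative, and by~\eqref{est:Up} together with $u_0(0)>0$ it has size of order $\varepsilon^{(N-2)/2}$ (with a $\mu$-weight $\mu\,h_0^{\,N(6-N)/(2(N-2))}$, finite since $N<6$).

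\emph{Where the difficulty lies.} The delicate point --- and the reason for the restriction $N\le5$ --- is a competition of scales: every potentially adverse term (the remainder of $\|\nabla U_\varepsilon\|_2^2$, the $\tau$-term, the $\|U_\varepsilon\|_2^2$-contributions, the potential perturbations) is $O(\varepsilon^{N-2})+O(\|U_\varepsilon\|_2^2)$, i.e.\ $O(\varepsilon^2)$ up to logarithms with fixed constants, together with an $O(\varepsilon^{(N-2)/2})$ piece whose $\mu$-coefficient is small; whereas the gain above is of size $\varepsilon^{(N-2)/2}$ with a comparatively large $\mu$-coefficient. Since $(N-2)/2<2$ precisely when $N\le5$, and since the smallness in~\eqref{AS_Positive}--\eqref{AS_N4GS2} (or $\rho$ small) makes the $\mu$-constants fall on the right side, one gets
\[
\max_{h\ge0}E_\mu(\gamma_\varepsilon(h))\le m_\mu+\frac1N S^{N/2}\mu^{1-N/2}-c_\ast\,\varepsilon^{(N-2)/2}(1+o(1))
\]
for some $c_\ast>0$, hence~\eqref{EST_MP}. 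Making this precise --- in particular extracting the exact constant of the $\|\cdot\|_{2^*}^{2^*}$-interaction from the behaviour of $u_0$ near the bubble's center, and checking that it dominates the $\tau$-term when $N=5$ --- is the main technical burden of Theorem~\ref{thm:mountain_pass} (cf.\ Remark~\ref{rmk:comparisonwithcombined}); at $N=6$ the two exponents coincide and the method fails, which is why that case remains open.
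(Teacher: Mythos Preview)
Your overall strategy is sound and close in spirit to the paper's, but the normalization you choose is genuinely different and your handling of the resulting correction contains a real gap.

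\textbf{Difference from the paper.} The paper does \emph{not} normalize by division. It sets $W_{t,\varepsilon}=u_\mu+tU_\varepsilon$ and projects onto $\mathcal M$ via the $D^{1,2}$--invariant dilation $\tilde v(x)=s^{(N-2)/2}v(sx)$, $s=\|W_{t,\varepsilon}\|_2$. This keeps $\|\nabla\cdot\|_2$ and $\|\cdot\|_{2^*}$ intact, so the whole normalization error lands in the potential term; the paper then uses the Pohozaev identity together with Lemma~\ref{Cont_V} to show $E_\mu(\tilde u_\mu)-E_\mu(u_\mu)=-\lambda_\mu(s-1)+o(|s-1|)$, which cancels against $t\lambda_\mu\int u_\mu U_\varepsilon$ up to $O(\|U_\varepsilon\|_2^2)$. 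In your division approach no $V(x/s)$ appears, so Lemma~\ref{Cont_V} is in fact irrelevant (your invoking it for ``the gradient cross term'' is misplaced).

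\textbf{The gap.} Your claim that the normalization term $-\tfrac{\tau}{2}\,[\,\text{bracket}\,]$ is ``harmless'' because the bracket equals $\lambda^0+o(1)<0$ is backwards: since $\tau>0$, this term is \emph{positive}, of size $|\lambda^0|\,\sigma$ with $\sigma=\int u_0(h\star U_\varepsilon)\sim c\,\varepsilon^{(N-2)/2}$, i.e.\ of the \emph{same} order as your gain $\mu\int u_0(h\star U_\varepsilon)^{2^*-1}$. Your fallback --- that its ``$\mu$-coefficient is small'' --- is not justified by the assumptions of Theorem~\ref{thm:mountain_pass}: no smallness of $\mu$ is imposed, and $|\lambda^0|>|\tilde\lambda_1|$ is a fixed positive number. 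What actually saves the estimate is an \emph{exact cancellation} you did not record: testing the equation for $u_0$ with $h\star U_\varepsilon$ gives
\[
\int\nabla u_0\cdot\nabla(h\star U_\varepsilon)+\int V u_0(h\star U_\varepsilon)-\mu\int u_0^{2^*-1}(h\star U_\varepsilon)=\lambda^0\sigma,
\]
so these cross terms in $E_\mu(u_0+h\star U_\varepsilon)$ contribute $+\lambda^0\sigma$, while your normalization contributes $-\tfrac{\tau}{2}\lambda^0=-\lambda^0\sigma-\tfrac{\lambda^0}{2}\|U_\varepsilon\|_2^2+O(\tau^2)$. The $\lambda^0\sigma$ pieces cancel, leaving $-\tfrac{\lambda^0}{2}\|U_\varepsilon\|_2^2=o(\varepsilon^{(N-2)/2})$ for $N\le5$. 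With this cancellation identified (and noting that the gradient cross term is handled by the equation, not by Lemmas~\ref{ineq_VW} or~\ref{Cont_V}), your route does go through and is arguably cleaner than the paper's in that it avoids Pohozaev and Lemma~\ref{Cont_V} altogether.
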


\begin{proof}
First, we define $W_{t,\varepsilon}=u_{\mu}+tU_{\varepsilon}$, where $u_{\mu}$ is a ground state solution, according to Theorem \ref{Thm_neg}, $U_{\varepsilon}$ is defined in \eqref{BrL} and $t\geq 0$. For any function $v$, let us define $\tilde v$ as
\[
\tilde{v}(x)=s^{\frac{N-2}{2}}v(sx),\qquad\text{ where $s=\|W_{t,\varepsilon}\|_{2}>0$,}
\]
so that 
\[
\tilde{W}_{t,\varepsilon}=\tilde{u}_{\mu}+t\tilde{U}_{\varepsilon},\qquad\text{ with }\|\tilde{W}_{t,\varepsilon}\|_{2}=1.
\]

Subsequently, we consider the relationship between $c_{\mu}$ and $E_{\mu}(\tilde{W}_{t,\varepsilon})$. Fix $\varepsilon$ and let $t\rightarrow 0$, it is evident that
\begin{equation*}
 \tilde{W}_{t,\varepsilon}\to u_\mu,\qquad\text{and }  E_{\mu}(u_{\mu})<0, \quad\|\nabla u_\mu 
 \|_{2}\leq \bar{R}
\end{equation*}
(recall \eqref{eq:Rbar}). Similarly, when $t\rightarrow +\infty$, we have
\begin{equation*}
  E_{\mu}(\tilde{W}_{t,\varepsilon})\rightarrow -\infty \ \ \mbox{with } \ \|\nabla\tilde{W}_{t,\varepsilon}\|_{2}\rightarrow +\infty.
\end{equation*}
Thus, we can take $t_{1}(\varepsilon)\ll 1$ and $t_{2}(\varepsilon)\gg 1$ such that
\begin{equation}\label{MP_COMP}
  c_{\mu}\leq \max_{t_{1}(\varepsilon)\leq t\leq t_{2}(\varepsilon)}E_{\mu}(\tilde{W}_{t,\varepsilon}).
\end{equation}

First, since  $u_{\mu}\in L^\infty(\R^N)$, by \eqref{est:Up} we have
\begin{equation}\label{est_u1U}
  \int_{\R^{N}}u_{\mu}U_{\varepsilon}^{2^{*}-1}dx = c_{2} \varepsilon^{\frac{N-2}{2}}+ o(\eps^{\frac{N-2}{2}}),
\end{equation}
where $c_{2}$ denotes a strictly positive constant (depending on $N$). By the definition of $s$, we know that
\begin{equation}\label{eq:def_s}
  s^{2}=\|u_{\mu}+tU_{\varepsilon}\|_{2}^{2}= 1+t^{2}\|U_{\varepsilon}\|_{2}^{2}+2t \int_{\R^{N}} u_{\mu}U_{\varepsilon}\,dx,
\end{equation}
which implies that
\begin{equation}\label{est_s}
\begin{aligned}
  s-1=\frac{s^{2}-1}{s+1}=&   \frac{1}{s+1}\left\{ t^{2} \|U_{\varepsilon}\|_{2}^{2}+2t \int_{\R^{N}} u_{\mu}U_{\varepsilon}\,dx \right\}.
\end{aligned}
\end{equation}
Since $3\leq N \leq 5$, we know that $(1+t)^{2^{*}}\geq 1+ t^{2^{*}}+ 2^{*} t+ 2^{*} t^{2^{*}-1}$ 
for every $t \geq 0$. 
By direct computation, we have
\begin{equation*}
  \begin{aligned}
  E_{\mu}(\tilde{W}_{t,\varepsilon})\leq & \frac{1}{2}\int_{\R^{N}}|\nabla \tilde{u}_{\mu}|^{2}dx+ t \int_{\R^{N}}\nabla \tilde{u}_{\mu}\cdot \nabla \tilde{U}_{\varepsilon}dx+\frac{t^{2}}{2}\int_{\R^{N}}|\nabla \tilde{U}_{\varepsilon}|^{2}dx\\
  & +\frac{1}{2}\int_{\R^{N}}V \tilde{u}_{\mu}^{2}dx+t \int_{\R^{N}}V \tilde{u}_{\mu}\cdot  \tilde{U}_{\varepsilon}dx+\frac{t^{2}}{2}\int_{\R^{N}} V \tilde{U}_{\varepsilon}^{2}dx\\
  & -\frac{\mu}{2^{*}}\int_{\R^{N}}\tilde{u}_{\mu}^{2^{*}}dx-\frac{\mu t^{2^{*}}}{2^{*}}\int_{\R^{N}}\tilde{U}_{\varepsilon}^{2^{*}}dx-\mu t \int_{\R^{N}} \tilde{u}_{\mu}^{2^{*}-1} \tilde{U}_{\varepsilon}dx- \mu t^{2^{*}} \int_{\R^{N}} \tilde{u}_{\mu} \tilde{U}_{\varepsilon}^{2^{*}-1}dx,
  \end{aligned}
\end{equation*}
which yields
\begin{equation*}
  \begin{aligned}
  E_{\mu}(\tilde{W}_{t,\varepsilon})\leq& E_{\mu}(\tilde{u}_{\mu}) +\frac{t^{2}}{2}\int_{\R^{N}}|\nabla \tilde{U}_{\varepsilon}|^{2}dx-\frac{\mu t^{2^{*}}}{2^{*}}\int_{\R^{N}}\tilde{U}_{\varepsilon}^{2^{*}}dx\\
    & + t \int_{\R^{N}}\nabla \tilde{u}_{\mu}\cdot \nabla \tilde{U}_{\varepsilon}dx+t \int_{\R^{N}}V \tilde{u}_{\mu}\cdot  \tilde{U}_{\varepsilon}dx-\mu t \int_{\R^{N}} \tilde{u}_{\mu}^{2^{*}-1} \tilde{U}_{\varepsilon}dx\\
  & +\frac{t^{2}}{2}\int_{\R^{N}} V \tilde{U}_{\varepsilon}^{2}dx- \mu t^{2^{*}} \int_{\R^{N}} \tilde{u}_{\mu} \tilde{U}_{\varepsilon}^{2^{*}-1}dx.
  \end{aligned}
\end{equation*}
Since $u_{\mu}(y)=s^{-\frac{N-2}{2}}\tilde{u}_{\mu}(s^{-1}y)$ and $u_{\mu}$ is the ground state 
solution of \eqref{eq:main1}, with Lagrange multiplier 
$\lambda_\mu<0$, we have
\begin{equation*}
  \int_{\R^{N}}\nabla \tilde{u}_{\mu}\nabla \tilde{U}_{\varepsilon}dx+s^{2}\int_{\R^{N}}V(sx)\tilde{u}\tilde{U}_{\varepsilon}dx-\mu \int_{\R^{N}}\tilde{u}_{\mu}^{2^{*}-1}\tilde{U}_{\varepsilon}dx=\lambda_{\mu} s^{2} \int_{\R^{N}}\tilde{u}_{\mu}\tilde{U}_{\varepsilon}dx.
\end{equation*}
From this, we deduce
\begin{equation}\label{eq:before_divide}
  \begin{aligned}
  E_{\mu}(\tilde{W}_{t,\varepsilon})\leq& m_{\mu}+ E_{\mu}(\tilde{u}_{\mu})-E_{\mu}(u_{\mu}) +\frac{t^{2}}{2}\int_{\R^{N}}|\nabla \tilde{U}_{\varepsilon}|^{2}dx-\frac{\mu t^{2^{*}}}{2^{*}}\int_{\R^{N}}\tilde{U}_{\varepsilon}^{2^{*}}dx\\
    & + t \int_{\R^{N}}\left[V(x)-s^{2}V(sx)\right] \tilde{u}_{\mu}\cdot  \tilde{U}_{\varepsilon}dx+t \lambda_{\mu} s^{2} \int_{\R^{N}}\tilde{u}_{\mu}\tilde{U}_{\varepsilon}dx\\
  & +\frac{t^{2}}{2}\int_{\R^{N}} V \tilde{U}_{\varepsilon}^{2}dx-\mu t^{2^{*}} \int_{\R^{N}} \tilde{u}_{\mu} \tilde{U}_{\varepsilon}^{2^{*}-1}dx.
  \end{aligned}
\end{equation}
Now, on the one hand, one can see that there exists $t_{0}>0$ such that
  \begin{equation*}
    c_{\mu}< m_{\mu}+\frac{1}{N}S^{\frac{N}{2}}\mu^{1-\frac{N}{2}},
  \end{equation*}
for $0<t<\frac{1}{t_{0}}$ or $t>t_{0}$ and $\eps$ sufficiently small, just depending on $t_0$. On the other hand, for $ \frac{1}{t_{0}} \leq t \leq t_{0}$, we estimate the right-hand side separately. To begin with, by \eqref{est:struwe}, we have
\begin{equation*}
\begin{aligned}
  \max_{t} \left(\frac{t^{2}}{2}\int_{\R^{N}}|\nabla \tilde{U}_{\varepsilon}|^{2}dx-\frac{\mu t^{2^{*}}}{2^{*}}\int_{\R^{N}}\tilde{U}_{\varepsilon}^{2^{*}}dx \right)=&\max_{t} \left( \frac{t^{2}}{2}\int_{\R^{N}}|\nabla U_{\varepsilon}|^{2}dx-\frac{\mu t^{2^{*}}}{2^{*}}\int_{\R^{N}}U_{\varepsilon}^{2^{*}}dx\right)\\
   =& \frac{1}{N}S^{\frac{N}{2}}\mu^{1-\frac{N}{2}}+O(\varepsilon^{N-2}).
   \end{aligned}
\end{equation*}
Then,  \eqref{eq:before_divide} writes
\begin{equation}\label{eq:after_divide}
  E_{\mu}(\tilde{W}_{t,\varepsilon})\leq m_{\mu}+\frac{1}{N}S^{\frac{N}{2}}\mu^{1-\frac{N}{2}}+O(\varepsilon^{N-2}) + I + II + III,
\end{equation}
where
\begin{equation*}
\begin{split}
I:=&E_{\mu}(\tilde{u}_{\mu})-E_{\mu}(u_{\mu})+t \lambda_{\mu} s^{2} \int_{\R^{N}}\tilde{u}_{\mu}\tilde{U}_{\varepsilon}dx,\\
II:=&t \int_{\R^{N}}\left[V(x)-s^{2}V(sx)\right] \tilde{u}_{\mu}\cdot  \tilde{U}_{\varepsilon}dx,\\
III:=&\frac{t^{2}}{2}\int_{\R^{N}} V \tilde{U}_{\varepsilon}^{2}dx-\mu t^{2^{*}} \int_{\R^{N}} \tilde{u}_{\mu} \tilde{U}_{\varepsilon}^{2^{*}-1}dx.
\end{split}
\end{equation*}
First, we estimate $I$. We have 
\begin{equation*}
\begin{aligned}
 E_{\mu}(\tilde{u}_{\mu})-E_{\mu}(u_{\mu})&=\int_{\R^{N}}V(x)\left[s^{N-2}u_{\mu}^{2}(sx)-u^{2}_{\mu}(x)\right]dx\\
  &=\int_{\R^{N}}V(x)\int_{1}^{s}\left[ \frac{N-2}{2}z^{N-3}u_{\mu}^{2}(zx)+z^{N-2}u_{\mu}(zx)\nabla u_{\mu}(zx) \cdot x \right]dz dx\\
  &= \int_{1}^{s} z^{-3}\left(\int_{\R^{N}}V(x)\left( \frac{N-2}{2}u_{\mu}^{2}(zx)+u_{\mu}(zx)\nabla u_{\mu}(zx) \cdot zx \right) d(zx) \right)dz\\
  &= \int_{1}^{s} \int_{\R^{N}}z^{-3}V\left(\frac{x}{z}\right)\left( \frac{N-2}{2}u_{\mu}^{2}(x)+u_{\mu}(x)\nabla u_{\mu}(x) \cdot x \right) dx\, dz.
\end{aligned}
\end{equation*}
Since $u_{\mu}$ solves \eqref{eq:main1}, we know $u_{\mu}$ satisfies \eqref{Pohozaev_mini}, which implies that
\begin{equation*}
\lambda_{\mu} =\frac{2-N}{2}\int_{\R^{N}}V(x)u_{\mu}^{2}dx-\int_{\R^{N}}V(x)u_{\mu}\nabla u_{\mu} \cdot x\,dx.
\end{equation*}
Thus, we obtain
\[
E_{\mu}(\tilde{u}_{\mu})-E_{\mu}(u_{\mu})= \int_{1}^{s} \left(\int_{\R^{N}}\left[z^{-3}V\left(\frac{x}{z}\right)-V(x)\right]\left( \frac{N-2}{2}u_{\mu}^{2}(x)+u_{\mu}(x)\nabla u_{\mu}(x) \cdot x \right) dx-\lambda_{\mu}\right) dz.
\]
Recalling the definition of $s$ in \eqref{eq:def_s}, and since $z\in[1,s]$, we have that, for 
$\varepsilon$ small, for every $\delta >0$ there exists $\sigma>0$ independent of $z$, such that
\begin{equation*}
  \left|\int_{\R^{N}\setminus B_{\sigma}}[z^{-3}V(\frac{x}{z})-V(x)]\left( \frac{N-2}{2}u_{\mu}^{2}(x)+u_{\mu}(x)\nabla u_{\mu}(x) \cdot x \right) dx\right|< \delta.
\end{equation*}
On the other hand, recall that $V=V_{1}+V_{2}$, where $V_{1}\in L^{\frac{N}{2}}(\R^{N})$ and $V_{2} \in L^{\infty}(\R^{N})$. We  obtain $\tilde{V}=V \cdot \chi _{B_{\sigma}} \in L^{\frac{N}{2}}(\R^{N})$. Also, we have $\tilde{W}=W \cdot \chi _{B_{\sigma}} \in L^{N}(\R^{N})$. Thus, by Lemma \ref{Cont_V}, we have
\begin{equation*}
  \left\|z^{-3}\tilde{V}\left(\frac{x}{z}\right)-\tilde{V}(x)\right\|_{L^{\frac{N}{2}}(\R^{N})}\rightarrow0 \ \ \ \mbox{and} \ \ \ \left\|z^{-2}\tilde{W}\left(\frac{x}{z}\right)-\tilde{W}(x)\right\|_{L^{N}(\R^{N})}\rightarrow0,
\end{equation*}
as $z\rightarrow1$. Then, for $\varepsilon$ small, we have
\begin{equation*}
\begin{aligned}
  &\left|\int_{ B_{\sigma}}\left[z^{-3}V\left(\frac{x}{z}\right)-V(x)\right]\left( \frac{N-2}{2}u_{\mu}^{2}(x)+u_{\mu}(x)\nabla u_{\mu}(x) \cdot x \right) dx\right|\\
  &\leq \frac{N-2}{2} \left\|z^{-3}V\left(\frac{x}{z}\right)-V(x)\right\|_{L^{\frac{N}{2}}(B_{\sigma})}\left\|u_{\mu}\right\|^{2}_{2^{*}}+\left\|z^{-2}W\left(\frac{x}{z}\right)-W(x)\right\|_{L^{N}(B_{\sigma})}\left\|u_{\mu}\right\|_{2^{*}}\left\|\nabla u_{\mu}\right\|_{2}\\
  &\leq 2 \delta.
  \end{aligned}
\end{equation*}
From the above estimates, we have $E_{\mu}(\tilde{u}_{\mu})-E_{\mu}(u_{\mu}) \leq \int_{1}^{s} (3\delta -\lambda) dz$, and since $\delta>0$ is arbitrary, we obtain
\begin{equation*}
  E_{\mu}(\tilde{u}_{\mu})-E_{\mu}(u_{\mu}) \le  -\lambda_{\mu}( s-1)+ o(|s-1|),
\end{equation*}
which implies that
\begin{equation*}
I \leq -\lambda_{\mu}( s-1)+t \lambda_{\mu} s^{2} \int_{\R^{N}}\tilde{u}_{\mu}\tilde{U}_{\varepsilon}dx+ o(|s-1|)=-\lambda_{\mu}( s-1)+t \lambda_{\mu}  \int_{\R^{N}}u_{\mu}U_{\varepsilon}dx+ o(|s-1|).
\end{equation*}
Since $3\leq N \leq 5$, using \eqref{est_s} we obtain
\begin{equation}\label{eq:es_I}
I \leq \frac{-\lambda_{\mu}}{s+1} t^{2}\|U_{\eps}\|^{2}_{2}+o(\varepsilon^{\frac{N-2}{2}})=o(\varepsilon^{\frac{N-2}{2}}).
\end{equation}
Turning to $II$, since $V \in L^{\infty}(B_{R})$, we have that $V \in L^{r}(B_{R})$ for $r> \frac{N}{2}$, where $R$ is defined as in assumption \eqref{eq:final_ass}. Then, by Lemma \ref{Cont_V} and \eqref{est:Up}, we obtain that
\begin{equation}\label{eq:es_II}
  \begin{aligned}
 II\leq t\left|\int_{\R^{N}}\left[V(x)-s^{2}V(sx)\right] \tilde{u}_{\mu}\cdot  \tilde{U}_{\varepsilon}dx \right| &\leq \int_{\R^{N}} \left| s^{-2}V\left(\frac{x}{s}\right)-V(x) \right|u_{\mu}U_{\varepsilon}dx\\
 &\leq C\|u_{\mu}\|_{\infty}\left\|s^{-2}V\left (\frac{x}{s}\right)-V(x)\right\|_{r}\|U_{\varepsilon}\|_{r^{\prime}}\\
 &=o(\varepsilon^{\frac{N-2}{2}}),
  \end{aligned}
\end{equation}
where $1< r^{\prime}< \frac{N}{N-2}$. Finally, we consider $III$. By \eqref{eq:final_ass}, 
\eqref{BrL} and \eqref{est:struwe}, recalling that the cut-off $\eta$ is supported in $B_{3R/2}$, 
we infer that
\begin{equation}\label{eq:est_pot}
  \int_{\R^{N}} V \tilde{U}_{\varepsilon}^{2}dx\leq s^{-2}\|V\|_{L^\infty(B_{2R})} \| U_{\varepsilon}\|^{2}_{2}= o(\varepsilon^{\frac{N-2}{2}}),
\end{equation}
as long as $N=3,4,5$. By \eqref{est_u1U}, we have
\begin{equation}\label{eq:es_III}
 III=-c_{2}\varepsilon^{\frac{N-2}{2}}+o(\varepsilon^{\frac{N-2}{2}}).
\end{equation}
Thus, plugging \eqref{eq:es_I}, \eqref{eq:es_II} and \eqref{eq:es_III} into \eqref{eq:after_divide}, we obtain
\begin{equation*}
\begin{aligned}
   E_{\mu}(\tilde{W}_{t,\varepsilon})\leq& m_{\mu}+\frac{1}{N}S^{\frac{N}{2}}\mu^{1-\frac{N}{2}} 
   -c_{2}\varepsilon^{\frac{N-2}{2}}+o(\varepsilon^{\frac{N-2}{2}}),
   \end{aligned}
\end{equation*}
which, combined with \eqref{MP_COMP}, implies that $\eqref{EST_MP}$ is true.
\end{proof}

Subsequently, we present a theorem from Ghoussoub's book \cite[Thm. 4.1]{MR1251958}, 
which is essential for constructing a localized Palais-Smale sequence.
\begin{lemma}[{\cite[Thm. 4.1]{MR1251958}}]\label{closesequence}
Let $X$ be a Hilbert manifold and let $E_{\mu}\in C^{1}(M,\mathbb{R})$ be a given functional. Let $K\subset X$ be compact and consider a subset
\[\Gamma\subset \{\gamma\subset X:\gamma \ \text{is compact }, \ K\subset \gamma\}\]
which is invariant with respect to deformations leaving $K$ fixed. Assume that
\[\max_{u\in K}E_{\mu}(u)< c_{\mu}:=\inf\limits_{\gamma\in \Gamma}\max\limits_{u\in \gamma}E_{\mu}(u).\]
Let $(\gamma_{n})_n\subset \Gamma$ be a sequence such that
\[
\max_{u\in \gamma_{n}}E_{\mu}(u)\to c_{\mu}
\qquad\text{ as }n\to+\infty.
\]
Then there exists a sequence $(v_{n})_n\subset X$ such that, as $n\to+\infty$,
\begin{enumerate}
\item $E_{\mu}(v_{n})\to c_{\mu}$,
\item $\|\nabla_X E_{\mu}(v_{n})\|\to0$,
\item $dist(v_{n},\gamma_{n})\to 0$.
\end{enumerate}
\end{lemma}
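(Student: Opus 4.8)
The plan is to deduce Lemma~\ref{closesequence} from the classical quantitative deformation lemma on the Hilbert manifold $X$, via a contradiction argument preceded by a short quantitative reduction. First I would set $e_n:=\max_{u\in\gamma_n}E_\mu(u)$, note that $e_n\ge c_\mu$ and $e_n\to c_\mu$, and introduce a single ``error budget'' $\eps_n:=e_n-c_\mu+n^{-2}>0$, which still vanishes. I would then reduce the statement to the following per‑index claim: for all large $n$ there is $v_n\in X$ with $|E_\mu(v_n)-c_\mu|\le\eps_n$, $\dist(v_n,\gamma_n)\le\sqrt{\eps_n}$ and $\|\nabla_X E_\mu(v_n)\|\le 4\sqrt{\eps_n}$. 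Since $\eps_n\to0$, sending $n\to\infty$ (and choosing $v_n$ arbitrarily for the finitely many small indices) then yields the three conclusions simultaneously.

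To prove the per‑index claim for a fixed large $n$, I would argue by contradiction, supposing $\|\nabla_X E_\mu(v)\|>4\sqrt{\eps_n}$ on the whole tube $\mathcal N_n:=\{v\in X:\dist(v,\gamma_n)<\sqrt{\eps_n},\ E_\mu(v)>c_\mu-\eps_n\}$. Setting $\rho:=c_\mu-\max_K E_\mu>0$ and using $\eps_n<\rho$ for $n$ large, one gets $E_\mu<c_\mu-\eps_n$ on $K$, hence $K\cap\mathcal N_n=\varnothing$. Then I would invoke the quantitative deformation lemma: build a locally Lipschitz pseudo‑gradient vector field for $E_\mu$ tangent to $X$, cut it off to vanish outside $\mathcal N_n$, normalize it to norm $\tfrac12\sqrt{\eps_n}$ there, and take its time‑$1$ flow $\eta$. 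By construction $\eta$ is the identity off $\mathcal N_n$ (in particular on $K$), does not increase $E_\mu$, displaces points by strictly less than $\sqrt{\eps_n}$, and decreases $E_\mu$ at rate at least $2\eps_n$ while the trajectory stays in $\mathcal N_n$. A short case analysis for $v\in\gamma_n$ ($E_\mu(v)\le c_\mu-\eps_n$, so $v$ is fixed; or the trajectory reaches the level $c_\mu-\eps_n$ and stays below it; or it flows for the full time and loses at least $2\eps_n$ from a starting value $\le e_n<c_\mu+\eps_n$) shows $E_\mu<c_\mu$ everywhere on $\eta(\gamma_n)$.

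Finally, since $(\eta_t)_{t\in[0,1]}$ is a deformation fixing $K$ and $\gamma_n\in\Gamma$, the invariance hypothesis gives $\eta(\gamma_n)\in\Gamma$; moreover $\eta(\gamma_n)$ is compact and contains $K=\eta(K)\subset\eta(\gamma_n)$. Hence $c_\mu=\inf_{\gamma\in\Gamma}\max_\gamma E_\mu\le\max_{\eta(\gamma_n)}E_\mu<c_\mu$, a contradiction that establishes the claim and hence the lemma. I expect the main obstacle — really the only delicate point — to be the bookkeeping of the single scale $\eps_n$: it must simultaneously dominate the energy gap $e_n-c_\mu$, fix the localization radius $\sqrt{\eps_n}$, and set the gradient threshold, all while tending to $0$; and one must use the \emph{strict} inequality $\max_K E_\mu<c_\mu$ precisely to keep the deformation tube away from $K$ (without that gap the argument collapses). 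Setting up the pseudo‑gradient flow on the Hilbert manifold $X$ rather than on a Hilbert space, and checking that the truncated normalized flow remains genuinely inside $\mathcal N_n$, are standard and I would not dwell on them.
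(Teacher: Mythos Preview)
The paper does not prove this lemma at all: it is stated as a citation of \cite[Thm.~4.1]{MR1251958} and used as a black box, so there is no ``paper's own proof'' to compare against. Your sketch is the standard quantitative deformation argument behind Ghoussoub's result, and it is essentially correct; the choice of a single scale $\eps_n$ governing energy window, tube radius and gradient threshold is exactly the right bookkeeping, and the use of the strict gap $\max_K E_\mu<c_\mu$ to keep $K$ outside the deformation tube is the crucial point you correctly isolate.

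One minor point worth tightening if you write this out in full: the numerical constants in ``gradient $>4\sqrt{\eps_n}$, velocity $\tfrac12\sqrt{\eps_n}$, decrease rate $\ge 2\eps_n$'' depend on the precise pseudo-gradient inequalities you adopt (e.g.\ $\langle \nabla E_\mu,g\rangle\ge\|\nabla E_\mu\|^2$ and $\|g\|\le 2\|\nabla E_\mu\|$ give a rate $\ge\eps_n$, not $2\eps_n$), so you may need to adjust either the threshold or the flow time by a harmless factor. Also, to make the ``trajectory stays in $\mathcal N_n$'' step clean, it is customary to use two nested tubes (cut-off equal to $1$ on the inner one, $0$ outside the outer one) rather than a single $\mathcal N_n$; your displacement bound $\tfrac12\sqrt{\eps_n}$ is precisely what keeps trajectories starting on $\gamma_n$ inside the inner tube. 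These are routine adjustments and do not affect the validity of your outline.
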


In the following lemma, we construct a bounded Palais-Smale sequence made of ``almost positive'' functions. We adopt the proof strategy from \cite[Proposition 3.4.]{MR4443784}, but since some of the conclusions will be needed later, we provide part of the proof here.

\begin{lemma}\label{boundedPS}
There exists a Palais-Smale sequence $(v_{n})_{n}$ for $E_{\mu}$ constrained on $\mathcal{M}$ at the level $c_{\mu}$, namely
\begin{equation}\label{PSlimit}
  E_{\mu}(v_{n})\rightarrow c_{\mu}, \ \ \ \ \nabla_{\mathcal{M}}E_{\mu}(v_{n})\rightarrow 0, \ \ \ \mbox{as } \ n\rightarrow +\infty,
\end{equation}
such that
\begin{equation}\label{Pohozaev}
  \|\nabla v_{n}\|_{2}^{2}-\mu \|v_{n}\|_{2^{*}}^{2^{*}}+\frac{1}{2}\int_{\mathbb{R}^{N}}V(x)(Nv_{n}^{2}+2v_{n} \nabla v_{n} \cdot x)dx\rightarrow 0, \ \ \ \mbox{as } \ n\rightarrow \infty,
\end{equation}
\begin{equation}\label{non_negative}
  \lim_{n\rightarrow +\infty}\|(v_{n})^{-}\|_{2}=0.
\end{equation}
Moreover, under the same conditions in Theorem \ref{Thm_neg}, the sequence $(v_{n})_{n}$ is bounded and the associated Lagrange multipliers $\lambda_{n}$ are bounded too.
\end{lemma}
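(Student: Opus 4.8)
The plan is to combine Jeanjean's augmented functional $\tilde E_\mu$ with Ghoussoub's localized min-max principle (Lemma~\ref{closesequence}), inserting an absolute-value reduction to obtain the almost-positivity \eqref{non_negative}. I would first note that $E_\mu(|u|)=E_\mu(u)$ for every $u\in H^1(\R^N)$, because passing to $|u|$ leaves each of the three integrals in \eqref{func:main1} unchanged; moreover $u\mapsto|u|$ is continuous on $H^1(\R^N)$ and commutes with the scaling $u\mapsto e^{\frac{N}{2}s}u(e^s\cdot)$. Choosing $w\in\mathcal{M}$ non-negative in \eqref{level_Moun} (possible since $E_\mu$ and $\mathcal{M}$ are even) and replacing any near-optimal $\gamma_n\in\Gamma$ by $t\mapsto|\gamma_n(t)|$, one may assume the minimizing sequence of paths satisfies $\gamma_n(t)\ge0$ for all $t$; then $\tilde\gamma_n:=(\gamma_n,0)\in\tilde\Gamma$ is minimizing for $\tilde c_\mu=c_\mu$ (Lemma~\ref{ener_compare}). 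I would then apply Lemma~\ref{closesequence} with $X=\mathcal{M}\times\R$, $\tilde E_\mu$ in place of $E_\mu$, $K=\{(w_{h_0},0),(w_{h_1},0)\}$ and $\Gamma=\tilde\Gamma$ --- its hypothesis $\max_K\tilde E_\mu<c_\mu$ holding because $\tilde E_\mu(w_{h_i},0)=E_\mu(w_{h_i})<E_*\le c_\mu$, see \eqref{eq:Rbar}, \eqref{eq:positive_c_mu} --- to get $(u_n,s_n)_n\subset\mathcal{M}\times\R$ with $\tilde E_\mu(u_n,s_n)\to c_\mu$, $\|\nabla_{\mathcal{M}\times\R}\tilde E_\mu(u_n,s_n)\|\to0$ and $\dist\big((u_n,s_n),\tilde\gamma_n\big)\to0$; in particular $s_n\to0$ and $\|u_n-\gamma_n(t_n)\|_{H^1}\to0$ for suitable $t_n\in[0,1]$.

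Next I would set $v_n:=e^{\frac{N}{2}s_n}u_n(e^{s_n}\cdot)\in\mathcal{M}$ and transfer the information to $E_\mu$, as in Remark~\ref{equal_twopath}: since $(s_n)_n$ is bounded, the $\mathcal{M}$-component of $\nabla_{\mathcal{M}\times\R}\tilde E_\mu(u_n,s_n)\to0$ gives $\nabla_{\mathcal{M}}E_\mu(v_n)\to0$, while $E_\mu(v_n)=\tilde E_\mu(u_n,s_n)\to c_\mu$; this is \eqref{PSlimit}. A direct computation shows that $\partial_s\tilde E_\mu(u,s)$ equals the left-hand side of the Pohozaev identity \eqref{Pohozaev_mini} evaluated at $e^{\frac{N}{2}s}u(e^s\cdot)$, so $\partial_s\tilde E_\mu(u_n,s_n)\to0$ is exactly \eqref{Pohozaev} for $v_n$. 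Finally, since the scaling preserves the $L^2$ norm and $\gamma_n(t_n)\ge0$, we get $\|v_n^-\|_2=\|u_n^-\|_2=\|u_n^- - \gamma_n(t_n)^-\|_2\le\|u_n-\gamma_n(t_n)\|_2\to0$, which is \eqref{non_negative}.

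It remains to establish boundedness of $(v_n)_n$ and of the Lagrange multipliers $\lambda_n:=\langle E_\mu'(v_n),v_n\rangle$ under the assumptions of Theorem~\ref{Thm_neg}; this parallels Lemma~\ref{Comp_LCGS}. Using $\|v_n\|_2=1$, I would combine $E_\mu(v_n)=c_\mu+o(1)$ with \eqref{Pohozaev} to eliminate the critical term and obtain
\[
\frac1N\|\nabla v_n\|_2^2=c_\mu+\frac{N-4}{4}\int_{\R^N}V v_n^2\,dx+\frac{N-2}{2N}\int_{\R^N}V v_n\,\nabla v_n\cdot x\,dx+o(1).
\]
Estimating the right-hand side via $\tfrac{N-4}{4}V\le\tfrac14[(N-4)V]^+$, the subadditivity of the positive part, and Lemma~\ref{ineq_VW} applied to $V=V_1+V_2$, $W=W_1+W_2$, the contributions carrying $\|[(N-4)V_1]^+\|_{N/2}$ and $\|W_1\|_N$ produce a multiple of $\|\nabla v_n\|_2^2$ which, by the smallness in \eqref{AS_N4GS1} (for $C_1$ small enough), is absorbed into the left-hand side; one is left with $\delta_1\|\nabla v_n\|_2^2\le c_\mu+C\big(\|[(N-4)V_2]^+\|_\infty+\|W_2\|_\infty\|\nabla v_n\|_2\big)+o(1)$ for some $\delta_1>0$. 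Since a quadratic dominates a linear term, $\|\nabla v_n\|_2$ is bounded, hence $(v_n)_n$ is bounded in $H^1(\R^N)$; then $\lambda_n=\|\nabla v_n\|_2^2+\int_{\R^N}V v_n^2\,dx-\mu\|v_n\|_{2^*}^{2^*}$ is bounded too, by Lemma~\ref{ineq_VW} and the Sobolev inequality.

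The step I expect to be the main obstacle is this last one. Unlike the combined-nonlinearity case, where a focusing lower-order term dominates, the sign-changing quadratic perturbation by itself gives no control of $\|\nabla v_n\|_2$; boundedness is recovered only because the possibly large part of $V$ (namely $V_2$ and $W_2$) enters the estimate linearly, while its ``non-small'' behaviour is ruled out by \eqref{AS_N4GS1}. This is precisely where the hypotheses of Theorem~\ref{Thm_neg} are genuinely used, tying this lemma to the smallness already needed for the ground-state property. The remaining ingredients --- the min-max construction and the transfer from $\tilde E_\mu$ to $E_\mu$ --- are routine.
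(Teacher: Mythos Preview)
Your proposal is correct and follows essentially the same route as the paper: apply Ghoussoub's min-max principle (Lemma~\ref{closesequence}) to the augmented functional $\tilde E_\mu$ on $\mathcal{M}\times\R$, with non-negative minimizing paths $(\gamma_n,0)$; transfer back to $E_\mu$ via $v_n=e^{\frac{N}{2}s_n}u_n(e^{s_n}\cdot)$; read off \eqref{Pohozaev} from $\partial_s\tilde E_\mu\to0$; and combine the energy level with \eqref{Pohozaev} to bound $\|\nabla v_n\|_2^2$ using Lemma~\ref{ineq_VW} and the smallness in \eqref{AS_N4GS1}.

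The one genuine (minor) difference is in the boundedness step. The paper, after reaching the same identity
\[
\|\nabla v_n\|_2^2=Nc_\mu+\frac{N(N-4)}{4}\int_{\R^N}Vv_n^2\,dx+\frac{N-2}{2}\int_{\R^N}Vv_n\nabla v_n\cdot x\,dx+o(1),
\]
invokes the upper bound $c_\mu<\frac{1}{N}S^{N/2}\mu^{1-N/2}$ from Lemma~\ref{upbound_moun} and then uses \eqref{AS_N4GS2} to obtain an explicit bound of the form $\|\nabla v_n\|_2^2\le C\mu^{1-N/2}$. You instead observe that, once the $V_1,W_1$ contributions are absorbed via \eqref{AS_N4GS1}, the remaining inequality is quadratic in $\|\nabla v_n\|_2$ with a linear right-hand side, so boundedness follows for any finite value of $c_\mu$ without appealing to Lemma~\ref{upbound_moun} or \eqref{AS_N4GS2}. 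This is a legitimate simplification: for the bare statement of Lemma~\ref{boundedPS} your argument suffices and uses strictly fewer hypotheses. The paper's extra work buys a quantitative bound uniform in $\mu$, which is not needed here but is in the spirit of the explicit estimates tracked elsewhere in the paper.
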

\begin{proof}
Let $\xi_{n}\in \Gamma$ be such that
\begin{equation*}
  \max_{t\in[0,1]}E_{\mu}(\xi_{n})\rightarrow c_{\mu}, \ \ \ \mbox{as } \ n\rightarrow +\infty.
\end{equation*}
Since $E_{\mu}$ and $\mathcal{M}$ are even, we can assume $\xi_{n}(t)\geq0$ in $\R^{N}$, for every $t$ and $n$. To construct the Palais-Smale sequence, we apply Lemma \ref{closesequence} to $\tilde{E}_{\mu}$. First, we need to check the assumptions in Lemma \ref{closesequence}. We know
\begin{equation*}
  K=\{(w_{h_{0}},0),(w_{h_{1}},0)\}, \ \ \ \Gamma=\tilde{\Gamma}, \ \ \ X:=\mathcal{M}\times\R, \ \ \ \gamma_{n}=\{(\xi_{n}(t),0):t\in[0,1]\}.
\end{equation*}
 By Lemma \ref{ener_compare}, we know $\tilde{c}_{\mu}=c_{\mu}$, which implies that
\begin{equation*}
  \max_{t\in[0,1]}E_{\mu}(\gamma_{n})\rightarrow \tilde{c}_{\mu}, \ \ \ \mbox{as } \ n\rightarrow +\infty.
\end{equation*} 
Therefore, we can apply Lemma \ref{closesequence} to obtain that there exists a sequence 
$(u_{n},s_{n})_{n} \in \mathcal{M}\times \R$ such that
\begin{equation*}
  \tilde{E}_{\mu}(u_{n},s_{n})\rightarrow c_{\mu}, \ \ \|\nabla_{\mathcal{M}}\tilde{E}_{\mu}(u_{n},s_{n})\|\rightarrow 0 \ \ \|(u_{n},s_{n})-(\xi_{n},0)\|\rightarrow 0, \ \mbox{for some } \ t_{n}\in[0,1].
\end{equation*}
Defining $v_{n}(x)=e^{\frac{Ns}{2}}u_{n}(e^{s}x)$, we readily obtain that $(v_n)_n$ satisfies 
\eqref{PSlimit} and \eqref{non_negative}. At the same time, differentiating $\tilde{E}_{\mu}(u,s)$ with respect to $s$, we obtain \eqref{Pohozaev}.

Subsequently, we consider the boundedness of the Palais-Smale sequence (and of the associated Lagrange multipliers). Setting
\begin{equation*}
  a_{n}:=\|v_{n}\|_{2}^{2}, \ \ \ b_{n}=\|v_{n}\|_{2^{*}}^{2^{*}}, \ \ \ c_{n}:=\int_{\R^{N}}V(x)v_{n}^{2}dx, \ \ \ d_{n}:=\int_{\R^{N}}V(x)v_{n}\nabla v_{n}\cdot x dx,
\end{equation*}
we can deduce that
\begin{equation}\label{pre_ener}
  a_{n}+c_{n}-\frac{2\mu}{2^{*}}b_{n}=2c_{\mu}+o(1),
\end{equation}
\begin{equation}\label{pre_equ}
  a_{n}+c_{n}=\lambda_{n}+\mu b_{n}+o(1)((a_{n}+1)^{1/2}),
\end{equation}
\begin{equation}\label{aft_Pohozaev}
  a_{n}-\mu b_{n}+\frac{N}{2}c_{n}+d_{n}=o(1).
\end{equation}
Combining \eqref{pre_ener} and \eqref{aft_Pohozaev}, we obtain
\begin{equation*}
  \frac{2\mu}{N}b_{n}=2c_{\mu}+\frac{N-2}{2}c_{n}+d_{n}+o(1).
\end{equation*}
Then, by \eqref{pre_ener}, we have
\begin{equation*}
\begin{aligned}
  a_{n}&=Nc_{\mu}+\left(\left(\frac{N-2}{2}\right)^{2}-1\right)c_{n}+\frac{N-2}{2}d_{n}+o(1)\\
  &=Nc_{\mu}+\frac{N(N-4)}{4}c_{n}+\frac{N-2}{2}d_{n}+o(1).
\end{aligned}
\end{equation*}
Set $V^{*}_{i}:=\left[(N-4)V_{i}(x)\right]^{+}$, where $i=1,2$. By Lemma \ref{ineq_VW},  we have
\begin{equation*}
  a_{n}\leq N c_{\mu} +\frac{N}{4}\left(S^{-1}\|V_{1}^{*}\|_{\frac{N}{2}}a_{n}+\|V_{2}^{*}\|_{\infty}\right)+\frac{N-2}{2}\left(S^{-\frac{1}{2}}\|W_{1}\|_{N}a_{n}+\|W_{2}\|_{\infty}a_{n}^{\frac{1}{2}}\right)+o(1).
\end{equation*}
By Lemma \ref{upbound_moun}, we know that $c_{\mu}< \frac{1}{N}S^{\frac{N}{2}}\mu^{1-\frac{N}{2}}$, which implies that
\begin{equation*}
\left(1-\frac{N}{4} S^{-1}\|V_{1}^{*}\|_{\frac{N}{2}}- \frac{N-2}{2} S^{-\frac{1}{2}}\|W_{1}\|_{N} \right)a_{n}-\frac{N-2}{2}\|W_{2}\|_{\infty}a_{n}^{\frac{1}{2}}< S^{\frac{N}{2}}\mu^{1-\frac{N}{2}}+\frac{N}{4}\|V_{2}^{*}\|_{\infty} +o(1).
\end{equation*}
By \eqref{AS_N4GS1}, choosing an appropriate positive constant $C_{1}$, we have
\begin{equation*}
  1-\frac{N}{4} S^{-1}\|V_{1}^{*}\|_{\frac{N}{2}}- \frac{N-2}{2} S^{-\frac{1}{2}}\|W_{1}\|_{N}\geq 1-\frac{N}{4} S^{-1}C_{1}- \frac{N-2}{2} S^{-\frac{1}{2}}C_{1} \geq \delta_{2}>0,
\end{equation*}
where $\delta_{2}$ is a positive constant, independent of both $V_{2}$ and $W_{2}$. Actually, we know 
that $\mu^{1-\frac{N}{2}}=\rho^{-2}$. Therefore, letting $A_{n}=\mu^{-1+\frac{N}{2}}a_{n}=\rho^{2}a_{n}$, 
we know that
\begin{equation*}
  \delta_{2}A_{n}-\frac{N-2}{2}\|W_{2}\|_{\infty}\mu^{\frac{1}{2}(-1+\frac{N}{2})}A_{n}^{\frac{1}{2}}-\frac{N}{4}\|V_{2}^{*}\|_{\infty}\mu^{-1+\frac{N}{2}}< S^{\frac{N}{2}}+o(1).
\end{equation*}
If $A_{n}\leq 1$, then $a_{n}$ is bounded. If instead $A_{n}\geq 1$, by \eqref{AS_N4GS2} we obtain
\begin{equation*}
  \delta_{2}A_{n}-\frac{N-2}{2}\|W_{2}\|_{\infty}\mu^{\frac{1}{2}(-1+\frac{N}{2})}A_{n}^{\frac{1}{2}}-\frac{N}{4}\|V_{2}^{*}\|_{\infty}\mu^{-1+\frac{N}{2}}\geq  \left(\delta_{2} - \frac{N-2}{2} C_{2}- \frac{N}{4} C_{2}\right)A_{n} \geq\frac{\delta_{2}}{2}A_{n},
\end{equation*}
for a suitable choice of $C_2>0$. Resuming,
\begin{equation*}
  a_{n}\leq \mu^{1-\frac{N}{2}}\cdot \max\left\{1,\frac{2}{\delta_{2}} S^{\frac{N}{2}}\right\}.
\end{equation*}
Thus, we can conclude that the sequence $a_{n}$ is bounded. Finally, by \eqref{pre_equ}, the sequence 
$(\lambda_{n})_{n}$ is bounded too.
\end{proof}

From the above lemma, we obtain that both $(u_{n})_{n}$ and the corresponding sequence of multipliers $(\lambda_{n})_{n}$ are bounded. Then, we can deduce that there exist $u^{0}$ and $\lambda^{0}$ such that, 
up to subsequences, 
\begin{equation*}\label{MP_WK}
  u_{n}\rightharpoonup u^{0}\ \ \ \mbox{weakly in } \ H^{1}(\R^{N}), \ \ \ \ \lambda_{n}\rightarrow \lambda^{0} \ \ \ \mbox{in } \ \R.
\end{equation*}

Then, we adopt an argument similar to Lemma \ref{cor:split} to rule out Case iii) in Lemma \ref{cor:split}, namely $\lambda^{0}\geq 0$. More precisely, we have the following lemma.

\begin{lemma}\label{lemma:MPlambdaN}
The weak limit  $u^{0}$ of the Palais-Smale sequence $(u_{n})_{n}$ found in Lemma \ref{boundedPS} is not trivial.
\end{lemma}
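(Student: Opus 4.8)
The plan is to argue by contradiction, assuming $u^{0}\equiv0$, and to deduce that this forces $c_{\mu}\ge\frac1N S^{N/2}\mu^{1-N/2}$, which is incompatible with the sharp upper bound of Lemma \ref{upbound_moun}. In the language of Lemma \ref{cor:split}, the alternatives with $\lambda^{0}<0$ are quickly seen to be incompatible with $u^{0}=0$ (in case i) one would get $E_{\mu}(v_{n})\to E_{\mu}(0)=0$ against $c_{\mu}\ge E_{*}>0$; in case ii) one would get $c_{\mu}\ge E_{\mu}(0)+\frac1N S^{N/2}\mu^{1-N/2}$, again against Lemma \ref{upbound_moun}); so the only delicate point is to exclude the alternative $u^{0}\equiv0$, $\lambda^{0}\ge0$. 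The crucial observation is that one should use the asymptotic Pohozaev identity \eqref{Pohozaev} (available from Lemma \ref{boundedPS}) rather than the equation tested against $v_{n}$: in this way no sign information on $\lambda^{0}$ is needed.

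Concretely, I first recall from Lemma \ref{boundedPS} that $(v_{n})_{n}$ is bounded in $H^{1}(\R^{N})$, that $E_{\mu}(v_{n})\to c_{\mu}$, and that \eqref{Pohozaev} holds. Since $v_{n}\rightharpoonup u^{0}=0$ weakly in $H^{1}(\R^{N})$, Lemma \ref{lem:VW_loss} yields $\int_{\R^{N}}V v_{n}^{2}\,dx\to0$ and $\int_{\R^{N}}V v_{n}\nabla v_{n}\cdot x\,dx\to0$. Inserting these into \eqref{Pohozaev} and into the expression of $E_{\mu}(v_{n})$ gives
\[
\|\nabla v_{n}\|_{2}^{2}-\mu\|v_{n}\|_{2^{*}}^{2^{*}}=o(1),\qquad
\tfrac12\|\nabla v_{n}\|_{2}^{2}-\tfrac{\mu}{2^{*}}\|v_{n}\|_{2^{*}}^{2^{*}}=c_{\mu}+o(1).
\]
Eliminating $\mu\|v_{n}\|_{2^{*}}^{2^{*}}$ between the two and using $\tfrac12-\tfrac1{2^{*}}=\tfrac1N$, I obtain $\|\nabla v_{n}\|_{2}^{2}\to Nc_{\mu}$ and $\mu\|v_{n}\|_{2^{*}}^{2^{*}}\to Nc_{\mu}$.

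Now the Sobolev inequality \eqref{ineq:sobolev} gives $\|\nabla v_{n}\|_{2}^{2}\ge S\big(\|v_{n}\|_{2^{*}}^{2^{*}}\big)^{2/2^{*}}$, and passing to the limit, $Nc_{\mu}\ge S\big(Nc_{\mu}/\mu\big)^{2/2^{*}}$. Since $c_{\mu}\ge E_{*}>0$ by \eqref{eq:positive_c_mu}, I may divide by $\big(Nc_{\mu}\big)^{2/2^{*}}$ and raise to the power $N/2$ (using $1-\tfrac{2}{2^{*}}=\tfrac2N$ and $\tfrac{N}{2^{*}}=\tfrac{N-2}{2}$), obtaining $c_{\mu}\ge\frac1N S^{N/2}\mu^{1-N/2}$. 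This contradicts $c_{\mu}<\frac1N S^{N/2}\mu^{1-N/2}$ from Lemma \ref{upbound_moun}, so $u^{0}\not\equiv0$, as claimed.

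\textbf{Main obstacle.} All the genuine difficulty is concentrated in the sharp mountain-pass estimate of Lemma \ref{upbound_moun}, which is already established; given it, the argument above is just a bookkeeping of the energy/Pohozaev/Sobolev identities, the structural point being that \eqref{Pohozaev} is free of $\lambda$ and therefore lets us bypass the sign of the Lagrange multiplier.
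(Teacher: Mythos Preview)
Your proof is correct and, in fact, more streamlined than the paper's. Both arguments start the same way: assume $u^{0}\equiv0$, use Lemma \ref{lem:VW_loss} to kill the potential terms, and exploit the asymptotic Pohozaev identity \eqref{Pohozaev} (which is $\lambda$-free). From that point, however, the two diverge. The paper first combines the Pohozaev identity with the equation tested against $u_{n}$ to deduce $\lambda_{n}\to0$, and then invokes the concentration-compactness Lemma \ref{Concom} to extract a nontrivial profile $u^{0,1}$ after rescaling; a case analysis on the dilation parameter $\alpha_{n}$ eventually yields $E_{\mu,\infty}(u_{n})\ge\frac1N S^{N/2}\mu^{1-N/2}+o(1)$, contradicting Lemma \ref{upbound_moun}. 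You bypass all of this: once the potential terms vanish, you read off directly from \eqref{Pohozaev} and the energy expression that $\|\nabla v_{n}\|_{2}^{2}\to Nc_{\mu}$ and $\mu\|v_{n}\|_{2^{*}}^{2^{*}}\to Nc_{\mu}$, then apply the Sobolev inequality pointwise along the sequence and pass to the limit. This gives the same lower bound on $c_{\mu}$ with no profile decomposition and no case splitting. The paper's route has the virtue of giving more structural information (it identifies where the mass of $u_{n}$ goes), but for the bare statement of the lemma your argument is shorter and entirely elementary.
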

\begin{proof}
We assume by contradiction that $u^{0}\equiv 0$ and we argue as in Lemma \ref{cor:split}. 

By Lemma \ref{lem:VW_loss} and Remark \ref{rmk:vanishing_V}, we have that, under the contradiction assumption, $(u_{n})_{n}$ is a Palais-Smale sequence for $E_{\mu,\infty}$, as defined in \eqref{func_without_V}. Thus, $(u_{n})_{n}$ satisfies
\begin{equation}\label{eq:lossV}
  \|\nabla u_{n}\|^{2}_{2}=\lambda_{n}+\mu \|u_{n}\|^{2^{*}}_{2^{*}}+o(1).
\end{equation}
By the Pohozaev identity and Lemma \ref{lem:VW_loss}, we have
\begin{equation*}
\lambda_n = \frac{2-N}{2}\int_{\mathbb{R}^{N}}V(x)u_{n}^{2}dx-\int_{\mathbb{R}^{N}} V(x)v_{n} \nabla u_{n} \cdot x\,dx\rightarrow 0 = \lambda^{0},
\end{equation*}
so that equation \eqref{eq:lossV} yields
\[
E_{\mu}(u_{n}) = \frac{\mu}{N}\|u_{n}\|^{2^{*}}_{2^{*}}+o(1).
\]
Since $E_{\mu}(u_{n}) \to c_\mu > 0$ by \eqref{eq:positive_c_mu}, there exists a suitable constant $d_2>0$, such that
\begin{equation*}\label{crtical_itemMP}
  \|u_{n}\|_{2^{*}}\geq d_{2}>0.
\end{equation*}
By Lemma \ref{Concom} there exist sequences $(\alpha_{n})_n$, $(x_{n})_n$, and a non-trivial $u^{0,1}\in H^1(\R^N)$ such that
\begin{equation*}\label{seq_dilationMP}
  \tilde{u}_{n}=\alpha_{n}^{-\frac{N-2}{2}}u_{n}\left(\frac{\cdot-x_{n}}{\alpha_{n}}\right)\rightharpoonup u^{0,1}
  \qquad\text{weakly in }H^{1}(\mathbb{R}^{N}).
\end{equation*}
Then we distinguish two different cases for $\alpha_{n}$. 

\underline{Case 1}: $\alpha_{n}\rightarrow 0$. Since $\|u_{n}\|^{2}_{2}=1$ and \eqref{norm_dilation}, we know
\begin{equation*}
  \| \tilde{u}_{n}\|^{2}_{2}= \alpha_{n}^{2}\| u_{n}\|^{2}_{2} \rightarrow 0, \ \ \mbox{as} \ \ \alpha_{n}\rightarrow 0,
\end{equation*}
which means that $\tilde{u}_{n}\rightarrow 0$ strongly in $L^{2}(\mathbb{R}^{N})$. Therefore, we can deduce that $u^{0,1}=0$ a.e. in $\mathbb{R}^{N}$, which contradicts the fact that $u^{0,1}$ is not trivial.

\underline{Case 2}: $\alpha_{n}\nrightarrow 0$. We know that, for every $\varphi\in C^{\infty}_{0}(\R^N)$,
\begin{equation*}\label{before_eqMP}
  \int_{\mathbb{R}^{N}}\nabla u_{n}\nabla \varphi\, dx= \mu \int_{\mathbb{R}^{N}} |u_{n}|^{2^{*}-2}u_{n} \varphi\, dx + \lambda_{n} \int_{\mathbb{R}^{N}} u_{n}\varphi\, dx+o\left(\|\varphi\|_{H^{1}}\right), \ \ \ \mbox{as} \ n\rightarrow +\infty.
\end{equation*}
Fix $\psi(x) \in C^{\infty}_{0}(\mathbb{R}^{N})$ and define $\varphi_n(y)=\psi(\alpha_{n} y)$. Then, we have
\begin{equation*}\label{relation_varpsiMP}
  \|\varphi_{n}\|_{2}^{2}= \alpha_{n}^{-N}\|\psi\|^{2}_{2}, \ \ \ \|\nabla \varphi_{n}\|^{2}_{2}=\alpha_{n}^{2-N}\|\nabla\psi\|^{2}_{2}.
\end{equation*}
Following this, we find
\begin{equation*}
\begin{aligned}
\left| \int_{\mathbb{R}^{N}}\nabla \tilde{u}_{n} \nabla \psi\,dx -\mu  \int_{\mathbb{R}^{N}}|\tilde{u}_{n}|^{2^{*}-2}\tilde{u}_{n} \psi\,dx \right|&=\left|\alpha_{n}^{\frac{N}{2}-1} \lambda_{n}\int_{\mathbb{R}^{N}} u_{n}\varphi_n\, dx\right|+\alpha_{n}^{\frac{N}{2}-1}o\left(\|\varphi_n\|_{H^{1}}\right)
\\&\leq  \alpha_{n}^{\frac{N}{2}-1}\lambda_{n}\| u_{n}\|_{2}\|\varphi_n\|_{2}+\alpha_{n}^{\frac{N}{2}-1}o\left(\|\varphi_n\|_{H^{1}}\right)\\
 &\leq C\lambda_{n}\alpha_{n}^{-1}+o\left(\|\nabla \psi \|_{2} + \alpha_n^{-1}
  \|\psi \|_{2}\right) \rightarrow 0,
 \end{aligned}
\end{equation*}
as $\frac{\lambda_{n}}{\alpha_{n}}\rightarrow 0$, by the H\"{o}lder inequality and $\| u_{n}\|_{2}=1$. 
Thus, we obtain that $\tilde{u}_{n}$ satisfies
\begin{equation*}\label{eq_splitMP}
  \int_{\mathbb{R}^{N}}|\nabla \tilde{u}_{n}|^{2}\,dx -\mu  \int_{\mathbb{R}^{N}}|\tilde{u}_{n}|^{2^{*}}\,dx=o(1).
\end{equation*}
We have
\begin{equation*}
  E_{\mu,\infty}(\tilde{u}_{n})=\left(\frac{1}{2}-\frac{1}{2^{*}}\right)\|\nabla \tilde{u}_{n}\|_{2}^{2}+o(1)=\frac{1}{N}\|\nabla \tilde{u}_{n}\|_{2}^{2}+o(1).
\end{equation*}
Then
\begin{equation*}
 E_{\mu,\infty}(u_{n})= E_{\mu,\infty}(\tilde{u}_{n})=\frac{1}{N}\|\nabla \tilde{u}_{n}\|_{2}^{2}+o(1)\geq \frac{1}{N}\|\nabla u^{0,1}\|_{2}^{2}+o(1) \geq \frac{1}{N}S^{\frac{N}{2}}\mu^{1-\frac{N}{2}}+o(1),
\end{equation*}
which, together with 
\begin{equation*}
E_\mu(u_n) \to c_{\mu}< m_{\mu}+\frac{1}{N}S^{\frac{N}{2}}\mu^{1-\frac{N}{2}} < \frac{1}{N}S^{\frac{N}{2}}\mu^{1-\frac{N}{2}} ,
\end{equation*}
gives a contradiction.
\end{proof}

To conclude the proof of the existence of a normalized mountain-pass solution, we have to show that $c_{\mu}$ is achieved. To begin with, by Lemma \ref{lem:max_Princ} and \ref{lemma:MPlambdaN}, we obtain that $\lambda_{n} \to \lambda^{0} <0$. Moreover, we know that the Palais-Smale sequence 
$(u_{n})_{n}$ satisfies $u_{n}\rightharpoonup u^{0}$ weakly in $H^{1}(\R^{N})$, where $u^0$ is a 
nonnegative, nontrivial solution of
\[
\begin{cases}
-\Delta u +V(x)u = \mu |u|^{2^{*}-2}u + \lambda u & \text{in }\mathbb{R}^{N},\smallskip\\
\int_{\mathbb{R}^{N}} u^2\,dx = a^2\le 1.
\end{cases}
\]
Assume by contradiction that $(u_n)_n$ does not converge strongly. By Lemma \ref{cor:split}, we have 
that case ii) holds, which yields that
\begin{equation*}
  c_{\mu} \geq E_{\mu}(u^{0})+\frac{1}{N}S^{N/2}\mu^{1-\frac{N}{2}}.
\end{equation*}
On the other hand, by Lemma \ref{upbound_moun},  we have
\begin{equation*}
  c_{\mu} < m_{\mu}+\frac{1}{N}S^{N/2}\mu^{1-\frac{N}{2}},
\end{equation*}
a contradiction with Corollary \ref{coro:GS}. Therefore, we obtain $u_{n}\rightarrow u^{0}$ in $H^{1}(\R^{N})$. Taking into account the change of variables in \eqref{trans},  this completes the proof of the theorem.

\begin{remark}\label{rmk:endofpaper}
With a closer look at the proof of Lemma \ref{upbound_moun}, one can see that assumption 
\eqref{eq:final_ass} on the potential $V$ can be slightly weakened. Indeed, instead of requiring 
$V \in L^{\infty}(B_{2R})$, for some $R>0$, it is enough to take 
$V \in L^{r}(B_{2R})$ where
\begin{equation}\label{special_Assum}
\begin{cases}
  r>2, & \mbox{if } N=3 \\
  r>4, & \mbox{if } N=4 \\
  r>10, & \mbox{if } N=5.
\end{cases}
\end{equation}
If we directly calculate \eqref{eq:est_pot} through the H\"{o}lder inequality and \eqref{est:Up}, we can obtain the results in \eqref{eq:est_pot}, under the assumption \eqref{special_Assum}.
\end{remark}
We conclude with the proof of our application to ergodic Mean Field Games with quadratic cost.
\begin{proof}[Proof of Theorem \ref{thm:MFG}]
Let us substitute the Hopf-Cole transform 
\[
v^2(x)=m(x) = A e^{-u(x)},
\]
where $A$ is a normalization constant, into system \eqref{e:sys}. We obtain that the second equation 
in \eqref{e:sys} is automatically satisfied, while the other conditions rewrite as 
\[
\begin{cases}
\displaystyle -\Delta v +V(x) v= \frac{\alpha}{2} v^{2^*-1} + \frac{\lambda}{2} v \qquad & \text{in} \ \R^N \smallskip\\
\displaystyle \int_{\R^N}v^2=1,\qquad v>0,
\end{cases}
\]
that is, $(v,\frac{\lambda}{2})$ has to solve \eqref{eq:main1}, with $\mu=\frac{\alpha}{2}$. As a
consequence, the required results follow by Theorems \ref{Thm_neg}, \ref{thm:mountain_pass}, the change 
of variables \eqref{trans}, Remark \ref{EX_po} (with Lemma \ref{lem:VinLr}) and Remark 
\ref{rmk:regmax}. In particular, the constant $\alpha^*>0$ depends on the norms of the decompositions 
of $V$, $W$, according to Lemma \ref{lem:VinLr}.
\end{proof}

\textbf{Acknowledgments.} Work partially supported by: PRIN-20227HX33Z ``Pattern formation in nonlinear 
phenomena'' - funded  by the European Union-Next Generation EU, Miss. 4-Comp. 1-CUP D53D23005690006; 
the Portuguese government through FCT/Portugal under the project PTDC/MAT-PUR/1788/2020; 
the MUR grant Dipartimento di Eccellenza 2023-2027; 
the INdAM-GNAMPA group.\bigskip

\textbf{Data Availability.} Data sharing not applicable to this article as no datasets were generated or analyzed during the current study.

\bigskip

\textbf{Disclosure statement.} The authors report there are no competing interests to declare.

\bibliography{normalized}{}
\bibliographystyle{abbrv}
\medskip
\small

\begin{flushright}
{\tt gianmaria.verzini@polimi.it}\\
{\tt junwei.yu@polimi.it}\\
Dipartimento di Matematica, Politecnico di Milano\\
piazza Leonardo da Vinci 32, 20133 Milano, Italy.
\end{flushright}

\end{document}